\def\R{\mathbb{R}}
\def\E{\mathbb{E}}
\def\supp{\mathrm{supp}}
\def\Pr{\mathrm{Prob}}
\def\sign{\mathrm{sign}}
\def\ker{\mathrm{ker}}
\def\tr{\mathrm{tr}}
\def\im{\mathrm{im}}
\def\NN{\mathcal{N}}
\def\K{\mathcal{K}}
\def\<{\langle}
\def\>{\rangle}
\numberwithin{equation}{section}
\theoremstyle{plain}
\newtheorem{thm}{Theorem}[section]
\newtheorem{cor}{Corollary}[section]
\newtheorem{lem}{Lemma}[section]
\newtheorem{rem}{Remark}[section]
\begin{document}

\begin{frontmatter}
\title{Sparse Recovery via Differential Inclusions}
\runtitle{Sparse Recovery via Differential Inclusions}

\begin{aug}
\author{\fnms{Stanley} \snm{Osher}\thanksref{m1}\ead[label=e1]{sjo@math.ucla.edu}},
\author{\fnms{Feng} \snm{Ruan}\thanksref{m2,m3}\ead[label=e2]{fengruan@stanford.edu}},
\author{\fnms{Jiechao} \snm{Xiong}\thanksref{m2}\ead[label=e3]{xiongjiechao@pku.edu.cn}},
\author{\fnms{Yuan} \snm{Yao}\thanksref{m2}\ead[label=e4]{yuany@math.pku.edu.cn}}
\and
\author{\fnms{Wotao} \snm{Yin}\thanksref{m1}
\ead[label=e5]{wotaoyin@math.ucla.edu}
\ead[label=u1,url]{http://www.math.pku.edu.cn/teachers/yaoy/}}

\runauthor{Osher, Ruan, Xiong, Yao and Yin}

\affiliation{University of California, Los Angeles\thanksmark{m1}, Peking University\thanksmark{m2}, and Stanford University\thanksmark{m3}}

\address{School of Mathematical Sciences\\
Peking University\\
Beijing, China 100871 \\
\printead{e2}\\
\phantom{E-mail:\ }\printead*{e3}\\
\phantom{E-mail:\ }\printead*{e4}\\
\printead{u1}}

\address{Department of Mathematics\\
University of California\\
Los Angels, CA 90095\\
\printead{e1}\\
\phantom{E-mail:\ }\printead*{e5}}
\end{aug}

\begin{abstract}
In this paper, we recover sparse signals from their noisy linear measurements by solving nonlinear differential inclusions, which is based on the notion of inverse scale space (ISS) developed in applied mathematics. Our goal here is to bring this idea to address a challenging problem in statistics, \emph{i.e.} finding the oracle estimator which is unbiased and sign-consistent using dynamics. We call our dynamics \emph{Bregman ISS} and \emph{Linearized Bregman ISS}. A well-known shortcoming of LASSO and any convex regularization approaches lies in the bias of estimators. However, we show that under proper conditions, there exists a bias-free and sign-consistent point on the solution paths of such dynamics, which corresponds to a signal that is the unbiased estimate of the true signal and whose entries have the same signs as those of the true signs, \emph{i.e.} the oracle estimator. Therefore, their solution paths are regularization paths better than the LASSO regularization path, since the points on the latter path are biased when sign-consistency is reached. We also show how to efficiently compute their solution paths in both continuous and discretized settings: the full solution paths can be exactly computed piece by piece, and a discretization leads to \emph{Linearized Bregman iteration}, which is a simple iterative thresholding rule and easy to parallelize. Theoretical guarantees such as sign-consistency and minimax optimal $l_2$-error bounds are established in both continuous and discrete settings for specific points on the paths. Early-stopping rules for identifying these points are given. The key treatment relies on the development of differential inequalities for differential inclusions and their discretizations, which extends the previous results and leads to exponentially fast recovering of sparse signals before selecting wrong ones.

\end{abstract}


\begin{keyword}
\kwd{Linearized Bregman, Differential Inclusion, Early Stopping Regularization, Statistical Consistency}
\end{keyword}
\footnote{Corresponding email: {\tt yuany@math.pku.edu.cn}, phone: 310-825-1758, fax: 310-206-6673 (c/o Stan Osher).}

\end{frontmatter}

\section{Introduction}
We study a dynamic approach to recover a sparse signal $\beta^*\in \R^p$ from its noisy linear measurements
\begin{equation}\label{yxbe}
y = X \beta^* + \epsilon.
\end{equation}
Here, $y\in \R^n$ is a measurement vector, $X=[x_1,\ldots,x_p]\in \R^{n\times p}$ is a measurement matrix, and $\epsilon\sim \NN(0,\sigma^2 I_n)$ is Gaussian noise. 
We allow $n< p$ and assume that $\beta^*$ has  $s\le \min\{n,p\}$ nonzero components. For convenience,
let $S=\supp(\beta^*)$ and $T$ be its complement, i.e. $T=\{i:\beta_i^*=0\}$. $X_S$ denotes the submatrix of $X$ formed by the columns of $X$ in $S$, which are assumed to be \emph{linearly independent}. Similarly define $X_T$ so that $[X_S ~X_T]=X$.

Such a problem has been widely studied in applied mathematics \cite{bpdn}, engineering, and statistics \cite{lasso}, see for example surveys in \cite{Donoho06,CanWak08}. In these works, convex regularization or relaxation approach has been exploited to overcome the combinatorial explosion of searching the best sparse signals using subset least squares. However, it has been known since \cite{FanLi01} that all convex regularization approaches lead to biased estimators whose expectation does not meet the true signal, which motivates the exploration of using nonconvex regularization yet it may suffer from a computational hurdle of locating the global optima \cite{GJY11,GWYY15}. 

To address this dilemma between statistical accuracy and computational hurdle, in this paper we introduce some dynamics from the \emph{Inverse Scale Space} (ISS) method, which first appeared in the image restoration literature in \cite{burger2005nonlinear, BGOX06, BRH07, BFOS07,Burger08} and analyzed and implemented carefully in \cite{burger2013adaptive}. The name refers to the observation there that large-scale (image) features are recovered before small-scale ones. Our goal here is to show that such dynamics provides a surprisingly simple way to statistically accurate (unbiased and sign-consistent) estimator if equipped with a new type regularization -- early stopping. Our results also extend those early error analysis on ISS to statistical consistency, establishing model selection consistency as well as minimax optimal $l_2$ error bounds under comparable conditions to LASSO, \emph{etc}.

The first one, called \emph{Bregman ISS} here, is given by the nonlinear differential inclusions:
\begin{subequations}\label{eq:Bregman ISS}
\begin{align}
 \dot\rho_t&=\frac{1}{n} X^T(y-X\beta_t),\label{eq:Bregman ISSa}\\
 \rho_t &\in \partial\|\beta_t\|_1, \label{eq:Bregman ISSb}
\end{align}
\end{subequations}
where $t\ge 0$ is time, $\rho_t\in\R^p$ is assumed to be right continuously differentiable in $t$, $\dot\rho_t$ is the right derivative of $\rho_t$, and $\beta_t$ is assumed to be right continuous. The inclusion condition \eqref{eq:Bregman ISSb} restricts $\rho_t$ to a subgradient of $\ell_1$-norm  at $\beta_t$,  $t\ge 0$. The initial conditions are, typically,  $\rho_0=0$ and $\beta_0=0$. As it evolves, the component which reaches $|\rho_t(i)|=1$ enters into our selection $\beta_t(i)\neq 0$. Hence roughly speaking, the larger magnitude $X^T_i (y-X\beta_t)$ has, the faster the component is selected. In the ideal case, we hope the signals in $S$ are selected faster than non-signals in $T$, whose conditions will be our main concern in this paper. Under general conditions, we will see that a solution to \eqref{eq:Bregman ISS} exists and both $\rho_t$ and $X\beta_t$, $t\ge 0$, are unique. In addition, $\rho_t$ is piece-wise linear, and there exists a solution path $\beta_t$ that is piece-wise constant. The entire path can be computed at finitely many break points.

A damping version of the first one, called \emph{Linearized Bregman ISS}, has its solution path $\{\rho_t,\beta_t\}_{t\ge 0}$ governed by the nonlinear differential inclusions:
\begin{subequations}\label{eq:lb-iss}
\begin{align}
 \dot\rho_t + \frac{1}{\kappa}  \dot\beta_t &=\frac{1}{n} X^T(y-X\beta_t),\label{eq:lb-issa}\\
 \rho_t &\in \partial\|\beta_t\|_1, \label{eq:lb-issb}
\end{align}
\end{subequations}
where $\kappa>0$ is a constant.
Compared to  \eqref{eq:Bregman ISSa}, equation \eqref{eq:lb-issa} has the additional term $\frac{1}{\kappa}  \dot\beta$. As $\kappa\to \infty$, \eqref{eq:lb-iss} is reduced to \eqref{eq:Bregman ISS}, and the solution path of (\ref{eq:lb-iss}) may converge to that of (\ref{eq:Bregman ISS}) exponentially fast as $\kappa$ increases. We will see that  \eqref{eq:lb-iss} has a unique solution path $\rho_t$ and $\beta_t$, $t\ge 0$, which are both continuous for all $\kappa>0$. Alternatively, \eqref{eq:lb-iss} can be obtained as a differential inclusion replacing the $l_1$-norm in \eqref{eq:Bregman ISSb} by the Elastic Net \cite{ZH05} penalty $\|\beta_t\|_1+\frac{1}{\kappa}\|\beta_t\|_2^2$ which will be discussed later. 

The discretizations of  \eqref{eq:Bregman ISS} and \eqref{eq:lb-iss} are known as Bregman Iteration (equation (3.7) of \cite{YODG08}) and Linearized Bregman Iteration (equations (5.19-20) of \cite{YODG08}), respectively. They were introduced in the literature of variational imaging and compressive sensing before  \eqref{eq:Bregman ISS} and \eqref{eq:lb-iss}. Through a change of variable, Bregman Iteration  becomes the iteration of the Augmented Lagrangian Method \cite{hestenes1969multiplier,powell1967method}. On the other hand, Linearized Bregman Iteration is a simple two-line iteration:
\begin{subequations}\label{eq:lb}
\begin{align}
\rho_{k+1} + \frac{1}{\kappa} \beta_{k+1} & =  \rho_{k} + \frac{1}{\kappa} \beta_{k} + \frac{\alpha_k}{n} X^T(y-X\beta_k),\\
\rho_k &\in \partial \|\beta_k\|_1,
\end{align}
\end{subequations}
which is evidently a forward Euler discretization to \eqref{eq:lb-iss}, where $\alpha_k>0$ is a step size.
Define $z_{k}=\rho_{k}+\frac{1}{\kappa} \beta_{k}$. Then \eqref{eq:lb} can be simplified to:\begin{subequations}\label{eq:lb2}
\begin{align}
z_{k+1}&= z_k +\frac{\alpha_k}{n} X^T(y-X\beta_k) \\
\beta_{k+1}&= \kappa \cdot \mathrm{shrink}(z_{k+1},1),\label{lb2b}
\end{align}
\end{subequations}
where the mapping $\mathrm{shrink}$ is defined component-wise  as
$$\mathrm{shrink}(z,\lambda):=\sign(z)\max\{|z|-\lambda,0\},\quad z,\lambda\in\R,~\lambda \ge0.$$
Note that $\mathrm{shrink}(z,\lambda)$ is the unique solution to the convex program:
$$\min_{x\in\R} |x|+\frac{1}{2\lambda}(x-z)^2, $$
which is called \emph{LASSO} in statistics literature \cite{lasso}.

\subsection{Motivations and contributions} \label{sec:motivation}
Our exposition is motivated by the fact that solution path $\{\beta_t\}_{t\ge 0}$ of the differential inclusion \eqref{eq:Bregman ISS} and the sequence  $\{\beta_k\}_{k\ge 0}$ of \eqref{eq:lb} are \emph{better} than the points on the LASSO regularization path. In particular, while LASSO regularization path is always biased, $\beta_t$ can be unbiased when the correct set of variables is reached. Here an estimator $\hat{\beta}$ is called \emph{biased} if its expectation does not equal to the true parameter $\beta^*$, i.e. $\E[\hat{\beta}]- \beta^*\neq 0$; otherwise it is called \emph{unbiased}. In fact, \cite{OBGXY05} observed in experiments that Bregman iterations may reduce bias in the context of Total Variation image denoising, analogous to the $l_1$ setting to be studied below.

To see this, consider the general LASSO problem \cite{lasso},
\begin{equation}\label{lass}
\min_\beta\lambda\|\beta\|_1+\frac{1}{2n}\|y-X\beta\|_2^2,
\end{equation}
where for the convenience of comparison we replace the regularization parameter $\lambda$ by $t=1/\lambda$ in the following equivalent form
\begin{equation}\label{lasso}
\min_\beta \|\beta\|_1+\frac{t}{2n}\| y - X\beta \|_2^2.
\end{equation}
Aside from the obvious relation $t=1/\lambda$, solution $\beta$ is piece-wise linear in $\lambda$ \cite{lars} though not  so in $t$. Despite this, $t$ will be convenient to our analysis by reflecting a nature of time evolution of the solution.

Since \eqref{lasso} is a convex program, $\hat{\beta}_t$ is a solution to \eqref{lasso} if and only if it obeys the first-order optimality conditions
\begin{subequations}\label{eq:lasso-kkt}
\begin{align}
\frac{\hat{\rho}_t}{t}&=\frac{1}{n} X^T(y-X{\hat{\beta}}_t),\label{eq:lasso-kkta}\\
\hat{\rho}_t &\in \partial\|\hat{\beta}_t\|_1,
\end{align}
\end{subequations}
which are obtained by taking the subdifferential of the objective in \eqref{lasso}.

It is well-known that LASSO solution $\hat{\beta}_t$ is biased \cite{FanLi01}. For example, considering the simple case that $n=p=1$, $X$ is the identity and $y\ge0$, then \eqref{eq:lasso-kkt} yields
\begin{equation}\label{eq:lasso1}
\hat{\beta}_t=\left\{
\begin{array}{lr}
0,& \mbox{if $t<1/y$}; \\
y-1/t,& \mbox{otherwise},
\end{array}
\right.
\end{equation}
while \eqref{eq:Bregman ISS} has the solution
\begin{equation}
\beta_t=\left\{
\begin{array}{lr}
0,& \mbox{if $t<1/y$}; \\
y,& \mbox{otherwise},
\end{array}
\right.
\end{equation}
which is unbiased for $t \ge 1/y$ as $\E[\beta_t]=\E[y]=\beta^*$.

Moreover, the Linearized Bregman ISS \eqref{eq:lb-iss} has the solution,
\begin{equation}
\beta_t=\left\{
\begin{array}{lr}
0,& \mbox{if $t<1/y$}; \\
y(1 - e^{-\kappa(t-1/y)}),& \mbox{otherwise},
\end{array}
\right.
\end{equation}
which  converges to the unbiased Bregman ISS estimator exponentially fast.

Let us discuss this phenomenon in the general setting. First, let the \emph{oracle estimator} be the subset least-squares solution $\tilde{\beta}^*$ given the \emph{true} set of variables $S$ by an oracle, whose nonzero entries are given by
\begin{equation} \label{eq:oracle}
\tilde{\beta}_S^* = \left(\frac{1}{n} X_S^T X_S\right)^{-1} \frac{1}{n} X_S^T y = \beta^*_S +   \left(\frac{1}{n} X_S^T X_S\right)^{-1} \frac{1}{n} X_S^T \epsilon.
\end{equation}
Clearly $\tilde{\beta}^*_S \sim \NN(\beta^*_S, \Sigma_n)$ where
$\Sigma_n=\frac{\sigma^2}{n}\left(\frac{1}{n} X_S^T X_S\right)^{-1}$. Since in expectation with respect to noise, $\E[\tilde{\beta}^*] = \beta^*$, $\tilde{\beta}^*$ is an unbiased estimate of $\beta^*$.

In reality we are not given the support set $S$, so the following two properties are used to evaluate the performance of an estimator $\hat{\beta}$.
\begin{enumerate}
\item \textbf{Model selection consistency}: $\supp(\hat{\beta})=S$;
\item \textbf{Asymptotic normality}: $\sqrt{n} (\hat{\beta}-\beta^\ast) \to \NN(0, \Sigma^*)$, where
$$\Sigma^*=\lim_{n\to \infty}n \Sigma_n =\sigma^2\left( \lim_{n\to\infty} \frac{1}{n} X_S^T X_S\right)^{-1}.$$
\end{enumerate}
Since these properties hold for the oracle estimator, they are often referred to as the \emph{oracle properties}.

A solution mapping $\hat{\beta}_t: [0,\infty) \to \R^p$ gives a \emph{regularization path}. Model selection consistency, also known as path consistency, refers to the existence of a point  $\hat{\beta}_{\tau}$ on this path that selects the correct variables, namely, $\supp(\hat{\beta}_{\tau}) = S$. Path consistency has been obtained for LASSO by establishing the stronger property of \emph{sign consistency}, that is, $\sign(\hat{\beta}_{\tau})=\sign(\beta^\ast)$, under certain conditions such as those in \cite{ZhaYu06,Zou06,YuaLin07,Wainwright09}. Provided that path consistency is reached at $\tau$, the LASSO estimate $\hat{\beta}_\tau $ is nonetheless biased since
\begin{equation} \label{eq:lasso-est}
\hat{\beta}_{\tau,S} =  \left(\frac{1}{n} X_S^T X_S\right)^{-1} \frac{1}{n} X_S^T y  - \left(\frac{1}{n} X_S^T X_S\right)^{-1} \frac{\hat{\rho}_{\tau}}{\tau},
\end{equation}
where $\rho_\tau=\sign(\hat{\beta}_{\tau}) \in \partial \|\hat{\beta}_\tau\|_1$.
The first-term on the right-hand side equals the oracle estimator $\tilde{\beta}_S^*$, which is unbiased, whereas the second-term never vanishes and is the bias. Hence, the oracle properties are never completely met by LASSO.

The bias can be removed by a simple differentiation of LASSO solution.
To see this, by multiplying $t$ on both sides of \eqref{eq:lasso-kkta} and differentiating it with respect to $t$, any point on the LASSO path satisfies
\begin{equation}\label{eq:110sat} \dot{\hat{\rho}}_t = \frac{1}{n} X^T(y-X(\hat{\beta}_t+t \dot{\hat{\beta}}_t)).
\end{equation}
With path consistency assumed at time $t=\tau$, we have $\beta_{\tau,i}=0,~\forall~i\not\in S$, and from \eqref{eq:110sat} we have
\begin{equation}\label{eq:110sat2} \dot{\hat{\rho}}_{\tau,S} = \frac{1}{n} X_{S}^T(y-X_{S}(\hat{\beta}_{\tau,S}+\tau \dot{\hat{\beta}}_{\tau,S})).
\end{equation}
Generically, sign consistency occurs in a neighborhood and thus $\dot{\rho}_{\tau,S}=0$. Therefore,
\[ \hat{\beta}_{\tau,S} + \tau  \dot{\hat{\beta}}_{\tau,S} =  \left(\frac{1}{n} X_S^T X_S\right)^{-1} \frac{1}{n} X_S^T y =\tilde{\beta}^*_S,\]
which is the oracle estimator without bias! \textbf{This motivates us to replace $(\hat{\beta}_t+t \dot{\hat{\beta}}_t)$ in \eqref{eq:110sat} by just $\beta_t$, which gives the differential inclusions \eqref{eq:Bregman ISSa} of Bregman ISS.} Later we will show that the resulting $\beta_t$ in \eqref{eq:Bregman ISS} indeed reaches sign-consistency under nearly the same condition as LASSO and hence gives the unbiased oracle estimator.

Compared to LASSO, our dynamic approach has great advantages in algorithmic simplicity and estimate quality. In practice, while LASSO is solved for a sequence of regularization parameters (\emph{i.e.} regularization path), with or without extra debiasing steps such as subset least squares, a single run of our algorithms gives the entire path or, in the case of (discrete) linearized Bregman, a (discrete) approximate path. In addition, the linearized Bregman algorithms such as the simple iterative scheme in \eqref{eq:lb2} are readily parallelizable. Such regularization paths can be unbiased, or, in the case of (discrete) linearized Bregman, have less bias than LASSO paths. Generally speaking, our algorithms return regularization paths of improved quality at just a fraction of cost by LASSO.

In addition, \cite{FanLi01} points out that it is \emph{impossible} to achieve \emph{unbiased} estimator with \emph{convex regularization}. To avoid bias in regularized least square problem, it is thus necessary to introduce non-convex penalties (e.g. SCAD \emph{etc.}) which however suffers from the computational difficulty (typically NP-hardness \cite{GJY11,GWYY15}) on locating the global optima. In a contrast, the dynamic approach studied in this paper, without optimizing any objective function, will be seen to play the same role as non-convex regularization but using a new regularization -- early stopping. Such a debiasing ability naturally inherits from the dynamic solution paths, without suffering the cost of finding global optima in nonconvex optimization. 

Therefore, in addition to giving the basic solution properties such as existence, uniqueness, and (dis)continuity, we also attempt to explain the good behaviors of the new solution paths and sequence by establishing their \emph{statistical path consistency} property. Basically we argue that
\begin{enumerate}
\item Under nearly the same conditions for LASSO \cite{ZhaYu06,Zou06,YuaLin07,Wainwright09}  that the covariates $x_i$ are sufficiently uncorrelated and the signal $\beta_S^*$ is strong enough, Bregman ISS \eqref{eq:Bregman ISS} with a proper early stopping rule will return the \emph{oracle estimator};
\item Sign consistency and $l_2$-error bounds of minimax rates can be generalized to the Linearized Bregman iteration \eqref{eq:lb} and its limit dynamics \eqref{eq:lb-iss}, under similar conditions.
\end{enumerate}

\subsection{Notation and assumptions}

Define $\<u,v\> = u^T v$ and $\<u,v\>_n = \frac{1}{n} u^T v$ for $u,v\in \R^n$. Hence $\|u\|_n = \frac{1}{\sqrt{n}}\|u\|$. Let $X^\ast = \frac{1}{n} X^T$ be the adjoint operator of $X$ with respect to inner product $\<\cdot,\cdot\>_n$.
Let the largest and the smallest nonzero magnitudes of $\beta^\ast$ be $\beta^*_{\max}:=\max (|\beta^*_i| : i\in S)$ and $\beta^*_{\min}:=\min (|\beta^*_i| : i\in S)$, respectively.
Similarly define $\tilde{\beta}_{\max}$ and $\tilde{\beta}_{\min}$ for the oracle estimator $\tilde{\beta}^*$ in (\ref{eq:oracle}).
The dependence of $\rho_t$ and $\beta_t$ (or equivalently $\rho(t)$ and $\beta(t)$) on $t$ is omitted where it is clear from the context. For the reason to be discussed in Section \ref{sec:unique}, we shall assume that $\rho_t$ is right continuously differentiable and $\beta_t$ is right continuous.

Throughout the paper, given two numbers $a$ and $b$, let $a\vee b:= \max(a,b)$.

\subsection{Outline} In the rest of this paper, we establish basic solution properties of Bregman and Linearized Bregman ISS in Section \ref{sec:unique}. Section \ref{sec:consistencyBISS} and Section \ref{sec:general} describe statistical consistency properties of Bregman ISS and their generalizations to Linearized Bregman ISS/discretization, respectively. Section \ref{sec:analysis} is dedicated to the ideas of proofs. Section \ref{sec:adaptive} presents some preliminary data-dependent stopping rules and Section \ref{sec:related} collects some comments on related works. Section \ref{sec:experiment} provides some preliminary numerical results. Conclusions are summarized in Section \ref{sec:conclusion}.

\section{Bregman and Linearized Bregman solution paths} \label{sec:unique}
 It has been pointed out in \cite{burger2013adaptive} that the solution to Bregman ISS \eqref{eq:Bregman ISS} is a piece-wise regularization path given iteratively by the following nonnegative least squares, starting with $k=0$, $t_0=0$, and $\rho_0=\beta_0=0$:
\begin{enumerate}
\item set $t_{k+1}:=\sup\{t>t_k:\rho_{t_k}+\frac{t-t_k}{n} X^T(y-X\beta_{t_k})\in \partial\|\beta_{t_k}\|_1\}$; if $t_{k+1}=\infty$, then \emph{exit};
\item set $\rho_{t_{k+1}}:=\rho_{t_k}+\frac{t_{k+1}-t_k}{n} X^T(y-X\beta_{t_k})$;
\item set $S_{k+1}:=\{i:|(\rho_{t_{k+1}})_i|= 1\}$ and $T_{k+1}=\{1,\ldots,p\}\setminus S_{k+1}$;
\item set $\beta_{t_{k+1}}$ as any solution to
\begin{equation}\label{iss-sub}
\begin{array}{rrl}
\min_\beta &\|y-X\beta\|_2^2&\\
\mbox{subject to}~&(\rho_{t_{k+1}})_i \beta_i\ge 0&\quad\forall~i\in S_{k+1},\\
&\beta_j=0&\quad\forall~j\in T_{k+1}.
\end{array}
\end{equation}
\item set $k=k+1$ and go  to Step 1.
\end{enumerate}
Hence the solution path to \eqref{eq:Bregman ISS} is established by
\begin{equation} \label{eq:rho_t}
\begin{cases}\rho_t = \rho_{t_k}+\frac{t-t_k}{t_{k+1}-t_k}\rho_{t_{k+1}},\\ \beta_t=\beta_{t_k},\end{cases}\quad t\in[t_k,t_{k+1}).
\end{equation}
where $\rho_t$ is piece-wise linear and $\beta_t$ is piece-wise constant. As the nonnegative least square in \eqref{iss-sub} is a convex quadratic programming with linear constraints, the solution always exists but may not be unique, especially in high dimensional setting $n<p$ which fails the strong convexity in least square. The following theorem presents some general conditions to ensure both the existence and uniqueness of solution path.

\begin{thm}[Solution existence and uniqueness for Bregman ISS] \label{thm:ISSunique}Let $\rho_t$ be right continuously differentiable and $\beta_t$ be right continuous. Then, a solution to \eqref{eq:Bregman ISS} is given by $(\beta_t,\rho_t)$ generated by the above algorithm. Solution $\rho_t$ and $X\beta_t$ are unique. In addition, if the columns $x_i$ of $X$ for $i\in \supp(\beta_t)$ are linearly independent for $t\ge 0$, then $\beta_t$ is also unique.
\end{thm}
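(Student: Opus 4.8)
The plan is to treat the three assertions in turn: existence via the stated construction, uniqueness of $\rho_t$ and $X\beta_t$ via monotonicity of the subdifferential, and uniqueness of $\beta_t$ under the rank hypothesis. For existence I would simply verify that the path $(\beta_t,\rho_t)$ assembled in \eqref{eq:rho_t} solves \eqref{eq:Bregman ISS} piece by piece. On each interval $[t_k,t_{k+1})$ the construction sets $\beta_t=\beta_{t_k}$ and makes $\rho_t=\rho_{t_k}+\frac{t-t_k}{n}X^T(y-X\beta_{t_k})$ a line segment, so $\dot\rho_t=\frac1n X^T(y-X\beta_t)$, which is \eqref{eq:Bregman ISSa}. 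Since $t_{k+1}$ is defined as the last moment this segment stays in $\partial\|\beta_{t_k}\|_1$, the inclusion \eqref{eq:Bregman ISSb} holds on the interval and, by closedness of the subdifferential, at its endpoint. The only check at a breakpoint is that the updated $\beta_{t_{k+1}}$ from \eqref{iss-sub} remains compatible with $\rho_{t_{k+1}}$: the sign constraint $(\rho_{t_{k+1}})_i\beta_i\ge 0$ on the active set $S_{k+1}=\{i:|(\rho_{t_{k+1}})_i|=1\}$ together with $\beta_j=0$ off $S_{k+1}$ forces $\rho_{t_{k+1}}(i)=\sign(\beta_{t_{k+1}}(i))$ whenever $\beta_{t_{k+1}}(i)\neq0$, so $\rho_{t_{k+1}}\in\partial\|\beta_{t_{k+1}}\|_1$ and the inclusion persists into the next interval.

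The crux is uniqueness of $\rho_t$ and $X\beta_t$, which I would extract from the monotonicity of $\partial\|\cdot\|_1$. Suppose $(\rho_t,\beta_t)$ and $(\rho'_t,\beta'_t)$ are two solutions with the common initialization $\rho_0=\rho'_0=0$. Subtracting the two copies of \eqref{eq:Bregman ISSa} and integrating gives $\rho_t-\rho'_t=-\frac1n X^T W_t$, where $W_t:=\int_0^t \big(X\beta_s-X\beta'_s\big)\,ds$. Convexity of $\|\cdot\|_1$ yields $\langle \rho_s-\rho'_s,\beta_s-\beta'_s\rangle\ge 0$ pointwise, and rewriting the left side through the adjoint identity $\langle \rho_s-\rho'_s,\beta_s-\beta'_s\rangle=-\frac1n\langle W_s,\dot W_s\rangle$ shows that the Lyapunov energy $\|W_t\|^2$ is nonincreasing. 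Since $W_0=0$ and $\|W_t\|^2\ge 0$, the energy is identically zero, hence $W_t\equiv0$; differentiating and invoking right-continuity of $X\beta_t$ gives $X\beta_t=X\beta'_t$ for all $t$, and then $\rho_t-\rho'_t=-\frac1n X^T W_t=0$.

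For uniqueness of $\beta_t$ I would use that $\rho_t\in\partial\|\beta_t\|_1$ forces $\supp(\beta_t)\subseteq\{i:|\rho_t(i)|=1\}$. As $\rho_t$ has just been shown unique, every solution has its support inside the same $t$-determined active set. Hence for two solutions the difference $\beta_t-\beta'_t$ is supported there and lies in $\ker X$ by the previous step, so linear independence of the corresponding columns forces $\beta_t=\beta'_t$.

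The main obstacle I anticipate is not the monotonicity inequality but the regularity bookkeeping that makes the energy argument rigorous. One must justify that $X\beta'_t$ is locally bounded and integrable for an arbitrary, merely right-continuous solution: this follows since $X^TX\beta'_t=X^Ty-n\dot\rho'_t$ is right-continuous and $X\beta'_t\in\im X$, on which $X^T$ is injective, so $W_t$ is absolutely continuous and differentiable almost everywhere with $\dot W_t=X\beta_t-X\beta'_t$. One must also upgrade the almost-everywhere identity $X\beta_t=X\beta'_t$ to equality everywhere using right-continuity. Making these measure-theoretic steps precise is where the genuine care is needed.
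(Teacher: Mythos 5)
Your argument is correct, but for the central claim (uniqueness of $\rho_t$ and $X\beta_t$) you take a genuinely different route from the paper. The paper works pointwise in $t$: using the right-continuity assumptions it shows that any solution $\beta_t$ must satisfy the sign constraints $(\rho_t)_i\beta_i\ge 0$, the dual feasibility $(\rho_t)_i(\nabla f(\beta_t))_i\ge 0$ on the active set $\{i:|(\rho_t)_i|=1\}$, and the complementarity $(\beta_t)_i\,(\nabla f(\beta_t))_i=0$ (this last point is exactly where right-continuity is used: a nonzero $(\nabla f(\beta_t))_i$ would move $\rho_i$ off $\pm1$, forcing $\beta_i=0$). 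These are the KKT conditions of the sign-constrained least squares \eqref{iss-sub}, whose objective is strictly convex in $X\beta$, so $X\beta_t$, hence $\nabla f(\beta_t)=\dot\rho_t$, hence $\rho_t$, are unique. Your proof instead runs the classical monotone-operator energy argument on $W_t=\int_0^t X(\beta_s-\beta'_s)\,ds$. Each approach buys something: the paper's identification with \eqref{iss-sub} simultaneously certifies that the piecewise algorithm produces a genuine solution (so existence and uniqueness come from the same computation) and needs no integration of a merely right-continuous path; your argument is softer and more portable (it would work verbatim for any convex penalty in place of $\|\cdot\|_1$, with no appeal to the piecewise structure), but, as you rightly flag, it shifts the burden onto regularity bookkeeping --- you must know that $\rho_t-\rho'_t$ is recovered by integrating its right derivative and that $X\beta'_t$ is locally integrable, which the paper's pointwise argument sidesteps entirely. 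Two small points to tighten: in the existence step, besides compatibility at $t_{k+1}$ you should note that $|(\rho_{t_{k+1}})_i|=1$ already guarantees $\rho_{t_{k+1}}\in\partial\|\beta_{t_{k+1}}\|_1$ even at active coordinates where the minimizer of \eqref{iss-sub} happens to vanish; and in the last step your difference $\beta_t-\beta'_t$ is supported in the active set $\{i:|\rho_t(i)|=1\}$, which may be strictly larger than $\supp(\beta_t)$, so strictly speaking you need linear independence of the columns over that set (or over $\supp(\beta_t)\cup\supp(\beta'_t)$) --- a looseness the paper's own statement shares.
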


In high dimensional setting with $n<p$, as more and more variables are selected in $S_t=\supp(\beta_t)$, linear independence of $X_{S_t}$ will be lost at least when $|S_t|>n$ and so is the uniqueness of $\beta_t$. However, the existence and uniqueness of Linearized Bregman ISS is much simpler as shown in the following theorem.

\begin{thm}[Solution existence and uniqueness for Linearized Bregman ISS] \label{thm:LBISSunique} Let $\rho_t$ be right continuously differentiable and $\beta_t$ be right continuous. Then \eqref{eq:lb-iss} has a unique solution.
\end{thm}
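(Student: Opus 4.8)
The plan is to eliminate the subgradient inclusion through the change of variables $z_t = \rho_t + \frac{1}{\kappa}\beta_t$ already introduced ahead of \eqref{eq:lb2}, which collapses the differential inclusion \eqref{eq:lb-iss} into a single ordinary differential equation with a globally Lipschitz right-hand side; existence and uniqueness then follow from the classical Cauchy--Lipschitz (Picard--Lindel\"of) theorem.

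First I would establish the key algebraic fact that \eqref{eq:lb-issb} together with the definition of $z_t$ determines $\beta_t$ as a single-valued, Lipschitz function of $z_t$. Concretely, I claim that $\rho \in \partial\|\beta\|_1$ and $z = \rho + \frac{1}{\kappa}\beta$ hold simultaneously if and only if $\beta = \kappa\,\mathrm{shrink}(z,1)$. This is checked componentwise: if $|z_i|\le 1$ then $\beta_i = 0$ forces $\rho_i = z_i \in [-1,1] = \partial|0|$; if $z_i > 1$ then $\beta_i = \kappa(z_i-1) > 0$ gives $\rho_i = z_i - \frac{1}{\kappa}\beta_i = 1 \in \partial|\beta_i|$, and symmetrically for $z_i < -1$. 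Writing $\beta(z) := \kappa\,\mathrm{shrink}(z,1)$, the map $z\mapsto \beta(z)$ is piecewise linear and $\kappa$-Lipschitz.

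Substituting this into \eqref{eq:lb-issa} (equivalently $\dot z_t = \frac{1}{n}X^T(y - X\beta_t)$) I would obtain the autonomous ODE
\begin{equation}\label{eq:lbode}
\dot z_t = \frac{1}{n}X^T\bigl(y - X\,\beta(z_t)\bigr), \qquad z_0 = \rho_0 + \tfrac{1}{\kappa}\beta_0 = 0,
\end{equation}
whose right-hand side $F(z) := \frac{1}{n}X^T(y - X\beta(z))$ is globally Lipschitz in $z$, being the composition of the bounded linear map $-\frac{1}{n}X^TX$ with the $\kappa$-Lipschitz map $\beta(\cdot)$. The Cauchy--Lipschitz theorem then yields a unique $C^1$ solution $z_t$; moreover, since $F$ grows at most linearly in $\|z\|$, the solution does not blow up in finite time and extends to all $t\ge 0$.

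Finally I would translate back: set $\beta_t := \beta(z_t)$ and $\rho_t := z_t - \frac{1}{\kappa}\beta(z_t)$. By construction these satisfy both \eqref{eq:lb-issa} and \eqref{eq:lb-issb}; $\beta_t$ is (Lipschitz) continuous, and $\rho_t$ is right continuously differentiable because $z_t$ is $C^1$ while the piecewise-linear $\beta(z_t)$ possesses one-sided derivatives everywhere, so $\dot\rho_t = \dot z_t - \frac{1}{\kappa}\dot\beta_t$ exists as a right derivative that is right continuous in $t$. Conversely, any solution of \eqref{eq:lb-iss} in the stated regularity class produces, through $z_t$, a solution of \eqref{eq:lbode}, which is unique; hence $\rho_t$ and $\beta_t$ are unique. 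I expect the only genuine content to lie in the componentwise verification of the shrinkage identity in the first step --- the step that converts the set-valued inclusion into a single-valued Lipschitz vector field --- after which the argument is a textbook invocation of ODE theory.
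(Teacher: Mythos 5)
Your proposal is correct and follows essentially the same route as the paper: the change of variables $z_t=\rho_t+\frac{1}{\kappa}\beta_t$, the identity $\beta_t=\kappa\,\mathrm{shrink}(z_t,1)$ converting the inclusion into an ODE with globally Lipschitz right-hand side, and the Picard--Lindel\"of theorem. Your componentwise verification of the shrinkage identity and the check of global-in-time existence simply make explicit what the paper leaves to the reader.
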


The proofs of these theorems are collected in Appendix.

Provided that sign consistency is met by a point on the path at $t=\tau$, \eqref{iss-sub} returns the oracle estimator $\beta_\tau=\tilde{\beta}^*$ as it is the  least-squares problem subject to only sign constraints. Hence, natural questions are: \emph{what conditions will guarantee sign consistency? And, how to determine  $\tau$}? In the sequel, we are going to provide an answer to these questions. Throughout the remaining of this paper, we assume that $\rho_t$ is right continuously differentiable and $\beta_t$ is right continuous, so the existence and uniqueness of solution paths are guaranteed.


\section{Consistency of Bregman ISS Dynamics} \label{sec:consistencyBISS}
In this section necessary and sufficient conditions are established for noisy sparse signal recovery with Bregman ISS \eqref{eq:Bregman ISS}.

\subsection{Assumptions}
\begin{itemize}
\item[(A1)] {\bf Restricted Strong Convexity}: there is a $\gamma\in (0,1]$,
$$ X^*_S X_S \geq \gamma I.  $$
\item[(A2)] {\bf Irrepresentable Condition}: there is a $\eta \in (0,1)$,
\[ \left\|X^\ast_T X_S^{\dagger} \right \|_\infty = \left\|\frac{1}{n} X^T_T X_S \left ( \frac{1}{n} X^T_S X_S \right )^{-1} \right \|_\infty \leq 1 - \eta \]
where $X_S^{\dagger}:=X_S \left ( \frac{1}{n} X^T_S X_S \right )^{-1} $.
\end{itemize}

Condition A1 says that the Hessian matrix of the empirical risk $\frac{1}{2n}\|y-X\beta\|^2_2$ restricted on the index set $S\times S$ is strictly positive definitive, so the empirical risk is strongly convex when restricted on the support set $S$. Such a condition is necessary in the sense that once it fails, $X_S$ will be linearly dependent and no unique representation is possible under the basis $X_S$.

Condition A2 says that the absolute row sums of matrix $X^\ast_T X_S^{\dagger}$ are all less than one. It has been proposed independently under a variety of names, e.g. Exact Recovery Condition \cite{Tropp04}, Irrepresentable Condition \cite{ZhaYu06}, among \cite{YuaLin07, Zou06}. Here we adopt the name in \cite{ZhaYu06} as it refers to the fact that the regression coefficients of $X_S$ for response $X_j$ ($j\in T$) all have $\ell_1$-norm less than one, i.e.
\[ \beta_j^\prime = \arg\min_{\beta\in \R^s} \frac{1}{2n}\|X_j  - X_S \beta\|^2 ~\Longrightarrow~ \|\beta_j^\prime\|_1 <1, \]
so in this sense one cannot represent the irrelevant covariates $X_T$ by the relevant ones $X_S$ effectively.

Neither A1 nor A2 can be checked when the support set $S$ of signal is not known. Alternatively we can use a more strict but checkable condition proposed in \cite{DonHuo01}.

\begin{itemize}
\item[(A3)] {\bf Mutual Incoherence Condition}:
\[ \mu := \max_{i,j} \left|\frac{1}{n}\left< X_i, X_j \right>\right| < \frac{1}{(2s-1)}, \ \ \ s=|S|.\]
\end{itemize}

It can be shown \cite{Tropp04,CaiWan11} that once A3 holds, then A1 and A2  simultaneously hold with
\[ \gamma = 1 - \mu (s-1)\] 
since $(1 - \mu (s-1)) I_S \leq X_S^\ast X_S\leq (1 + \mu (s-1)) I_S$, and
\[ \eta = \frac{1 - \mu(2s-1)}{1-\mu(s-1)}.\]
We note that condition A3 is shown to be sharp in the noisy case in \cite{CaiWanXu10}. With these one can translate all the theoretical results with condition A1 and A2 into condition A3.

With these assumptions, the following stopping time is crucial throughout this section
\begin{equation} \label{eq:taubar1}
\overline{\tau} := \frac{\eta} {2 \sigma} \sqrt{\frac{n}{\log p}} \left (\max_{j\in T} \|X_j\|_n\right )^{-1}.
\end{equation}
In applications, we often normalize the measurement matrix $X$ such that $\|X_j\|_n=1$. So the crucial dependence is $\overline{\tau}\sim \eta\sqrt{n/\log p}/\sigma $, which is equivalent to the optimal choice of LASSO parameters \cite{Wainwright09}.

In the sequel, we shall see that the Irrepresentable condition (A2) is essential to ensure the dynamics of ISS firstly evolves on the signal support set $S$, which will be called \emph{no-false-positive} ($\supp(\beta_t) \subseteq S$ for $t\leq \bar{\tau}$); if in addition the signal is strong enough, then all the signals in $S$ will be identified before the wrong ones show up in the paths, indicated by the \emph{sign-consistency} that $\sign(\beta_{\bar{\tau}})=\sign(\beta^*)$. The latter case is also called as \emph{no-false-negative} in statistics. In this case, an $l_2$ error bound of minimax optimal rates can be achieved.

We will examine two scenarios for establishing these results: the first is an interesting mean Bregman ISS path, which is another biased path, distinct to LASSO, yet qualitatively equivalent; the second is the unbiased Bregman ISS path itself, which meets the consistency results above under nearly the same conditions as LASSO.

\subsection{Mean Bregman ISS Path versus LASSO Path}
As we have seen in Section \ref{sec:motivation} near equation \eqref{eq:110sat}, Bregman ISS \eqref{eq:Bregman ISS} can be derived by differentiating LASSO's KKT conditions. Such a relation can be seen precisely by considering the consistency conditions of LASSO on the following temporal \emph{mean path} of Bregman ISS:
\begin{equation} \label{eq:mean}
\bar{\beta}(t) := \frac{1}{t} \int_0^t \beta (s) d s.
\end{equation}
According to Theorem \ref{thm:ISSunique} and Condition A1, Bregman ISS path $\beta_t$ is unique and thus $\bar{\beta}(t)$ is well defined as long as   $\supp(\beta(s))\subseteq S$, $s\in[0,t)$,  where $S$ is the true support.

A connection between Bregman ISS and LASSO lies in the same condition under which their paths from start to time $t$ are supported within the true support $S$. In addition, the Bregman ISS mean path $\bar{\beta}(t)$ is identical to the LASSO path if the Bregman ISS path is incremental with only adding variables, but without dropping.
In general, the two paths are distinct.

\begin{thm} \label{thm:nfp}
Let $(\beta_t,\rho_t)$ be either the Bregman ISS path \eqref{eq:Bregman ISS} or the LASSO path \eqref{eq:lasso-kkt} with $\rho(t)\in \partial \|\beta_t\|_1$. Assume that for all $t\leq \tau$,
\begin{equation} \label{eq:nfp}
\| X^\ast_{T} X_S^\dagger \rho_S(t)  + t X^\ast_T P_{T} \epsilon \|_\infty < 1,
\end{equation}
where $P_{S^\perp} = I-P_S = I - X_S^\dagger X_S^*$ is the projection matrix onto $\im(X_{S})^\perp$. Then for all $t\le \tau$,
\begin{enumerate}
\item[A.] the Bregman ISS path, its mean path, and the LASSO path all have supports in $S$;
\item[B.] the mean Bregman ISS path $\bar{\beta}(1/\lambda)$ is piecewise linear with $\lambda = 1/t$; 
\item[C.] if the Bregman ISS path is incremental in the sense that $S_t=\supp(\beta_t)$ satisfies $S_{t}\subseteq S_{t^\prime} \subseteq S$ for all $t\leq t^\prime\leq \tau$, then the mean Bregman ISS path is identical to the LASSO path; but they are distinct in general. 
\end{enumerate}
\end{thm}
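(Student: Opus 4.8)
The plan is to reduce all three claims to a single master identity describing how the subgradient on the complement $T$ evolves whenever the active set stays inside $S$. Write $r_t := y - X\beta_t$ and use $y = X_S\beta_S^* + \epsilon$. As long as $\supp(\beta_s)\subseteq S$ we have $X\beta_t = X_S\beta_{t,S}$, so $P_{S^\perp} r_t = P_{S^\perp}\epsilon$ with $P_S = X_S^\dagger X_S^*$ and $P_{S^\perp} = I - P_S$ (I read the term $X_T^* P_T\epsilon$ in \eqref{eq:nfp} as $X_T^* P_{S^\perp}\epsilon$, the projection onto $\im(X_S)^\perp$). Since $X_S^* P_{S^\perp} = 0$, the relation $\dot\rho_t = X^*(y-X\beta_t)$ forces $P_S r_t = X_S^\dagger\dot\rho_S(t)$ on the signal block, so on the complement
\[
\dot\rho_T(t) = X_T^* P_S r_t + X_T^* P_{S^\perp}\epsilon = X_T^* X_S^\dagger\dot\rho_S(t) + X_T^* P_{S^\perp}\epsilon .
\]
Integrating from $0$ with $\rho_0=0$ gives $\rho_T(t) = X_T^* X_S^\dagger\rho_S(t) + t\, X_T^* P_{S^\perp}\epsilon$, which is exactly the expression controlled in \eqref{eq:nfp}. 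Splitting \eqref{eq:lasso-kkta} into its $S$- and $T$-blocks and multiplying by $t$ yields the identical identity for the LASSO path, so both cases are handled at once.

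For Part A I would run a continuation argument for the Bregman ISS path. Set $\tau^* := \sup\{t\le\tau : \supp(\beta_s)\subseteq S \text{ for all } s\le t\}$. On $[0,\tau^*)$ the master identity is valid, so \eqref{eq:nfp} gives $\|\rho_T(t)\|_\infty < 1$ there; since a coordinate $j\in T$ can enter the active set only when $|\rho_t(j)|$ reaches $1$, none can, and by right-continuity of $\rho_t$ this persists at $\tau^*$, forcing $\tau^*=\tau$. The same bound certifies no false positive for LASSO through the primal–dual witness construction (build the candidate supported on $S$, compute $\rho_T$ from the identity, verify strict feasibility). The mean path then inherits $\supp(\bar\beta(t))\subseteq S$ since each $\beta(s)$ in the average is supported in $S$.

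For Part B I would use that the Bregman ISS path is piecewise constant (Theorem \ref{thm:ISSunique} and \eqref{eq:rho_t}): on a breakpoint interval $[t_k,t_{k+1})$ one has $\int_0^t\beta(s)\,ds = c_k + (t-t_k)\beta_{t_k}$ with $c_k$ constant, whence $\bar\beta(t) = \beta_{t_k} + (c_k - t_k\beta_{t_k})/t$; substituting $t=1/\lambda$ makes this affine in $\lambda$ on each interval, i.e. piecewise linear. For Part C the key observation is that the Bregman ISS dual variable already certifies $\bar\beta$ as a LASSO solution: integrating $\dot\rho_s = X^*(y-X\beta_s)$ gives $\rho_t = t\,X^*(y-X\bar\beta(t))$, which is precisely \eqref{eq:lasso-kkta} with dual $\rho_t$. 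It remains to check $\rho_t\in\partial\|\bar\beta(t)\|_1$; since $\|\rho_t\|_\infty\le 1$ always holds, the only issue is matching signs on $\supp(\bar\beta(t))$, and this is exactly where the incremental hypothesis enters. If $S_s$ is nondecreasing then, once a coordinate $i$ becomes active, $\rho_s(i)\in\{-1,+1\}$ is frozen and $\beta_s(i)$ keeps that sign, so the time-average $\bar\beta(t)(i)$ shares the sign of $\rho_t(i)$; hence $\rho_t\in\partial\|\bar\beta(t)\|_1$, and by uniqueness of the LASSO solution on $S$ (where $X_S$ has full column rank under A1) we get $\bar\beta(t)=\hat\beta_t$. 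To show the paths differ in general I would exhibit a small example in which a coordinate is dropped, so that $\beta_s(i)$ changes sign along the path, the sign of $\bar\beta(t)(i)$ no longer matches $\rho_t(i)$, and the LASSO certificate breaks.

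The main obstacle is the continuation step in Part A: the master identity is only available while the support remains inside $S$, so no-false-positivity must be established self-consistently rather than assumed, and one must argue carefully—via right-continuity of $\rho_t$ and the discrete break-point structure—that the strict inequality \eqref{eq:nfp} cannot be first violated at the supremum time $\tau^*$. A secondary subtlety in Part C is verifying that the sign-matching step is an equivalence precisely under the incremental assumption, since it is exactly the possibility of dropping a variable that separates the mean Bregman ISS path from the LASSO path.
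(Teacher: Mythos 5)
Your proposal is correct and follows essentially the same route as the paper: the same block-splitting identity $\rho_T(t)=X_T^*X_S^\dagger\rho_S(t)+tX_T^*P_{S^\perp}\epsilon$ for both dynamics, the same integration giving piecewise linearity in $\lambda=1/t$, and the same sign-matching argument under the incremental hypothesis for Part C. Your continuation argument in Part A (and the clean observation that $\rho_t=tX^*(y-X\bar\beta(t))$ recovers the LASSO stationarity condition) tightens a step the paper treats informally, but the underlying argument is the same.
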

\begin{rem}
In particular in noiseless setting, $\epsilon=0$, (\ref{eq:nfp}) becomes
\[ \| X^\ast_{T} X_S^\dagger \rho_S(t)\|_\infty <1 \]
or dropping $\rho_S(t)$ by
\[ \| X^\ast_{T} X_S^\dagger\|_\infty = \| X^\ast_T X_S (X_S^* X_S)^{-1} \|_\infty <1 \]
which is sufficient and necessary to guarantee that both Bregman ISS, LASSO, and OMP \cite{Tropp04} recovers the sparse signal in noiseless setting; once it is violated there is some $S$-sparse signal for which these methods fail.
\end{rem}

 \begin{proof}[Proof of Theorem \ref{thm:nfp}]
Assume there exists a $\tau\geq 0$, such that for all $t\leq \tau$, solution path $\beta(t)$ satisfies $\supp(\beta(t))\subseteq S$. Then Bregman ISS (\ref{eq:Bregman ISS}) splits into
\begin{subequations}\label{eq:bregman-rho}
\begin{align}
\dot \rho_S & = - X^\ast_S X_S (\beta_S- \beta^*_S) + X_S^\ast \epsilon, \label{eq:rhos}\\
\dot \rho_T  & = - X^\ast_T X_S (\beta_S- \beta^*_S) + X_T^\ast \epsilon. \label{eq:rhot}
\end{align}
\end{subequations}
From (\ref{eq:rhos}) one gets the Bregman ISS solution
\begin{equation} \label{eq:ISS_S}
\beta_S(t) =\beta^*_S- ( X^*_S X_S)^{-1}\dot \rho_S  + (X^*_S X_S)^{-1} X_S^\ast \epsilon,
\end{equation}
which leads to the following equation by plugging into (\ref{eq:rhot})
\begin{equation}
\dot \rho_T = X^\ast_T X_S^\dagger \dot \rho_S + X^\ast_T P_T \epsilon.
\end{equation}
Integration on both sides of this equation and setting
\begin{equation} \label{eq:bregman-nfp}
\|\rho_T(t)\|_\infty=\| X^\ast_{T} X_S^\dagger \rho_S(t)  + t X^\ast_T P_{T} \epsilon \|_\infty < 1
\end{equation}
which ensures that $\beta_T(t)=0$. So is the mean path.

On the other hand, LASSO starts from the KKT condition (\ref{eq:lasso-kkt}) which splits into
\begin{subequations}\label{eq:lasso-rho}
\begin{align}
\hat{\rho}_S/t & = - X^\ast_S X_S (\hat{\beta}_S- \beta^*_S) + X_S^\ast \epsilon \label{eq:lasso-rhos}\\
\hat{\rho}_T/t  & = - X^\ast_T X_S (\hat{\beta}_S- \beta^*_S) + X_T^\ast \epsilon \label{eq:lasso-rhot}
\end{align}
\end{subequations}
Following the same trick above one can see the same condition (\ref{eq:bregman-nfp}) is met for LASSO to ensure $\hat{\beta}_T(t)=0$. This finishes the proof of part A.

As to part B, for $t\leq \tau$, the mean path is obtained by integration on (\ref{eq:rhos})
\begin{equation}
\bar{\beta}_S(t) = \frac{1}{t} \int_0^t \beta_S(s) d s = \beta^*_S - \frac{1}{t} (X_S^*X_S)^{-1} \rho_S(t)  +  (X_S^*X_S)^{-1} X_S^\ast \epsilon.
\end{equation}
Equation \eqref{eq:rho_t} implies that $\frac{1}{t}\rho_t = \frac{1}{t} \rho_{t_k}+\frac{1-t_k/t}{t_{k+1}-t_k}\rho_{t_{k+1}}$, which is piecewise linear with respect to $\lambda= 1/t$. 

To see part C, let $S_t=\supp(\beta_t)$ for Bregman ISS. If for all $s\leq t\leq \tau$, $S_s\subseteq S_t\subseteq S$, then similar reasoning as above implies that the Bregman ISS path satisfies
\begin{equation}
\bar{\beta}_{S_t}(t) =  \beta^*_{S_t} - \frac{1}{t} (X_{S_t}^*X_{S_t})^{-1} \rho_{S_t}(t)  +  (X_{S_t}^*X_{S_t})^{-1} X_{S_t}^\ast \epsilon.
\end{equation}
For such incremental processes, $\rho_{S_t}(t) = \sign(\beta_{S_t}(t))=\sign(\bar{\beta}_{S_t}(t))$ which meets the LASSO path equation
 \begin{equation}
 \hat{\beta}_{\hat{S}_t}(t) = \beta^*_{\hat{S}_t} - \frac{1}{t} (X_{\hat{S}_t}^* X_{\hat{S}_t})^{-1} \hat{\rho}_{\hat{S}_t}(t) + (X_{\hat{S}_t}^*X_{\hat{S}_t})^{-1} X_{\hat{S}_t}^*\epsilon,
 \end{equation}
where $\hat{S}_t=\supp(\hat{\beta}_t)$ for LASSO. But such a relation is lost when variable dropping happens.
 \end{proof}

Despite of the difference to the LASSO path, the mean Bregman ISS path may reach statistical model-selection consistency under the same conditions as LASSO.

\begin{thm}[Sign Consistency of Mean Path]
Assume that both (A.1) and (A.2) hold. Then the following holds.
\begin{itemize}
\item[A.] \textbf{(No-false-positive)} the mean path has no-false-positive before time $\overline{\tau}$, i.e., $\forall t \leq \overline{\tau}\,\, \supp(\bar{\beta}_t)\subseteq S$, with probability at least $1-\frac{2}{p\sqrt{\pi\log p}}$;
\item[B.] \textbf{(Sign-Consistency)} moreover if the signal is strong enough such that $\beta^*_{\min} > c_1/\bar{\tau}$,
\[  \ c_1 = \left(\frac{\eta }{\sqrt{\gamma}\max_{j\in T} \|X_j\|_n}+\| (X^\ast_S X_S )^{-1}\|_\infty \right), \]
then with probability at least $1-\frac{2}{p\sqrt{\pi\log p}}$, the mean path $\bar{\beta}_{\bar{\tau}}$ has no false-negative, i.e. $\sign(\bar{\beta}_{\bar{\tau}})= \sign(\beta^*)$.
\end{itemize}
\end{thm}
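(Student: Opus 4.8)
The plan is to reduce both parts to the no-false-positive criterion \eqref{eq:nfp} of Theorem \ref{thm:nfp}, supplemented by two Gaussian maximal inequalities. For part A, Theorem \ref{thm:nfp} tells us it suffices to show that
$$ \|X^\ast_T X_S^\dagger \rho_S(t) + t\, X^\ast_T P_{S^\perp}\epsilon\|_\infty < 1 $$
holds for every $t\le\overline{\tau}$ with the stated probability, where $P_{S^\perp}=I-X_S^\dagger X_S^\ast$ is the projection appearing (as $P_T$) in \eqref{eq:nfp}. I would split this by the triangle inequality into a deterministic part and a noise part. Since $\rho_S(t)\in\partial\|\beta_S\|_1$ forces $\|\rho_S(t)\|_\infty\le 1$, the Irrepresentable Condition (A2) bounds the deterministic part by $\|X^\ast_T X_S^\dagger\|_\infty\le 1-\eta$ uniformly in $t$. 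The noise part $t\|X^\ast_T P_{S^\perp}\epsilon\|_\infty$ is increasing in $t$ for fixed realized noise, so its supremum over $[0,\overline{\tau}]$ is attained at $t=\overline{\tau}$; hence it is enough to force $\overline{\tau}\|X^\ast_T P_{S^\perp}\epsilon\|_\infty<\eta$, which leaves a total below $(1-\eta)+\eta=1$.

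The key estimate is then the Gaussian tail bound on $\|X^\ast_T P_{S^\perp}\epsilon\|_\infty$, whose $j$-th coordinate $\frac{1}{n}(P_{S^\perp}X_j)^T\epsilon$ is centered Gaussian with variance $\frac{\sigma^2}{n^2}\|P_{S^\perp}X_j\|^2\le \frac{\sigma^2}{n}\|X_j\|_n^2$, using that $P_{S^\perp}$ is a non-expansive orthogonal projection and $\|X_j\|^2=n\|X_j\|_n^2$. By the definition \eqref{eq:taubar1} of $\overline{\tau}$, the threshold $\eta/\overline{\tau}=2\sigma\sqrt{(\log p)/n}\,\max_{j\in T}\|X_j\|_n$ sits exactly $2\sqrt{\log p}$ worst-case standard deviations out. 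Applying the refined Gaussian tail $\Pr(|Z|\ge a)\le\sqrt{2/\pi}\,(a/s)^{-1}e^{-a^2/(2s^2)}$ coordinatewise gives each term at most $p^{-2}/\sqrt{2\pi\log p}$, and a union bound over the at most $p$ indices in $T$ yields failure probability at most $\frac{1}{p\sqrt{2\pi\log p}}\le\frac{2}{p\sqrt{\pi\log p}}$, proving part A.

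For part B, I would work on the event of part A, where $\supp(\bar\beta_t)\subseteq S$, so only the signs on $S$ remain. Using the closed form $\bar\beta_S(\overline{\tau})=\beta^\ast_S-\frac{1}{\overline{\tau}}(X_S^\ast X_S)^{-1}\rho_S(\overline{\tau})+(X_S^\ast X_S)^{-1}X_S^\ast\epsilon$ derived in the proof of Theorem \ref{thm:nfp}, it suffices to show $\|\bar\beta_S(\overline{\tau})-\beta^\ast_S\|_\infty<\beta^\ast_{\min}$, since then no $S$-coordinate can cross zero. I would split the left side into the deterministic bias $\frac{1}{\overline{\tau}}\|(X_S^\ast X_S)^{-1}\rho_S(\overline{\tau})\|_\infty\le\frac{1}{\overline{\tau}}\|(X_S^\ast X_S)^{-1}\|_\infty$ and the noise term $\|(X_S^\ast X_S)^{-1}X_S^\ast\epsilon\|_\infty$. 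Its $i$-th coordinate is centered Gaussian with variance $\frac{\sigma^2}{n}[(X_S^\ast X_S)^{-1}]_{ii}\le\frac{\sigma^2}{n\gamma}$ by Restricted Strong Convexity (A1); the threshold $2\sigma\sqrt{(\log p)/(n\gamma)}$ is again $2\sqrt{\log p}$ standard deviations out, so the same coordinatewise tail and a union bound over the $s\le p$ indices in $S$ control the noise term by $\frac{1}{\overline{\tau}}\cdot\frac{\eta}{\sqrt{\gamma}\,\max_{j\in T}\|X_j\|_n}$ off an event of probability at most $\frac{1}{p\sqrt{2\pi\log p}}$. Adding the two pieces gives $\|\bar\beta_S(\overline{\tau})-\beta^\ast_S\|_\infty<c_1/\overline{\tau}<\beta^\ast_{\min}$ by the definition of $c_1$ and the signal-strength hypothesis; intersecting with the event of part A and using a union bound (noting $\sqrt{2}<2$) keeps the total failure probability below $\frac{2}{p\sqrt{\pi\log p}}$.

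I expect the main obstacle to be the careful matching of constants rather than any deep probabilistic input: one must verify that $\overline{\tau}$ in \eqref{eq:taubar1} and the threshold $c_1$ are calibrated so that, after bounding variances via $(X_S^\ast X_S)^{-1}\le\gamma^{-1}I$ and $\|P_{S^\perp}X_j\|\le\|X_j\|$, every Gaussian deviation lands at precisely $2\sqrt{\log p}$ standard deviations, producing the $p^{-2}$ decay that survives the union bound with a $1/(p\sqrt{\log p})$ residual. The delicate point is threading the deterministic budgets $1-\eta$ and $\frac{1}{\overline{\tau}}\|(X_S^\ast X_S)^{-1}\|_\infty$ through so that the remaining slack equals exactly the $\eta$ and $\frac{\eta}{\sqrt{\gamma}\,\max_{j\in T}\|X_j\|_n}/\overline{\tau}$ budgets absorbed by the noise.
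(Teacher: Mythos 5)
Your proposal is correct and follows essentially the same route as the paper: the paper obtains this theorem as the $\kappa\to\infty$ case of Theorem \ref{thm:LB-iss} (via Lemma \ref{eq:sign} and the Gaussian events $\mathcal{B}$, $\mathcal{C}$ controlled by Lemma \ref{prob-bound}), and your direct argument uses the identical split --- the deterministic budget $1-\eta$ from (A2) plus the noise budget $\eta$ absorbed by $\overline{\tau}\|X_T^\ast P_{S^\perp}\epsilon\|_\infty$ for part A, and the bias term $\frac{1}{\overline{\tau}}\|(X_S^\ast X_S)^{-1}\|_\infty$ plus the noise term $\|(X_S^\ast X_S)^{-1}X_S^\ast\epsilon\|_\infty$ compared against $\beta^\ast_{\min}$ for part B --- with the same $2\sqrt{\log p}$-standard-deviation tail bounds. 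The constants and probabilities you obtain match (indeed slightly improve on) those stated.
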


\begin{rem} Under the same conditions as LASSO with $\lambda^*=1/\bar{\tau}$ \cite{Wainwright09}, the mean path $\bar{\beta}$ of Bregman ISS reaches sign-consistency. These conditions are sufficient and necessary in the sense that once violated, there exists an instance such that the probability of failure will be larger than $1/2$ due to noise. 
In this sense, the mean path estimator $\bar{\beta}(\bar{\tau})$ is ``statistically equivalent" to the LASSO estimator. 
\end{rem}

The mean Bregman ISS path geometrically sheds light on why LASSO incurs bias while Bregman ISS can avoid it. The LASSO path, likes the mean Bregman ISS path, involves some kind of averaging that ensures the path continuity but causes bias. The Bregman ISS path is piecewise constant, allows it to be bias-free.

Now we need to answer the following question: \emph{what are conditions to ensure the sign consistency of the Bregman ISS path}?

\subsection{Consistency of Bregman ISS}

The following theorem tells us that under the irrepresentable (incoherence) condition, the Bregman ISS dynamics always evolves in the support of true signals in the early stage; furthermore if the signal is strong enough then the dynamics will pick up all the true variables before selecting any incorrect ones. When such a sign consistency is reached, Bregman ISS returns the oracle estimator which is unbiased.

\begin{thm}[Sign Consistency of Bregman ISS] \label{thm:Bregman ISS}
Assume that both (A.1) and (A.2) hold. Then Bregman ISS (\ref{eq:Bregman ISS}) has paths satisfying:
\begin{itemize}
\item[A.] \textbf{(No-false-positive)} the path has no-false-positive before time $\overline{\tau}$, i.e. $ \forall t\leq \overline{\tau}\,\,\supp(\beta_t)\subseteq S$,  with probability at least $1-\frac{2}{p\sqrt{\pi\log p}}$;
\item[B.] \textbf{(Sign-consistency)} moreover if the signal is strong enough such that
\begin{equation}\label{eq:ISS_condition}
\beta^{*}_{\min}\geq\left(\frac{4 \sigma}{\gamma^{1/2}} \vee\frac{8\sigma(2+\log{s})\left (\max_{j\in T} \|X_j\|_n\right )}{\gamma\eta}\right) \sqrt{\frac{\log{p}}{n}}
\end{equation}
Then with probability at least $1-\frac{2}{p\sqrt{\pi\log p}}$, $\sign(\beta_{\bar{\tau}})=\sign(\beta^*)$.
\end{itemize}
\end{thm}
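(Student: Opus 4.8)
The plan is to handle the two parts in sequence: Part A establishes the no-false-positive regime, which is exactly the setting in which the restricted-to-$S$ dynamics needed for Part B make sense.

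For Part A I would invoke Theorem \ref{thm:nfp}, which reduces no-false-positive before $\bar\tau$ to verifying the event \eqref{eq:nfp}, i.e. $\|X^\ast_T X_S^\dagger \rho_S(t) + t X^\ast_T P_{S^\perp}\epsilon\|_\infty < 1$ for all $t\le\bar\tau$. I split the left-hand side into a deterministic and a stochastic piece. The deterministic piece is controlled by Assumption (A2) together with $\|\rho_S(t)\|_\infty\le 1$ (since $\rho_S\in\partial\|\beta_S\|_1$), giving $\|X^\ast_T X_S^\dagger \rho_S(t)\|_\infty\le 1-\eta$. For the stochastic piece it suffices to control it at the single time $t=\bar\tau$, because $\|t X^\ast_T P_{S^\perp}\epsilon\|_\infty$ is nondecreasing in $t$; each coordinate equals $\frac{t}{n}\langle X_j, P_{S^\perp}\epsilon\rangle$, a centered Gaussian with variance at most $\frac{t^2\sigma^2}{n}\|X_j\|_n^2$ (using $P_{S^\perp}\preceq I$), and the definition of $\bar\tau$ is calibrated precisely so that at $t=\bar\tau$ this standard deviation is $\le \eta/(2\sqrt{\log p})$. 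A Gaussian tail bound with threshold $\eta$ and a union bound over the at most $p$ indices in $T$ then force this piece below $\eta$ off an event of probability at most $\frac{2}{p\sqrt{\pi\log p}}$, which finishes Part A.

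For Part B I would condition on the no-false-positive event from Part A, so $\supp(\beta_t)\subseteq S$ and the dynamics reduce to $S$ through \eqref{eq:rhos}--\eqref{eq:ISS_S}. Sign consistency then splits into two sub-claims: (i) every coordinate of $S$ enters the active set by $\bar\tau$ (no false negatives), and (ii) each active coordinate carries the correct sign. Claim (ii) is the cleaner one: once $\supp(\beta_{\bar\tau})=S$ with the subgradient in the interior, \eqref{iss-sub} returns the oracle estimator $\tilde\beta_S^* = \beta^*_S + (X_S^\ast X_S)^{-1}X_S^\ast\epsilon$, so the signs are correct as soon as $\|(X_S^\ast X_S)^{-1}X_S^\ast\epsilon\|_\infty < \beta^*_{\min}$; since this vector has coordinatewise variance at most $\sigma^2/(n\gamma)$ by (A1), a maximal Gaussian bound over the $s$ coordinates yields exactly the first term $\frac{4\sigma}{\gamma^{1/2}}\sqrt{\log p/n}$ in \eqref{eq:ISS_condition}.

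The main work, and the main obstacle, is Claim (i): bounding the entry times of all $S$-coordinates uniformly by $\bar\tau$ while the active set itself evolves piecewise (with possible variable dropping). Here I would develop the differential inequality advertised in the introduction: on each linear phase $\rho_S$ grows at rate $X_S^\ast X_S(\beta^*_S-\beta_{S,t}) + X_S^\ast\epsilon$, and I would argue that the signal-driven part forces the still-inactive coordinates of $S$ to reach $\pm 1$ at a rate bounded below in terms of $\gamma$ and $\beta^*_{\min}$, so that the slowest entry time is at most of order $1/(\gamma\beta^*_{\min})$ up to noise corrections. Requiring this to be $\le\bar\tau$ and controlling the noise corrections uniformly over the $s$ coordinates (a maximum of $s$ sub-Gaussian entry-time perturbations, which is what produces the $(2+\log s)$ factor) gives the second term $\frac{8\sigma(2+\log s)(\max_{j\in T}\|X_j\|_n)}{\gamma\eta}\sqrt{\log p/n}$. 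The delicate point is the non-smoothness of the differential inclusion — active coordinates have $\dot\rho=0$ while inactive ones move linearly — so rather than tracking one smooth trajectory I must establish monotone progress of the active set across phase boundaries, e.g. via a Bregman-divergence/Lyapunov argument showing that $X_S\beta_{S,t}$ converges (indeed exponentially) toward its target. Combining the failure probabilities of (i) and (ii) with that of Part A, all absorbed into $\frac{2}{p\sqrt{\pi\log p}}$, completes the proof.
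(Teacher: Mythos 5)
Your Part A matches the paper's argument essentially verbatim (via Lemma \ref{eq:sign} with $\kappa\to\infty$: the deterministic piece bounded by $1-\eta$ through (A2) and $\|\rho_S\|_\infty\le 1$, the stochastic piece by a Gaussian tail plus union bound calibrated by $\bar\tau$, cf.\ Lemma \ref{prob-bound}). Your treatment of sign-correctness of the active coordinates via the oracle estimator also matches: the condition $\beta^*_{\min}\ge 4\sigma\gamma^{-1/2}\sqrt{\log p/n}$ is used exactly to force $\tfrac12|\beta^*_i|\le|\tilde\beta^*_i|\le\tfrac32|\beta^*_i|$.

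The gap is in your Claim (i), which is the heart of Part B. Your primary plan — lower-bounding the rate at which each still-inactive coordinate $i\in S$ drives $\rho_i$ to $\pm1$ — does not go through coordinatewise: while $i$ is inactive, $\dot\rho_i=\bigl(X_S^\ast X_S(\beta^*_S-\beta_{S,t})+X_S^\ast\epsilon\bigr)_i$, and since $X_S^\ast X_S$ is not diagonal this quantity has no sign-definite lower bound for an individual $i$; a coordinate's progress can stall or reverse while others are being fitted. The paper circumvents this entirely by working with a single scalar potential, $\Psi(\beta)=D(\tilde\beta^*,\beta)+\|\beta-\tilde\beta^*\|^2/(2\kappa)$, where the Bregman distance $D(\tilde\beta^*,\beta)=\langle\tilde\beta^*,\tilde\rho-\rho\rangle$ vanishes \emph{iff} sign consistency with $\tilde\beta^*$ holds; the key inequality $\|\tilde\beta^*_S-\beta_S\|_2^2\ge\max\{\tilde\beta_{\min}D/2,\,D^2/(4s)\}$ converts strong convexity (A1) into the generalized Bihari differential inequality $\frac{d}{dt}\Psi\le-\gamma F^{-1}(\Psi)$ (Lemma \ref{lem:bihari}), whose middle branch yields \emph{exponential} decay of $D$ at rate $\gamma\tilde\beta_{\min}/2$ and hence the stopping-time bound $\tilde\tau_1\le O(\log s/(\gamma\tilde\beta_{\min}))$ of Lemma \ref{lem:stopping}. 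Consequently your attribution of the $(2+\log s)$ factor to a union bound over $s$ sub-Gaussian entry-time perturbations is incorrect: it comes from integrating the differential inequality, i.e.\ from the time needed for the potential to decay exponentially from its initial value $\|\tilde\beta^*_S\|_1\ge s\tilde\beta^*_{\min}$ down to zero. You do mention a Bregman-divergence/Lyapunov fallback, but without identifying this specific potential, the equivalence of its vanishing with sign consistency, and the piecewise bound $F$ that upgrades the classical $O(1/t)$ decay of $D$ to an exponential one, the argument cannot produce the stated threshold in \eqref{eq:ISS_condition} or guarantee that all of $S$ is captured before $\bar\tau$.
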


\begin{rem}
Once the sign consistency holds, $\beta(t)$ meets the oracle estimator $\tilde{\beta}^*$ which is unbiased and has a $l_2$-error rate $\|\beta(t) - \beta^*\|_2 \leq O(\sigma \sqrt{s\log s /n})$, even better than the $l_2$-error rate $O(\sigma \sqrt{s\log p /n})$ for the biased LASSO estimator which is already \textbf{minimax optimal} up to a logarithmic factor \cite{RasWaiYu11}.
%
\end{rem}

To have sign consistency, Theorem \ref{thm:Bregman ISS} makes a \emph{strong signal} condition with a lower bound on $\beta^*_{\min}$. However even without such a strong signal assumption, the minimax optimal $l_2$-error rates can be achieved disregarding sign consistency.

\begin{thm}[Minimax Optimal $l_2$-Error Bound] \label{thm:l2}
 Assume that both (A1) and (A2) hold. There is a $\tau\in [0,\overline{\tau}]$ such that with probability at least $1-\frac{2}{p\sqrt{\pi\log p}}$,
\[ \|\beta_\tau-\beta^{*}\|_{2}\leq \frac{2\sigma}{\eta\gamma} \left(4\max_{j\in T} \|X_j\|_n+ \eta \sqrt{\gamma}\right)\sqrt{\frac{s\log p}{n}}.\]
\end{thm}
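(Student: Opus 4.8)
The plan is to condition on the \emph{no-false-positive} event of Theorem \ref{thm:Bregman ISS}(A), on which $\supp(\beta_t)\subseteq S$ for all $t\le\bar{\tau}$ and which occurs with probability at least $1-\frac{2}{p\sqrt{\pi\log p}}$; the entire bound would be established on this event, matching the stated probability. On it the dynamics \eqref{eq:Bregman ISS} restricts to $S$: writing $u(t):=\beta_{t,S}-\tilde{\beta}^*_S$, equations \eqref{eq:rhos}--\eqref{eq:ISS_S} give the exact identity $\dot\rho_S=-X^*_SX_S\,u(t)$, i.e. $\beta_{t,S}-\tilde{\beta}^*_S=-(X^*_SX_S)^{-1}\dot\rho_S(t)$, while $\beta_{t,T}=0=\beta^*_T$. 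Hence $\|\beta_t-\beta^*\|_2=\|\beta_{t,S}-\beta^*_S\|_2$, and by the triangle inequality
\[ \|\beta_\tau-\beta^*\|_2\le \|\beta_{\tau,S}-\tilde{\beta}^*_S\|_2+\|\tilde{\beta}^*_S-\beta^*_S\|_2. \]
I would bound the second (oracle) term first: since $\tilde{\beta}^*_S-\beta^*_S=(X^*_SX_S)^{-1}X^*_S\epsilon\sim\NN(0,\Sigma_n)$ with $\Sigma_n\preceq\frac{\sigma^2}{n\gamma}I_s$ by (A1), a standard Gaussian tail bound gives $\|\tilde{\beta}^*_S-\beta^*_S\|_2\le\frac{2\sigma}{\sqrt\gamma}\sqrt{s\log p/n}$ on the same high-probability event. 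This is exactly the $\eta\sqrt\gamma$ contribution to the stated bound.

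It then remains to produce a single $\tau\in[0,\bar{\tau}]$ at which the deviation-from-oracle term is of order $\sqrt s/\bar{\tau}$. By (A1), $\|\beta_{\tau,S}-\tilde{\beta}^*_S\|_2\le\frac1\gamma\|\dot\rho_S(\tau)\|_2$, so it suffices to find $\tau$ with $\|\dot\rho_S(\tau)\|_2\le 4\sqrt s/\bar{\tau}$; substituting $1/\bar{\tau}=\frac{2\sigma}\eta(\max_{j\in T}\|X_j\|_n)\sqrt{\log p/n}$ then produces the $4\max_{j\in T}\|X_j\|_n$ term. I would assemble two ingredients. First, \emph{monotone fidelity}: along \eqref{eq:Bregman ISS} one has $\frac{d}{dt}\tfrac12\|y-X\beta_t\|_n^2=-\langle\dot\rho_t,\dot\beta_t\rangle\le0$, because $\rho_t\in\partial\|\beta_t\|_1$ forces the subgradient-monotonicity inequality $\langle\dot\rho_t,\dot\beta_t\rangle\ge0$; on the no-false-positive event this means $g(t):=\|X_S u(t)\|_n^2$ is non-increasing. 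Second, \emph{bounded budget}: since $\rho_0=0$ and $\|\rho_S(\bar{\tau})\|_\infty\le1$ we have $\|\rho_S(\bar{\tau})\|_2\le\sqrt s$, and integrating $\dot\rho_S=-X^*_SX_S u$ gives $\|\int_0^{\bar{\tau}}u\,dt\|_2\le\frac1\gamma\|\rho_S(\bar{\tau})\|_2\le\frac{\sqrt s}\gamma$, so the time-averaged deviation $\bar u:=\frac1{\bar{\tau}}\int_0^{\bar{\tau}}u\,dt$ satisfies $\|\bar u\|_2\le\sqrt s/(\gamma\bar{\tau})$ --- precisely the mean-path estimate already available from the Sign-Consistency-of-Mean-Path theorem.

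The main obstacle, and the technical heart of the argument, is to pass from this bound on the \emph{time average} $\bar u$ to a bound on $u(\tau)$ at a \emph{single} time $\tau$ (say $\tau=\bar{\tau}$, or the last break point before $\bar{\tau}$). This does not follow from averaging alone, since the fit-error vectors $X_Su(t)$ may rotate, so a small average need not force any individual value to be small. The resolution is to combine the monotone decay in the first ingredient with a differential-inequality control on the trajectory of $X_Su(t)$: because $g(t)$ is non-increasing and the flow drives $\beta_{t,S}$ monotonically toward $\tilde{\beta}^*_S$, the final-time fit error is aligned with the averaged one up to a bounded rotation, which costs only the constant factor $4$ and yields $\|u(\bar{\tau})\|_2\le 4\|\bar u\|_2\le 4\sqrt s/(\gamma\bar{\tau})$. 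Making this ``no-overshoot'' rotation control quantitative is exactly the type of differential inequality for the differential inclusion developed in Section \ref{sec:analysis}; I would invoke that lemma or, alternatively, define $\tau$ to be the first time $g$ falls below the target level $16s/(\gamma\bar{\tau}^2)$ and use the budget constraint to certify that such a $\tau\le\bar{\tau}$ exists (this alternative hinges on the same quantitative decay and so shares the same difficulty). Combining the oracle and deviation terms and factoring out $\frac{2\sigma}{\eta\gamma}\sqrt{s\log p/n}$ then gives the claimed bound.
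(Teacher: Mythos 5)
Your overall skeleton matches the paper's: condition on the no-false-positive event, split $\|\beta_\tau-\beta^*\|_2$ by the triangle inequality through the oracle estimator, bound the oracle term by $\frac{2\sigma}{\sqrt\gamma}\sqrt{s\log p/n}$, and reduce the theorem to exhibiting one $\tau\le\bar\tau$ with $\|\beta_{\tau,S}-\tilde\beta^*_S\|_2\le C\sqrt{s\log p/n}$ for $C=\frac{8\sigma\max_{j\in T}\|X_j\|_n}{\eta\gamma}$. The gap is in how you produce that $\tau$. Your two ingredients --- monotonicity of $\|X_S(\beta_{t,S}-\tilde\beta^*_S)\|$ and the budget bound $\|\bar u\|_2\le\sqrt s/(\gamma\bar\tau)$ on the \emph{time average} --- genuinely do not suffice: monotone decay of the norm $\|X_Su(t)\|$ places no constraint on the direction of $X_Su(t)$, so the trajectory can rotate and keep $\|u(t)\|_2$ bounded away from zero while $\int_0^{\bar\tau}u\,dt$ stays small by cancellation. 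The asserted ``bounded rotation costing only the constant factor $4$'' is precisely the statement that needs proof, and it is not what the machinery of Section \ref{sec:analysis} provides; Lemma \ref{lem:bihari} is a potential-decay inequality, not a rotation control. You acknowledge the difficulty honestly, but neither of your two fallbacks closes it: the budget constraint cannot certify that $g$ ever falls below the target level, for the same cancellation reason.

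The paper's mechanism is different and does not pass through the time average at all. It tracks the Bregman distance $D(\tilde\beta^*,\beta_t)=\|\tilde\beta^*\|_1-\langle\rho_t,\tilde\beta^*\rangle$, whose derivative along the restricted dynamics equals $-\frac1n\|X_S(\beta_{t,S}-\tilde\beta^*_S)\|^2\le-\gamma\|u(t)\|_2^2$, together with the comparison $D\le 2\sqrt{s}\,\|u\|_2$ from Lemma \ref{lem:bihari} (the third branch of $F$, with the $x/(2\kappa)$ term absent since $\kappa=\infty$). As long as $\|u(t)\|_2>C\sqrt{s\log p/n}$ one therefore has the two-regime inequality $\dot D\le-\gamma\max\{D^2/(4s),\,C^2s\log p/n\}$; integrating it as in Lemma \ref{lem:stopping} shows the first time $\tilde\tau_2(C)$ at which $\|u\|_2$ drops below the target is at most $\frac{4}{C\gamma}\sqrt{n/\log p}$, which equals $\bar\tau$ exactly when $C=\frac{8\sigma\max_{j\in T}\|X_j\|_n}{\eta\gamma}$ --- this integration is where the factor $4$ actually comes from. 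To repair your proof, replace the averaging/rotation step by this potential-decay argument (or cite Theorem \ref{thm:LB-iss}D with $\kappa\to\infty$, which is how the paper formally derives Theorem \ref{thm:l2}).
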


The existence of such $\tau$ does not ensure us to find it easily. However one can use $\bar{\tau}$ at a cost of enlarging the constants by a square root of condition number of $\Sigma_S=X^*_S X_S$.
\begin{cor}
Under the same condition of Theorem \ref{thm:l2} and assuming an upper eigenvalue bound $X^*_S X_S\leq \gamma_{\max} I_S $, then the following holds for all $t\in [\tau,\bar{\tau}]$ with probability at least $1-\frac{2}{p\sqrt{\pi\log p}}$
\[  \|\beta_{t}-\beta^{*}\|_{2}\leq \frac{2\sigma\sqrt{\K(X_S^*X_S)}}{\eta \gamma}\left(4\max_{j\in T} \|X_j\|_n+ \eta\sqrt{\gamma}\right)\sqrt{\frac{s\log p}{n}} \]
where $\K(X_S^*X_S)=\gamma_{\max}/\gamma$ is the condition number of $X_S^*X_S$.
\end{cor}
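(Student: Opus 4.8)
The plan is to leverage Theorem \ref{thm:l2}, which already supplies a single time $\tau\in[0,\bar{\tau}]$ at which $\|\beta_\tau-\beta^*\|_2$ is controlled, and to promote this into a uniform bound over the whole interval $[\tau,\bar{\tau}]$ by tracking how the error to the oracle estimator $\tilde{\beta}^*$ evolves. First I would restrict attention to the no-false-positive event of part A of Theorem \ref{thm:Bregman ISS}, on which $\supp(\beta_t)\subseteq S$ for every $t\le\bar{\tau}$; this event carries probability at least $1-\tfrac{2}{p\sqrt{\pi\log p}}$, matching the claimed probability. On this event the dynamics collapses to the active block, and by \eqref{eq:ISS_S} together with $X_S^*(y-X_S\tilde{\beta}_S^*)=0$ one gets $\dot\rho_S(t)=X_S^*X_S(\tilde{\beta}^*_S-\beta_S(t))$, so the reduced flow is driven toward the oracle $\tilde{\beta}^*_S$ of \eqref{eq:oracle}.

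The core step is a monotonicity lemma: the prediction error to the oracle, $t\mapsto \|X_S(\beta_S(t)-\tilde{\beta}^*_S)\|_n$, is non-increasing on $[0,\bar{\tau}]$. I would prove this by relating it to the data residual. Because the oracle residual $y-X_S\tilde{\beta}_S^*$ is $\langle\cdot,\cdot\rangle_n$-orthogonal to $\im(X_S)$, Pythagoras gives $\|y-X_S\beta_S(t)\|_n^2=\|y-X_S\tilde{\beta}_S^*\|_n^2+\|X_S(\beta_S(t)-\tilde{\beta}_S^*)\|_n^2$, in which the first term is constant, so it suffices to show the residual $\|y-X_S\beta_S(t)\|_n$ is non-increasing. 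Since $\beta_t$ is piecewise constant (Theorem \ref{thm:ISSunique}), this reduces to the jumps at the breakpoints $t_{k+1}$: in the incremental case the previous iterate $\beta_{t_k}$ is feasible for the constrained least squares \eqref{iss-sub} solved at $t_{k+1}$ (its support lies in $S_{k+1}$ and $\rho_{t_{k+1},i}\beta_{t_k,i}\ge 0$ on the active coordinates), so the new optimal value cannot exceed $\|y-X\beta_{t_k}\|_2^2$. This is exactly the residual-decay property of Bregman ISS.

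With the monotonicity in hand, for any $\tau\le t\le\bar{\tau}$ I would chain the spectral bounds from (A1) and the assumed $X_S^*X_S\le\gamma_{\max}I_S$:
\[
\|\beta_t-\tilde{\beta}^*\|_2\le \tfrac{1}{\sqrt{\gamma}}\,\|X_S(\beta_t-\tilde{\beta}^*)\|_n\le \tfrac{1}{\sqrt{\gamma}}\,\|X_S(\beta_\tau-\tilde{\beta}^*)\|_n\le \sqrt{\tfrac{\gamma_{\max}}{\gamma}}\,\|\beta_\tau-\tilde{\beta}^*\|_2=\sqrt{\K(X_S^*X_S)}\,\|\beta_\tau-\tilde{\beta}^*\|_2,
\]
where the middle inequality is the monotonicity lemma and the outer two are the lower and upper spectral bounds on $X_S^*X_S$; note that the factor $\sqrt{\K}$ arises precisely from converting an estimation error into a prediction error at $\tau$ (cost $\sqrt{\gamma_{\max}}$) and back into an estimation error at $t$ (cost $1/\sqrt{\gamma}$). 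A triangle inequality $\|\beta_t-\beta^*\|_2\le\|\beta_t-\tilde{\beta}^*\|_2+\|\tilde{\beta}^*-\beta^*\|_2$, the Theorem \ref{thm:l2} bound on $\|\beta_\tau-\beta^*\|_2$ (which already absorbs the oracle deviation $\|\tilde{\beta}^*-\beta^*\|_2$ through the same Gaussian tail estimate used there), and collecting constants then yield the stated bound carrying the factor $\sqrt{\K(X_S^*X_S)}$.

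The main obstacle I anticipate is the monotonicity lemma when the path is not incremental, i.e. when variable dropping occurs so that $S_{k+1}\not\supseteq S_k$ and $\beta_{t_k}$ is no longer obviously feasible for \eqref{iss-sub}; there the naive nested-feasible-set argument fails and one must instead appeal to the general monotone decay of the data-fidelity $\tfrac{1}{2}\|y-X\beta_t\|_n^2$ along the Bregman ISS differential inclusion, established through the differential-inequality machinery of Section \ref{sec:analysis}. A secondary, purely bookkeeping, difficulty is to check that the lower-order oracle terms are absorbed cleanly so that the final constant is exactly $\tfrac{2\sigma}{\eta\gamma}\big(4\max_{j\in T}\|X_j\|_n+\eta\sqrt{\gamma}\big)$ times $\sqrt{\K(X_S^*X_S)}$.
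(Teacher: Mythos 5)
Your proposal is correct and follows essentially the same route as the paper: the paper's Remark after Theorem \ref{thm:LB-iss} and Lemma \ref{lem:residue} prove the corollary exactly by the monotonicity of $\|X_S(\beta_S(t)-\tilde{\beta}_S^*)\|$ on $[0,\bar{\tau}]$ (via the orthogonal residual decomposition) followed by the same two spectral conversions that produce the $\sqrt{\K(X_S^*X_S)}$ factor. The only cosmetic difference is that the paper establishes the monotonicity from the differential identity $\langle\dot\rho_S,\dot\beta_S\rangle=0$ (and its discrete analogue) rather than your breakpoint-feasibility argument for \eqref{iss-sub}, which also works since any coordinate with $\beta_{t_k,i}\neq 0$ keeps $|\rho_i|=1$, so $\beta_{t_k}$ remains feasible even when variables are dropped.
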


All the results in this subsection follow from the more general results on Linearized Bregman ISS (\ref{eq:lb-iss}) in the next section by taking $\kappa\to\infty$, whose proofs will be summarized in Section \ref{sec:analysis}.


\section{Generalizations to Linearized Bregman ISS and Its Discretization} \label{sec:general}
In this section, we state a general consistency result for Linearized Bregman ISS (\ref{eq:lb-iss}) and Linearized Bregman Iterations (\ref{eq:lb}) whose proofs will be given in the next section.

The following new stopping time replaces \eqref{eq:taubar1} throughout this section.
\begin{equation} \label{eq:taubar2}
\overline{\tau} := \frac{(1 - B/(\kappa\eta))\eta} {2 \sigma} \sqrt{\frac{n}{\log p}} \left (\max_{j\in T} \|X_j\|_n\right )^{-1}.
\end{equation}
Clearly when $\kappa\to \infty$, it reduces to \eqref{eq:taubar1}.

\subsection{Consistency of Linearized Bregman ISS}
The following theorem establishes general conditions for statistical consistency of Linearized Bregman ISS (LBISS) \eqref{eq:lb-iss}.
\begin{thm}[Consistency of LBISS]\label{thm:LB-iss}
Assume (A1), (A2), and $\kappa$ is big enough such that
$$ \beta^*_{\max} + 2 \sigma \sqrt{\frac{\log p}{\gamma n}} + \frac{\|X\beta^*\|_2 + 2\sigma \sqrt{s\log{n}}}{n\sqrt{\gamma}} \triangleq B \leq \kappa\eta. $$
Then (\ref{eq:lb-iss}) has paths satisfying:
\begin{itemize}
\item[A.] \textbf{(No-false-positive)} the path has no-false-positive before time $\overline{\tau}$ ,i.e., $\forall t \leq \overline{\tau}$, $ \supp(\beta_t)\subseteq S$, with probability at least $1-\frac{2}{p\sqrt{\pi\log p}}-\frac{1}{n\sqrt{\pi\log n}}$;
\item[B.] \textbf{(No-false-negative for Mean Path)} moreover if the signal is strong enough such that $\beta^*_{\min} > c_1/\bar{\tau}$,
\[  \ c_1 = \left(\frac{(1-B/(\kappa\eta))\eta }{\sqrt{\gamma}\max_{j\in T} \|X_j\|_n}+(1+B/\kappa\eta)\| (X^\ast_S X_S )^{-1}\|_\infty \right), \]
then with probability at least $1-\frac{2}{p\sqrt{\pi\log p}}-\frac{1}{n\sqrt{\pi\log n}}$, the mean path $\bar{\beta}(t)$ satisfies $\sign(\bar{\beta}_{\bar{\tau}}) = \sign(\beta^*)$;
\item[C.] \textbf{(Sign-consistency for LBISS)} moreover if the smallest magnitude $\beta^{*}_{\min}$ is strong enough and $\kappa$ big enough such that
\[\beta^\ast_{\min} \geq \frac{4 \sigma}{\gamma^{1/2}}\sqrt{\frac{\log p}{n}}, \] \[\frac{8+4\log{s}}{\beta^*_{\min}}+\frac{1}{\kappa}\log(\frac{3\|\beta^\ast\|_{2}}{\beta^*_{\min}}) \leq \overline{\tau},\]
then with probability at least $1-\frac{2}{p\sqrt{\pi\log p}}-\frac{1}{n\sqrt{\pi\log n}}$, $\sign(\beta_{\bar{\tau}})=\sign(\beta^*)$;
\item[D.] \textbf{($l_{2}$-bound)} for some constant $C$ and $\kappa$ large enough to satisfy
\[\frac{4}{C\gamma}\sqrt{\frac{n}{\log p}}+\frac{1}{2\kappa\gamma}(1+\log{\frac{n\|\beta^\ast\|^{2}_{2}+4\sigma^2s\log{p}/\gamma}{C^{2}s\log{p}}}) \leq \overline{\tau},\]
there is a $\tau\in [0,\overline{\tau}]$ such that $\|\beta_\tau-\beta^{*}\|_{2} \leq (C+\frac{2\sigma}{\gamma^{1/2}}) \sqrt{\frac{s\log p}{n}}$ with probability at least $1-\frac{2}{p\sqrt{\pi\log p}}-\frac{1}{n\sqrt{\pi\log n}}$.
\end{itemize}
\end{thm}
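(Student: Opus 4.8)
The plan is to reduce all four parts to a single analysis of the Linearized Bregman ISS dynamics \eqref{eq:lb-iss} decomposed along the true support $S$ and its complement $T$, in the spirit of the splitting \eqref{eq:bregman-rho} used for Theorem \ref{thm:nfp}. Introducing the combined variable $w_t=\rho_t+\frac{1}{\kappa}\beta_t$ turns the inclusion into $\dot w_t=X^*(y-X\beta_t)$, together with the soft-thresholding relation $\beta_t=\kappa\cdot\mathrm{shrink}(w_t,1)$ inherited from the Elastic Net reading of \eqref{eq:lb-iss}. I would run a continuity (bootstrap) argument: on any maximal interval $[0,\tau]$ on which $\supp(\beta_t)\subseteq S$, the $T$-block reduces to $\dot\rho_T=X_T^* X_S(\beta^*_S-\beta_S)+X_T^*\epsilon$, and eliminating $\beta^*_S-\beta_S$ through the $S$-block $\dot\rho_S+\frac{1}{\kappa}\dot\beta_S=X_S^* X_S(\beta^*_S-\beta_S)+X_S^*\epsilon$ and integrating from $0$ yields the exact identity $\rho_T(t)=X_T^* X_S^{\dagger}\bigl(\rho_S(t)+\frac{1}{\kappa}\beta_S(t)\bigr)+t\,X_T^* P_T\epsilon$. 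All four statements follow from controlling this identity and the $S$-block error relative to the oracle $\tilde\beta^*_S$.

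For part A (no-false-positive) I would bound $\|\rho_T(t)\|_\infty$ via the three terms of that identity: assumption (A2) controls $\|X_T^* X_S^\dagger\rho_S(t)\|_\infty\le 1-\eta$; the damping contribution is bounded by $\frac{1}{\kappa}\|X_T^* X_S^\dagger\beta_S(t)\|_\infty\le B/\kappa$ using the a priori estimate $\|\beta_S(t)\|_\infty\le B$ built into the definition of $B$; and the Gaussian term $t\|X_T^* P_T\epsilon\|_\infty$ is controlled at $t=\bar\tau$ by a maximal inequality over the $|T|\le p$ coordinates of $X_T^*\epsilon$. The factor $(1-B/(\kappa\eta))$ in \eqref{eq:taubar2} is tuned exactly so that, after the $1/\kappa$ correction is subtracted from the available margin $\eta$, the residual margin $(1-B/(\kappa\eta))\eta=\eta-B/\kappa$ is precisely what is allocated to the noise, giving $\|\rho_T(\bar\tau)\|_\infty<1$ and hence $\beta_T=0$. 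The two failure probabilities arise from a union bound over the $p$ Gaussians $X_j^* P_T\epsilon$, $j\in T$ (the $\frac{2}{p\sqrt{\pi\log p}}$ term), and from the separate concentration needed to justify the a priori bound $\|\beta_S\|_\infty\le B$ through the quantity $2\sigma\sqrt{s\log n}$ feeding $B$ (the $\frac{1}{n\sqrt{\pi\log n}}$ term).

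Granting no-false-positive, parts B and C analyze the $S$-block. For B I would integrate \eqref{eq:bregman-rho} to obtain $\bar\beta_S(t)=\beta^*_S-\frac{1}{t}(X_S^* X_S)^{-1}\bigl(\rho_S(t)+\frac{1}{\kappa}\beta_S(t)\bigr)+(X_S^* X_S)^{-1}X_S^*\epsilon$ and bound the deviation from $\beta^*_S$ coordinatewise by $c_1/\bar\tau$; the displayed $(1+B/\kappa\eta)$ factor in $c_1$ is exactly the $\frac{1}{\kappa}\beta_S$ correction, and the strong-signal hypothesis $\beta^*_{\min}>c_1/\bar\tau$ then forces $\sign(\bar\beta_{\bar\tau})=\sign(\beta^*)$. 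Part C is stronger and is where the \emph{differential inequality} does the real work: once all of $S$ is active, $\rho_S$ is frozen and the error to the oracle $v_t=\tilde\beta^*_S-\beta_S(t)$ obeys $\dot v=-\kappa X_S^* X_S v$, whence $\frac{d}{dt}\|v_t\|_2^2\le-2\kappa\gamma\|v_t\|_2^2$ by (A1), i.e. exponential contraction at rate $\kappa\gamma$. I would show each signal coordinate enters the active set within time about $(8+4\log s)/\beta^*_{\min}$—the first term of the C-condition, governed by how fast $|w_{S,i}|$ crosses $1$ under a strong signal—and that the subsequent contraction drives every coordinate to within $\beta^*_{\min}$ of $\tilde\beta^*$, hence to the correct sign, within the extra time $\frac{1}{\kappa}\log(3\|\beta^*\|_2/\beta^*_{\min})$, so that all signs settle by $\bar\tau$.

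For part D I would drop the strong-signal assumption and optimize the stopping time instead. Splitting $\beta_\tau-\beta^*=(\beta_\tau-\tilde\beta^*)+(\tilde\beta^*-\beta^*)$, the second term is the oracle estimation error of size $O(\sigma\sqrt{s\log p/n})$, while the first is the transient of the damped dynamics, decaying via the same contraction as in part C; balancing the two over $\tau\in[0,\bar\tau]$ yields the stated bound, and the displayed $\kappa$-condition guarantees that the optimal balance point lies inside $[0,\bar\tau]$. \textbf{The main obstacle is part C.} The contraction estimate $\frac{d}{dt}\|v_t\|_2^2\le-2\kappa\gamma\|v_t\|_2^2$ is valid only while the active set is constant, so the genuine difficulty is the bookkeeping across the finitely many times at which coordinates enter (or could a priori leave) the active set: one must show entries are fast, that no signal coordinate leaves once its sign is correct, and that the cumulative time stays below $\bar\tau$. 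Reconciling the discrete combinatorics of the active set with the continuous differential inequality is the crux of the argument.
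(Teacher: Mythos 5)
Your treatment of parts A and B matches the paper's proof essentially step for step: the same $S$/$T$ splitting, the same integrated identity $\rho_T(t)+\beta_T(t)/\kappa = X_T^*X_S^\dagger(\rho_S(t)+\beta_S(t)/\kappa)+tX_T^*P_T\epsilon$ (Lemma \ref{eq:sign}), the same allocation of the margin $\eta$ between the irrepresentable bound, the $B/\kappa$ damping correction, and the noise, and the same three Gaussian events producing the two failure probabilities. The a priori bound $\|\beta_S(t)\|_\infty\le B$ is justified in the paper by the monotone decrease of $\|X_S(\tilde\beta^*_S-\beta_S(t))\|_2$ along the oracle dynamics, which you should make explicit, but this is a detail.

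The genuine gap is in part C, and it is exactly the one you flag yourself. Your plan is: (i) show each signal coordinate enters the active set within time about $(8+4\log s)/\beta^*_{\min}$, (ii) once all of $S$ is active, $\rho_S$ is frozen, so $v_t=\tilde\beta^*_S-\beta_S(t)$ contracts linearly at rate $\kappa\gamma$, and (iii) do the bookkeeping across active-set changes. Step (iii) is not a technicality to be reconciled later; it is the whole difficulty, and your outline does not resolve it. Coordinates of the Linearized Bregman path can in principle be dropped as well as added, the contraction $\dot v=-\kappa X_S^*X_Sv$ is only valid on intervals where every coordinate of $S$ is strictly active with constant sign, and the per-coordinate entry-time estimate in (i) must be proved while the other coordinates are simultaneously evolving and altering the drift $X_S^*X_S(\beta_S-\tilde\beta^*_S)$. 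Moreover your attribution of the $\log s$ factor to the crossing time of $|w_{S,i}|$ is not where it comes from. The paper circumvents all of this with a single global Lyapunov function $\Psi(\beta)=D(\tilde\beta^*,\beta)+\|\beta-\tilde\beta^*\|_2^2/(2\kappa)$, where $D$ is the Bregman distance of the $\ell_1$ norm: $D$ vanishes precisely at sign consistency, $\Psi$ is monotone along the whole oracle dynamics regardless of how the active set changes, and the generalized Bihari inequality $\frac{d}{dt}\Psi\le-\gamma F^{-1}(\Psi)$ with the piecewise $F$ of \eqref{eq:F} yields the stopping-time bound of Lemma \ref{lem:stopping} by a single integration $\int d\Psi/F^{-1}(\Psi)$; the $\log s$ arises from the exponential-decay regime $\tilde\beta_{\min}^2\le x\le s\tilde\beta_{\min}^2$ of $F$, i.e.\ from driving the potential down from order $s\tilde\beta_{\min}$ to $0$, not from entry times. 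Part D in the paper is the same mechanism with the stopping time $\tilde\tau_2(C)$ and the modified $\tilde F$, rather than an explicit bias--variance balancing. To complete your proof of parts C and D you would either have to carry out the active-set combinatorics you defer, or adopt the potential-function route, which is the paper's key contribution here.
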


\begin{rem}
\begin{itemize}
\item[A.] For sign-consistency of LBISS,
\[ \beta^\ast_{\min} \geq \left(\frac{4 \sigma}{\gamma^{1/2}} \vee \frac{8\sigma(2+\log{s})\left (\max_{j\in T} \|X_j\| \right )}{\gamma\eta}\right)\sqrt{\frac{\log p}{n}} \]
is enough to guarantee the existence of $\kappa$.
\item[B.]  For $l_{2}$-consistency
\[C \geq \frac{8\sigma\left (\max_{j\in T} \|X_j\|_n\right )}{\eta\gamma}\]
is enough to guarantee the existence of $\kappa$.
\item[C.] Taking $\kappa = \infty$, we get the Theorem \ref{thm:Bregman ISS} for Bregman ISS.
\item[D.] An $l_2$-error bound of the same rate for estimator $\beta(\bar{\tau})$ can be established using the monotonicity of $\|X_S(\tilde{\beta}_S^* - \beta_S(t))\|_2$ (see Appendix) for $t \leq \bar{\tau}$,
\begin{eqnarray*}
\|\beta(\bar{\tau}) - \tilde{\beta}^*\|_2 & \leq & \frac{\| X_S (\beta_S(\bar{\tau}) - \tilde{\beta}_S^* ) \|_2}{\sqrt{n{\gamma}}} \leq \frac{\|X_S(\beta_S(\tau) - \tilde{\beta}_S^* ) \|_2}{\sqrt{n\gamma}} , \ \ \tau\leq \bar{\tau}\\
&\leq &\sqrt{\K(X_S^* X_S) } \left(C+\frac{2\sigma}{\sqrt{\gamma}}\right)\sqrt{\frac{s \log p}{n}},
\end{eqnarray*}
where $\K(X_S^*X_S)$ is the condition number of $X_S^*X_S$.
\end{itemize}
\end{rem}

\subsection{Consistency of Linearized Bregman iterations} The following theorem establishes statistical consistency conditions for  Linearized Bregman Iteration \eqref{eq:lb}.

\begin{thm}[Consistency of Linearized Bregman Iterations]\label{thm:LB}
Let $t_n = \sum_{k=0}^{n-1} \alpha_k$.
Assume (A1), (A2), and $\kappa$ is big enough such that
$$ \beta^*_{\max} + 2 \sigma \sqrt{\frac{\log p}{\gamma n}}  + \frac{\|X\beta^*\|_2 + 2\sqrt{s\log{n}}}{n\sqrt{\gamma}} \triangleq B \leq \kappa\eta, $$
and step size $\alpha$ is small such that $\kappa \alpha\|X_S X_S^*\|<2$. Then any solution path of (\ref{eq:lb-iss}) satisfies
\begin{itemize}
\item[A.] \textbf{(No-false-positive)} for all $n$~ s.t. $t_n\leq \overline{\tau}$, the path has no-false-positive with probability at least $1-\frac{2}{p\sqrt{\pi\log p}}-\frac{1}{n\sqrt{\pi\log n}}$, $\supp(\beta_k)\subseteq S$;
\item[B.] \textbf{(Sign-consistency)} moreover if the smallest magnitude $\beta^{*}_{\min}$ is strong enough and $\kappa$ is big enough to ensure
\[\beta^\ast_{\min} \geq \frac{4 \sigma}{\gamma^{1/2}}\sqrt{\frac{\log p}{n}}, \]
\[ \frac{8+4\log{s}}{\tilde{\gamma}\beta^*_{\min}}+\frac{1}{\kappa\tilde{\gamma}}\log(\frac{3\|\beta^\ast\|_{2}}{\beta_{\min}}) + 3\alpha \leq \overline{\tau}, \]
where $\tilde{\gamma} := \gamma (1-\kappa \alpha\|X_SX_S^\ast\|/2)$, then with probability at least $1-\frac{2}{p\sqrt{\pi\log p}}-\frac{1}{n\sqrt{\pi\log n}}$, $\sign(\beta_{k^*})=\sign(\beta^*)$ for ${k^*}=\max\{k:t_k\leq\bar{\tau}\}$.
\item[C.] \textbf{($l_{2}$-bound)} for some large enough constants $\kappa$ and $C$ such that
$$\frac{4}{C\tilde{\gamma}}\sqrt{\frac{n}{\log p}}+\frac{1}{2\kappa\tilde{\gamma}}(1+\log{\frac{n\|\beta^\ast\|^{2}_{2}+4\sigma^2s\log{p}/\gamma}{C^{2}s\log{p}}}) + 2\alpha \leq \overline{\tau},$$
with probability at least $1-\frac{2}{p\sqrt{\pi\log p}}-\frac{1}{n\sqrt{\pi\log n}}$, there is a $k^*$, $t_{k^*}\leq \bar{\tau}$, such that $\|\beta_{k^*}-\beta^{*}\|_{2} \leq (C+\frac{2\sigma}{\gamma^{1/2}}) \sqrt{\frac{s\log p}{n}}$.
\end{itemize}
\end{thm}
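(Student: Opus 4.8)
The plan is to treat the Linearized Bregman iteration \eqref{eq:lb} as a forward-Euler discretization of the LBISS dynamics \eqref{eq:lb-iss} and to reproduce, step by step, the continuous argument behind Theorem \ref{thm:LB-iss}, carrying the discretization error along at each stage; the three features that distinguish the discrete statement — the stability requirement $\kappa\alpha\|X_SX_S^\ast\|<2$, the degraded rate $\tilde\gamma=\gamma(1-\kappa\alpha\|X_SX_S^\ast\|/2)$, and the additive $O(\alpha)$ slack in the timing conditions — all originate from this error tracking. I would first pass to the $z_k=\rho_k+\frac1\kappa\beta_k$ variables, in which \eqref{eq:lb2} reads
\begin{equation*}
z_{k+1}-z_k=\alpha_k\bigl(X^\ast X(\beta^*-\beta_k)+X^\ast\epsilon\bigr),\qquad \beta_{k}=\kappa\,\mathrm{shrink}(z_k,1).
\end{equation*}
Arguing by induction on $k$, suppose $\supp(\beta_j)\subseteq S$ for all $j\le k$. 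Splitting the update into its $S$- and $T$-blocks and eliminating the common term $X_S^\ast X_S(\beta^*_S-\beta_{k,S})$ between the two blocks yields the discrete analog of the continuous identity used in Theorem \ref{thm:nfp}, namely
\begin{equation*}
z_{k,T}=X_T^\ast X_S^\dagger\,z_{k,S}+t_k\,X_T^\ast P_{S^\perp}\epsilon,
\end{equation*}
the telescoping sum producing the factor $t_k=\sum_{j<k}\alpha_j$ exactly as the integral produced $t$ in the continuous case.

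For part A I would then bound the right-hand side: $\|X_T^\ast X_S^\dagger\rho_{k,S}\|_\infty\le 1-\eta$ by the irrepresentable condition (A2), the remaining $\frac1\kappa X_T^\ast X_S^\dagger\beta_{k,S}$ term by $B/\kappa$, and $t_k\|X_T^\ast P_{S^\perp}\epsilon\|_\infty$ by a Gaussian maximal inequality over $|T|\le p$ coordinates. The definition of $\overline\tau$ in \eqref{eq:taubar2} is precisely calibrated so that, with probability at least $1-\tfrac{2}{p\sqrt{\pi\log p}}$, the noise term stays below the leftover budget $(1-B/(\kappa\eta))\eta$ for all $t_k\le\overline\tau$, whence $\|z_{k,T}\|_\infty<1$, i.e. $\beta_{k+1,T}=0$, closing the induction. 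This step also requires the a-priori bound $\|\beta_{k,S}\|_\infty\le B$; I would obtain it from a discrete comparison argument on the $S$-block (using that $\beta_{k,S}$ is driven toward the oracle $\tilde\beta^*_S$, together with Gaussian control of $(X_S^\ast X_S)^{-1}X_S^\ast\epsilon$ and of the transient), and it is here that the second concentration event contributes the $\tfrac1{n\sqrt{\pi\log n}}$ term through the $2\sigma\sqrt{s\log n}$ piece of $B$, and that the step-size condition enters to keep the Euler iteration stable.

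On the event of part A the $T$-block is frozen and the $S$-block obeys the autonomous recursion $z_{k+1,S}-z_{k,S}=\alpha_k X_S^\ast X_S(\tilde\beta^*_S-\beta_{k,S})$. For parts B and C the heart of the matter is a \emph{discrete Bregman/Lyapunov inequality} showing that $\|X_S(\beta_{k,S}-\tilde\beta^*_S)\|_n$ contracts geometrically at rate $\tilde\gamma$: one tests the recursion against $\tilde\beta^*_S-\beta_{k,S}$, uses $\rho_{k,S}\in\partial\|\beta_{k,S}\|_1$ to produce a nonnegative Bregman term, and absorbs the single quadratic overshoot created by the explicit step into the factor $(1-\kappa\alpha\|X_SX_S^\ast\|/2)$, which is exactly why $\gamma$ degrades to $\tilde\gamma$ and why $\kappa\alpha\|X_SX_S^\ast\|<2$ is needed. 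Feeding this contraction into (A1) converts it to an $\ell_2$ bound on $\beta_{k,S}-\beta^*_S$, giving part C once the timing condition guarantees that the required $k^*$ falls in $[0,\overline\tau]$; the same contraction, together with the signal-strength bound $\beta^*_{\min}\ge 4\sigma\gamma^{-1/2}\sqrt{\log p/n}$ ensuring each oracle coordinate is bounded away from zero, forces every signal entry to have entered with the correct sign by step $k^*$, giving part B. The additive $3\alpha$ and $2\alpha$ in the timing hypotheses, and the $\frac1\kappa$ logarithmic terms, are respectively the discretization slack and the damping transient inherited from the $\frac1\kappa\dot\beta$ term in \eqref{eq:lb-issa}.

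I expect the main obstacle to be the discrete Bregman/Lyapunov inequality of the last paragraph: transferring the continuous differential inequality of Section \ref{sec:analysis} to the non-smooth, explicitly discretized setting requires controlling the interaction between the shrink/subgradient nonlinearity and the forward-Euler step, and it is the careful accounting of the resulting $O(\alpha)$ error — surfacing as the replacement of $\gamma$ by $\tilde\gamma$ and of the $\overline\tau$-deadlines by $\overline\tau-O(\alpha)$ — that makes the discrete statement strictly harder than its $\kappa\to\infty$, continuous-time counterparts.
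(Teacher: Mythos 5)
Your proposal follows essentially the same route as the paper: the telescoping identity for the $T$-block giving part A, the monotone decrease of $\|X_S(\beta_k-\tilde\beta^*_S)\|$ under $\kappa\alpha\|X_SX_S^\ast\|<2$ to secure the a-priori bound $B$, and a discrete Bihari-type Lyapunov inequality (the paper's Lemma \ref{lem:dbihari}) in which the forward-Euler overshoot is absorbed into the factor $(1-\kappa\alpha\|X_SX_S^\ast\|/2)$, followed by discrete stopping-time bounds with the $+3\alpha$ and $+2\alpha$ slack (Lemma \ref{lem:dstop}). The only cosmetic difference is that the paper's contraction is stated for the potential $\Psi_k=D(\tilde\beta^*,\beta_k)+\|\beta_k-\tilde\beta^*\|^2/(2\kappa)$ via the piecewise function $F$ rather than as a purely geometric decay of $\|X_S(\beta_{k,S}-\tilde\beta^*_S)\|_n$, but the mechanism you describe (testing against $\tilde\beta^*_S-\beta_{k,S}$ and using the subgradient inequality) is exactly the one used.
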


\begin{rem}
\begin{itemize}
\item[A.] Taking $\alpha \rightarrow 0$, we have $\tilde{\gamma} = \gamma$, and Theorem \ref{thm:LB-iss} for Linearized Bregman ISS follows.
\item[B.] The condition $\kappa \alpha\|X_SX_S^\ast\|<2$ is necessary to ensure the convergence of LB algorithm in the noiseless case. This condition also guarantees the monotonic descent of $\|X_S(\beta_{S,k}-\tilde{\beta}_S^*)\|$ before $\overline{\tau}$.
\end{itemize}
\end{rem}

\section{Analysis of ISS Dynamics} \label{sec:analysis}

The general idea to analyze differential inclusions in (\ref{eq:Bregman ISS}) and (\ref{eq:lb-iss}) is to associate these dynamics with some potential or Lyapunov functions, which control a fast convergence of solutions to the oracle estimator. When the solution path $\beta(t)$ evolves in the support set $S$, a suitable choice of potential functions should be expected with exponentially fast decay, which enables us to estimate the stopping time of reaching sign consistency and small $l_2$-error.

The difficulty lies in that ISS dynamics are differential inclusions, hence we exploit \emph{differential inequalities} of such a potential function to derive the bounds.

\subsection{Potential function}
One would like to study the dynamics of the following differential inclusion
\begin{subequations}
\begin{align}
 \dot\rho_t+\frac{1}{\kappa}\dot\beta_t&=-X^\ast X(\beta_t-\tilde{\beta}^*)\\
 \rho_t &\in \partial\|\beta_t\|_1,
\end{align}
\end{subequations}
where $\tilde{\beta}^*$ is the oracle estimator. Assuming the right continuity of solutions and multiplying both sides above by $\beta(t) - \tilde{\beta}^*$, one obtains a \emph{potential} or \emph{Lyapunov} function $\Psi:\R^p \to \R^+_0$ associated with the dynamics
\[  \frac{d}{dt}( \Psi(\beta_t) ) = - \frac{1}{n}\|X(\beta_t - \tilde{\beta}^*)\|_2^2,  \]
where
\begin{equation}
\Psi(\beta)= D(\tilde{\beta}^*,\beta)+\frac{\|\beta-\tilde{\beta}^*\|^2}{2\kappa}
\end{equation}
and $ D(\tilde{\beta}^*,\beta)$ is the Bregman distance
\begin{equation}
D_V(\tilde{\beta}^*,\beta):=V(\tilde{\beta}^*)-V(\beta) - \langle \partial V(\beta), \tilde{\beta}^*-\beta\rangle
\end{equation}
induced by the particular convex function $V(\beta) = \|\beta\|_1$. Note that $D(\tilde{\beta}^*,\beta)=\langle \tilde{\beta}^*, \tilde{\rho}-\rho\rangle$ where $\tilde{\rho}\in \partial \|\tilde{\beta}^*\|_1$ and $\rho\in \partial \|\beta\|_1$. Hence sign-consistency $\sign(\beta)=\sign(\tilde{\beta}^*)$ implies that $D(\tilde{\beta}^*,\beta)=0$.

As $n\ll p$, matrix $X$ has a large null-space, and to ensure the stationary point of the dynamics being the oracle solution, one must restrict the dynamics evolving outside the subspace $\ker(X)$.

\subsection{Differential inequality with restricted exponential decay of potential}
Define the following \emph{Oracle Dynamics} as if an oracle discloses the true variable set $S$ such that we restrict our attention on a subspace defined by $S$,
\begin{equation} \label{eq:odyn}
\dot\rho^\prime_S+ \frac{1}{\kappa}   \dot\beta^\prime_S = -X^\ast_S X_S( \beta^\prime_S - \tilde{\beta}^*_S), \ \ \ \rho^\prime_S(t) \in \partial \|\beta^\prime_S(t)\|_1.
\end{equation}
Here $X^\ast_S X_S$ is a $s\times s$ symmetric matrix satisfying the strong convexity $X^\ast_S X_S \geq \gamma I_s$, which will lead to exponentially fast decay of potential function.

To reach this goal, our key treatment here is a differential inequality associated differential inclusion in Oracle Dynamics which is tight enough to ensure the exponential decay of potential function. This is a Bihari's type \cite{Bihari56} nonlinear differential inequality, which generalizes the linear cases of Gr\"{o}nwall-Bellman inequalities \cite{Gronwall19,Bellman43}. In our treatment, a piecewise continuous bound is given which leads to the tight rates in this paper.

\begin{lem}[Generalized Bihari's Inequality] \label{lem:bihari}
The potential $\Psi$ of the Oracle Dynamics above satisfies the following differential inequality
\[ \frac{d}{dt}( \Psi(\beta^\prime_S) )  \leq -\gamma F^{-1}(\Psi(\beta^\prime_S)),\]
where $F^{-1}$ is the right-continuous inverse of the following strictly increasing function
\begin{equation} \label{eq:F}
F(x) = \frac{x}{2\kappa}+ \begin{cases} 0 & 0 \leq x < \tilde{\beta}^{2}_{min} \\
2x/\tilde{\beta}_{min} & \tilde{\beta}_{min}^{2} \leq x \leq s\tilde{\beta}_{min}^{2} \\
2\sqrt{xs} & x \geq s\tilde{\beta}_{min}^{2}.
\end{cases}
\end{equation}
\end{lem}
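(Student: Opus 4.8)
The plan is to first differentiate the potential along the Oracle Dynamics to obtain a dissipation estimate driven by restricted strong convexity, and then to convert that estimate into the stated differential inequality by bounding $\Psi$ \emph{above} by an explicit increasing function of $\|\beta^\prime_S-\tilde{\beta}^*_S\|^2$. Write $v := \beta^\prime_S-\tilde{\beta}^*_S$ and $\rho := \rho^\prime_S\in\partial\|\beta^\prime_S\|_1$ throughout. Using the assumed right continuity of $\beta^\prime_S$ and right continuous differentiability of $\rho$, I would differentiate $\Psi(\beta^\prime_S)=D(\tilde{\beta}^*_S,\beta^\prime_S)+\|v\|^2/(2\kappa)$: since $\frac{d}{dt}\|\beta^\prime_S\|_1=\langle\rho,\dot\beta^\prime_S\rangle$ for $\rho\in\partial\|\beta^\prime_S\|_1$, the $\ell_1$ terms in the Bregman distance telescope and leave $\frac{d}{dt}\Psi(\beta^\prime_S)=\langle\dot\rho+\tfrac1\kappa\dot\beta^\prime_S,\,v\rangle$. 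Substituting the Oracle Dynamics \eqref{eq:odyn} gives $\frac{d}{dt}\Psi(\beta^\prime_S)=-\langle X^\ast_S X_S v,\,v\rangle$, and invoking (A1) yields the dissipation bound $\frac{d}{dt}\Psi(\beta^\prime_S)\le -\gamma\|v\|^2$.

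Given this, the lemma reduces to the claim $\|v\|^2\ge F^{-1}(\Psi(\beta^\prime_S))$, since then $\frac{d}{dt}\Psi\le-\gamma\|v\|^2\le-\gamma F^{-1}(\Psi)$. Because $F$ is strictly increasing, this claim is in turn equivalent to the upper bound $\Psi(\beta^\prime_S)\le F(\|v\|^2)$. Now the quadratic part of $\Psi$ is exactly $\|v\|^2/(2\kappa)$, which matches the $x/(2\kappa)$ term of $F$ evaluated at $x=\|v\|^2$. Hence the whole problem collapses to bounding the Bregman part $D(\tilde{\beta}^*_S,\beta^\prime_S)$ by the piecewise term appearing in \eqref{eq:F}.

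The heart of the argument is this combinatorial estimate. Using $\langle\rho,\beta^\prime_S\rangle=\|\beta^\prime_S\|_1$, I would simplify $D(\tilde{\beta}^*_S,\beta^\prime_S)=\|\tilde{\beta}^*_S\|_1-\langle\rho,\tilde{\beta}^*_S\rangle=\sum_{i\in S}|\tilde{\beta}^*_i|\bigl(1-\rho_i\sign(\tilde{\beta}^*_i)\bigr)\ge 0$. Let $J$ collect the indices whose summand is nonzero. For $i\in J$ one has $\rho_i\ne\sign(\tilde{\beta}^*_i)$, and since $\rho_i\in\partial|\beta^\prime_i|$ this forces $\beta^\prime_i$ either to vanish or to carry the opposite sign; in both cases $|v_i|=|\beta^\prime_i-\tilde{\beta}^*_i|\ge|\tilde{\beta}^*_i|\ge\tilde{\beta}_{\min}$. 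Consequently $k:=|J|$ obeys both $k\tilde{\beta}_{\min}^2\le\|v\|^2$ and $k\le s$. Bounding each summand crudely by $2|\tilde{\beta}^*_i|\le 2|v_i|$ and applying Cauchy--Schwarz gives $D\le 2\sum_{i\in J}|v_i|\le 2\sqrt{k}\,\|v\|=2\sqrt{k\|v\|^2}$. Substituting the two constraints $k\le\min\{s,\ \|v\|^2/\tilde{\beta}_{\min}^2\}$ and splitting into the three regimes of $\|v\|^2$ reproduces exactly the piecewise bound in \eqref{eq:F}: when $\|v\|^2<\tilde{\beta}_{\min}^2$ the integrality $k\in\{0,1,\dots\}$ forces $k=0$ and hence $D=0$; on $[\tilde{\beta}_{\min}^2,s\tilde{\beta}_{\min}^2]$ it gives $D\le 2\|v\|^2/\tilde{\beta}_{\min}$; and for $\|v\|^2\ge s\tilde{\beta}_{\min}^2$ it gives $D\le 2\sqrt{s\|v\|^2}$. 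Thus $\Psi(\beta^\prime_S)\le F(\|v\|^2)$, and strict monotonicity of $F$ yields $F^{-1}(\Psi)\le\|v\|^2$, completing the argument.

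The step I expect to be the main obstacle is precisely this combinatorial bound in the third paragraph: the piecewise shape of $F$, and in particular its jump at $x=\tilde{\beta}_{\min}^2$ (which is why the \emph{right-continuous} inverse $F^{-1}$ is needed), emerges only from the interaction of the integer constraint $k\in\{0,1,\dots,s\}$ with the continuous bound $k\le\|v\|^2/\tilde{\beta}_{\min}^2$; care is required to sharpen the generic $\sqrt{k}$ estimate to the exact vanishing $D=0$ below $\tilde{\beta}_{\min}^2$ and to verify that $\Psi\le F(\|v\|^2)$ never lands in the jump gap in a way that would break $F^{-1}(\Psi)\le\|v\|^2$. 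A secondary technical point is justifying the chain rule for the nonsmooth convex function $V=\|\cdot\|_1$ along a path that is only right-differentiable, which is exactly what the standing regularity assumptions on $\rho_t$ and $\beta_t$ are there to guarantee.
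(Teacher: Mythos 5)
Your argument is correct and follows essentially the same route as the paper's proof: differentiate $\Psi$ along the dynamics using $\langle\rho,\beta'_S\rangle=\|\beta'_S\|_1$ to get $\frac{d}{dt}\Psi=-\langle v,X_S^\ast X_S v\rangle\le-\gamma\|v\|^2$, then reduce to $\Psi\le F(\|v\|^2)$ via the sign-disagreement set on which $|v_i|\ge|\tilde\beta^*_i|\ge\tilde\beta_{\min}$. The only (cosmetic) difference is that you control $D$ through the cardinality $k$ of that set plus Cauchy--Schwarz on the $|v_i|$, whereas the paper lower-bounds $\|v\|^2$ directly by $\max\{\tilde\beta_{\min}\sum|\tilde\beta^*_i|,(\sum|\tilde\beta^*_i|)^2/s\}$; both yield the identical piecewise bound.
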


\begin{rem}
An early analysis of ISS convergence in \cite{BRH07} found the following rate on Bregman distance, $D(\tilde{\beta}^*,\beta_t) \leq O( t^{-1})$. In fact, this rate can be derived from the third bound above, which leads to
\[ \frac{d}{dt} D(\tilde{\beta}^*, \beta_t) \leq - \frac{\gamma}{2s} D(\tilde{\beta}^*,\beta_t)^2,  \ \ \ \kappa\to \infty,\]
when $\beta_t$ evolves on $S$. However such a rate is not fast enough to tight bounds on sign-consistency, which requires $D(\tilde{\beta^*},\beta_t)=0$ for some $t\leq \bar{\tau}$. The key treatment in the piecewise bounds above lies in the second bound, which gives
\[  \frac{d}{dt} D(\tilde{\beta}^*, \beta_t) \leq - \frac{\gamma\tilde{\beta}_{min}}{2} D(\tilde{\beta}^*,\beta_t), \ \ \ \kappa\to \infty,\]
and hence an exponential decay of Bregman distance. As we shall see in the proof, such a fast rate is crucial to ensuring all the strong signals selected before wrong components. Therefore one can achieve the tight stopping rules below for sign-consistency under nearly the same conditions as LASSO.
\end{rem}


Such an inequality ensures a decrease of the potential function at a fast enough speed which leads to the following tight estimates on stopping time.


We are concerned with the following stopping time reaching sign-consistency and $l_2$-consistency of Oracle Dynamics, respectively.
Define
\begin{equation}
\tilde{\tau}_{1}:=\inf\{t>0: \sign(\beta^\prime_S)=\sign(\tilde{\beta}_S^*)\},
\end{equation}
\begin{equation}
\tilde{\tau}_{2}(C):=\inf\left\{t>0: ||\beta_S^\prime-\tilde{\beta}_S^*||_{2} \leq C\sqrt{\frac{s\log{p}}{n}}\right\}.
\end{equation}

Equipped with the generalized Bihari's inequality, one can build up the following bounds for stopping time on sign-consistency and $l_2$-consistency, respectively.

\begin{lem} \label{lem:stopping}
The following bounds hold for the Oracle Dynamics (\ref{eq:odyn})
\[\tilde{\tau}_{1} \leq \frac{4+2\log{s}}{\gamma\tilde{\beta}^*_{min}}+\frac{1}{\kappa\gamma}\log(\frac{\|\tilde{\beta}^*\|_{2}}{\tilde{\beta}^*_{min}}), \]
\[\tilde{\tau}_{2}(C) \leq \frac{4}{C\gamma}\sqrt{\frac{n}{\log p}}+\frac{1}{2\kappa\gamma}(1+\log{\frac{n\|\tilde{\beta}^*\|^{2}_{2}}{C^{2}s\log{p}}}).\]
\end{lem}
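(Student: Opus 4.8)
The plan is to collapse the differential inclusion onto the scalar potential $\Psi$ and then read each stopping time off by integration. Write $\psi(t):=\Psi(\beta'_S(t))$ and $r(t):=\|\beta'_S(t)-\tilde{\beta}^*_S\|_2^2$. From the energy identity $\frac{d}{dt}\Psi=-\|X_S(\beta'_S-\tilde{\beta}^*_S)\|_n^2$ and (A1) one has the two companion facts $\dot\psi\le-\gamma r$ and $\psi\ge r/(2\kappa)$, while Lemma~\ref{lem:bihari} supplies $\dot\psi\le-\gamma F^{-1}(\psi)$. First I would record the initial data: with $\beta'_S(0)=0$ one gets $r(0)=\|\tilde{\beta}^*\|_2^2$ and $\psi(0)=\|\tilde{\beta}^*\|_1+\|\tilde{\beta}^*\|_2^2/(2\kappa)$, and since Cauchy--Schwarz gives $\|\tilde{\beta}^*\|_1\le\sqrt{s}\,\|\tilde{\beta}^*\|_2\le 2\sqrt{s}\,\|\tilde{\beta}^*\|_2=g(\|\tilde{\beta}^*\|_2^2)$ (with $g$ the bracketed part of \eqref{eq:F}), we have $F(\|\tilde{\beta}^*\|_2^2)\ge\psi(0)$, hence $x_0:=F^{-1}(\psi(0))\le\|\tilde{\beta}^*\|_2^2$.

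The core is a two--phase estimate for the time to reach an arbitrary target $r\le x^*$. In Phase~1, while $\psi\ge F(x^*)$ (i.e. $F^{-1}(\psi)\ge x^*$) the Bihari inequality governs the decay; separating variables through the substitution $x=F^{-1}(\psi)$, $\psi=F(x)$, turns $\dot\psi\le-\gamma F^{-1}(\psi)$ into $\gamma\,dt\le \frac{F'(x)}{x}\,dx$, so the time to drive $\psi$ down to $F(x^*)$ is at most $\frac1\gamma\int_{x^*}^{x_0}\frac{F'(x)}{x}\,dx$. In Phase~2, suppose for contradiction $r>x^*$ persists; then $\dot\psi\le-\gamma r<-\gamma x^*$ while simultaneously $\psi\ge r/(2\kappa)>x^*/(2\kappa)$, so $\psi$ must fall from $F(x^*)$ to the floor $x^*/(2\kappa)$ within an extra time at most $(F(x^*)-x^*/(2\kappa))/(\gamma x^*)=g(x^*)/(\gamma x^*)$, at which instant $\psi=x^*/(2\kappa)$ contradicts $\psi>x^*/(2\kappa)$. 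Thus $r\le x^*$ is reached by time $\frac1\gamma\int_{x^*}^{x_0}\frac{F'(x)}{x}\,dx+\frac{g(x^*)}{\gamma x^*}$.

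It then remains to specialize. For sign--consistency I take $x^*=\tilde{\beta}_{\min}^2$: once $r<\tilde{\beta}_{\min}^2$ no coordinate $\beta'_i$ can cross $0$ (each $|\tilde{\beta}^*_i|\ge\tilde{\beta}_{\min}$), so $\sign(\beta'_S)=\sign(\tilde{\beta}^*_S)$. Splitting $\int_{\tilde{\beta}_{\min}^2}^{x_0}\frac{F'}{x}$ at the knot $s\tilde{\beta}_{\min}^2$ and using $x_0\le\|\tilde{\beta}^*\|_2^2$ produces the term $\frac1{\kappa\gamma}\log(\|\tilde{\beta}^*\|_2/\tilde{\beta}_{\min})$ together with $(2+2\log s)/(\gamma\tilde{\beta}_{\min})$ from the middle and outer branches, while the Phase-2 term is $g(\tilde{\beta}_{\min}^2)/(\gamma\tilde{\beta}_{\min}^2)=2/(\gamma\tilde{\beta}_{\min})$; these sum to exactly the stated bound on $\tilde\tau_1$. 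For $x^*=C^2 s\log p/n$ I replace $F$ by its global majorant $\hat F(x)=x/(2\kappa)+2\sqrt{xs}\ge F(x)$ (valid because $2x/\tilde{\beta}_{\min}\le 2\sqrt{xs}$ on the middle branch), so $\hat F^{-1}\le F^{-1}$ preserves the inequality; here $\int_{x^*}^{x_0}\frac{\hat F'}{x}$ contributes $\frac2C\sqrt{n/\log p}$ from the $2\sqrt{xs}$ part and $\frac1{2\kappa}\log\frac{n\|\tilde{\beta}^*\|_2^2}{C^2 s\log p}$ from the $x/(2\kappa)$ part, and the Phase-2 term $g(x^*)/x^*=2\sqrt{s/x^*}$ contributes a second $\frac2C\sqrt{n/\log p}$, giving $\frac4{C\gamma}\sqrt{n/\log p}$ as required (the harmless additive $1/(2\kappa\gamma)$ absorbed as slack).

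The main obstacle is the discontinuity of $F$ at the knot $x=\tilde{\beta}_{\min}^2$, which is exactly why $F^{-1}$ must be taken right--continuous and why reaching $F^{-1}(\psi)=x^*$ does \emph{not} by itself certify $r\le x^*$. The delicate point is the Phase-2 argument: one must use both $\dot\psi\le-\gamma r$ and the quadratic floor $\psi\ge r/(2\kappa)$ to cross the gap of height $g(x^*)$ down to $\psi=x^*/(2\kappa)$, and it is precisely this crossing that upgrades the leading constant from $2$ to $4$ in the $l_2$ bound and the additive $+2$ to $+4$ in the sign bound. Carrying the piecewise constants faithfully through the integration is where all the care lies.
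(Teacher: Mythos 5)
Your proof is correct and follows essentially the same route as the paper: the same potential $\Psi$, the same generalized Bihari inequality, the change of variables $\psi=F(x)$ integrated piecewise across the branches of $F$, and the majorant $\hat F(x)=x/(2\kappa)+2\sqrt{xs}$ for the $\tilde\tau_2$ bound. Your explicit ``Phase-2'' crossing argument is exactly the paper's first integral $\int_{x^*/(2\kappa)}^{F(x^*)}dx/F^{-1}(x)$ over the jump of $F$, evaluated with the right-continuous inverse, so the two accounts of where the extra $2/(\gamma\tilde\beta_{\min})$ (resp. the second $\tfrac{2}{C\gamma}\sqrt{n/\log p}$) comes from coincide.
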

%

\begin{rem}
\begin{itemize}
\item[A.] $\tilde{\tau}_{1} \leq O(\log s/\beta^*_{\min})$ says that $\beta(t)$ will reach sign-consistency after $t\geq O(\log s/{\tilde{\beta}_{\min}})$. The factor $\log s$ is due to the potential method above which converts a multidimensional dynamics into a one-dimensional differential inequality, and dropping potential exponentially from at least $\|\tilde{\beta}_S^*\|_1 \ge s \tilde{\beta}^*_{\min}$ to $0$ requires necessarily the $O(\log s)$ time.
\item[B.] $\tilde{\tau}_{2}(C) \leq O(\frac{1}{C}\sqrt{\frac{n}{p}})$ says that $l_2$-consistency can be reached before $\overline{\tau} = O(\sqrt{\frac{n}{p}})$ as long as $C$ is a sufficiently large constant.
\end{itemize}
\end{rem}

\subsection{Sign-consistency and $l_2$-error bound}

Now we are ready to reach the sign-consistency and $l_2$-error bound for $\beta(t)$ by setting $\tilde{\tau}_1\leq \bar{\tau}$ and $\tilde{\tau}_2(C)\leq \bar{\tau}$, respectively. In these cases, Oracle Dyanmics (\ref{eq:odyn}) $\beta^\prime_S(t)$ meets the original path $\beta_S(t)$ when restricted on $S$. The complete proofs of Theorem \ref{thm:LB-iss} and its discrete version of Theorem \ref{thm:LB} will be found in Appendix A, together with their supporting lemmas.

%

\section{Data-dependent Stopping Rules for Bregman ISS} \label{sec:adaptive}
All the previous results enable us to select $\bar{\tau}$ as a stopping time which however depends on unknown parameters $\gamma$, $\eta$, and noise level $\sigma$, hence is not a data-dependent stopping rule. In this section we present two preliminary results with early stopping rules comparable to \cite{CaiWan11}, which only depend on the noise level $\sigma$ and thus can be estimated from data. We leave it our future work to explore fully adaptive stopping rules.

In the following, define the residue $r(t):=y-X \beta(t)$. The first theorem adopts the stopping rule based on $\|r(t)\|_2$ and the second theorem is based on $\|Xr(t)\|_\infty$.

\begin{thm}\label{thm:stop1}
Suppose
\[\beta^{*}_{\min}\geq\left(\frac{4 \sigma}{\gamma^{1/2}} \vee\frac{8\sigma(2+\log{s})\left (\max_{j\in T} \|X_j\|_n\right )}{\gamma\eta}\right) \sqrt{\frac{\log{p}}{n}},\]
and
\[ \beta^*_{\min} \ge \frac{2\sigma}{\sqrt{\gamma}} \left(\sqrt{1+2\sqrt{\frac{\log n}{n}}}+\sqrt{\frac{\log s}{n}}\right) .\]
Then Bregman ISS with the stopping rule $\|r(t)\|_2 \leq  \sigma \sqrt{n+2\sqrt{n\log{n}}} $ selects the true subset $S$ with probability at least $1-O(1/n)$.
\end{thm}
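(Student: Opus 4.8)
The plan is to show that the data-dependent stopping time
\[ \tau^\ast := \inf\Big\{t\ge 0 : \|r(t)\|_2 \le \sigma\sqrt{n+2\sqrt{n\log n}}\,\Big\} \]
satisfies $\tau^\ast\le\bar\tau$ and $\supp(\beta_{\tau^\ast})=S$, where $\bar\tau$ is the oracle time \eqref{eq:taubar1}. The first hypothesis on $\beta^\ast_{\min}$ is exactly the strong-signal condition of Theorem \ref{thm:Bregman ISS}, so on an event of probability at least $1-\frac{2}{p\sqrt{\pi\log p}}$ the path has no false positive on $[0,\bar\tau]$, i.e.\ $\supp(\beta_t)\subseteq S$, and reaches sign-consistency at $\bar\tau$, so that \eqref{iss-sub} returns $\beta_{\bar\tau}=\tilde\beta^\ast$. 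It then suffices to prove two one-sided residual estimates: (i) the residual already lies below the threshold at $\bar\tau$, which forces $\tau^\ast\le\bar\tau$ and hence $\supp(\beta_{\tau^\ast})\subseteq S$; and (ii) the residual strictly exceeds the threshold whenever a true variable is still missing, which forces $\supp(\beta_{\tau^\ast})=S$.

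For (i), on the above event $r(\bar\tau)=y-X_S\tilde\beta^\ast_S=(I-P_S)\epsilon$ with $P_S=X_S(X_S^\ast X_S)^{-1}X_S^\ast$, so $\|r(\bar\tau)\|_2\le\|\epsilon\|_2$. Since $\|\epsilon\|_2^2\sim\sigma^2\chi^2_n$, a chi-square tail bound gives $\|\epsilon\|_2^2\le\sigma^2(n+2\sqrt{n\log n})$ with probability $1-O(1/n)$; the threshold is calibrated for exactly this. Thus the stopping set is nonempty by time $\bar\tau$, giving $\tau^\ast\le\bar\tau$, and no-false-positive yields $\supp(\beta_{\tau^\ast})\subseteq S$.

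For (ii), fix $t\le\bar\tau$ with $S_t:=\supp(\beta_t)\subsetneq S$ and let $k:=|S\setminus S_t|\ge1$. Since $\beta_t$ solves the sign-constrained least squares \eqref{iss-sub} on $S_t$, its residual dominates the projection residual, $\|r(t)\|_2\ge\|(I-P_{S_t})y\|_2$, where $P_{S_t}$ projects onto $\im(X_{S_t})$. Using $P_{S_t}P_S=P_{S_t}$ one obtains the orthogonal split
\[ \|(I-P_{S_t})y\|_2^2=\big\|(I-P_{S_t})X_S\beta^\ast_S+(P_S-P_{S_t})\epsilon\big\|_2^2+\|(I-P_S)\epsilon\|_2^2 . \]
Restricted strong convexity (A1) bounds the signal term from below: since $\|X_S\theta\|_2^2\ge n\gamma\|\theta\|_2^2$, minimizing over $\theta$ with $\theta_{S\setminus S_t}=\beta^\ast_{S\setminus S_t}$ (and $\theta_{S_t}$ free) gives $\|(I-P_{S_t})X_S\beta^\ast_S\|_2\ge\sqrt{n\gamma}\,\|\beta^\ast_{S\setminus S_t}\|_2\ge\sqrt{n\gamma k}\,\beta^\ast_{\min}$. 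The noise term $(P_S-P_{S_t})\epsilon$ lies in a $k$-dimensional subspace of $\im(X_S)$, and a union bound over the deletion patterns (the binding case $k=1$ involving only $s$ single-deletion subspaces, each with a $\chi^2_1$ tail) bounds it by $O(\sigma\sqrt{k\log s})$ uniformly. Discarding the nonnegative last term and applying the reverse triangle inequality,
\[ \|r(t)\|_2\ge\sqrt{k}\,\Big(\sqrt{n\gamma}\,\beta^\ast_{\min}-O(\sigma\sqrt{\log s})\Big), \]
which is increasing in $k$ and therefore minimized at $k=1$. The second hypothesis $\beta^\ast_{\min}\ge\frac{2\sigma}{\sqrt\gamma}\big(\sqrt{1+2\sqrt{\log n/n}}+\sqrt{\log s/n}\big)$ is precisely what makes the right-hand side exceed $\sigma\sqrt{n+2\sqrt{n\log n}}$, so the threshold cannot be reached while a true variable is absent, whence $\supp(\beta_{\tau^\ast})=S$.

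The main obstacle is estimate (ii), and inside it the uniform control of $\|(P_S-P_{S_t})\epsilon\|_2$ over every support set the random path might visit. A crude union bound over all $2^s$ subsets would cost a $\sqrt{s}$ correction, far more than the $\sqrt{\log s/n}$ budget in the hypothesis allows; the observation that rescues the argument is that deleting $k$ variables inflates the noise by only $O(\sigma\sqrt{k\log s})$ while removing signal of size $\sqrt{k}\,\sqrt{n\gamma}\,\beta^\ast_{\min}$, so the margin per missing coordinate is independent of $k$ and the worst case is a single missing variable, for which the union runs over just $s$ subspaces and genuinely costs $\sqrt{\log s}$. Collecting the three events---Theorem \ref{thm:Bregman ISS}, the chi-square bound in (i), and the uniform noise bound in (ii)---a final union bound gives overall probability $1-O(1/n)$.
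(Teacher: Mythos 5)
Your proposal is correct and reaches the conclusion by the same two-sided strategy the paper uses (threshold is met once sign consistency holds; threshold cannot be met while a true variable is missing), and your part (i) coincides with the paper's: $r(\bar\tau)=(I-P_S)\epsilon$, so $\|r(\bar\tau)\|_2\le\|\epsilon\|_2\le\sigma\sqrt{n+2\sqrt{n\log n}}$ by the $\chi^2_n$ tail bound. Where you genuinely diverge is the lower bound in part (ii). You lower-bound $\|r(t)\|_2$ by the unconstrained projection residual $\|(I-P_{S_t})y\|_2$, which makes the bound depend on the random support $S_t$, and you are then forced into a union bound over all $\binom{s}{k}$ deletion patterns, rescued by the observation that the signal loss scales like $\sqrt{k}\sqrt{n\gamma}\,\beta^*_{\min}$ while the noise inflation is only $O(\sigma\sqrt{k\log s})$, so $k=1$ is the worst case. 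The paper avoids this entirely: it centers the residual at the \emph{oracle estimator} via the orthogonal decomposition of Lemma \ref{lem:residue}, $\|r_t\|^2=\|X_S(\tilde\beta^*_S-\beta_S(t))\|^2+\|P_T\epsilon\|^2$, so that a single missing coordinate $i\in S$ with $\beta_i(t)=0$ immediately gives $\|\tilde\beta^*_S-\beta_S(t)\|_2\ge|\tilde\beta^*_i|\ge\tilde\beta^*_{\min}$ and hence $\|r_t\|\ge\sqrt{n\gamma}\,\tilde\beta^*_{\min}$, deterministically and uniformly in $S_t$; the noise is then handled once through the coordinatewise bound $\tilde\beta^*_{\min}\ge\beta^*_{\min}-\|(X_S^\ast X_S)^{-1}X_S^\ast\epsilon\|_\infty$, a union over only $s$ coordinates, which is exactly where the $\sqrt{\log s/n}$ term in the second hypothesis comes from. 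The "main obstacle" you identify — uniform control of $\|(P_S-P_{S_t})\epsilon\|_2$ over visited supports — is thus an artifact of your decomposition rather than intrinsic to the theorem; your resolution of it is sound but costs an extra $\sqrt{\log n}$-type slack if you insist on overall failure probability $O(1/n)$, slack that the stated constants absorb only because the dominant term in the hypothesis is the constant $2\sigma/\sqrt{\gamma}$. Your version does have the minor merit of not invoking the monotonicity statement of Lemma \ref{lem:residue}, needing only $\|r(t)\|_2\ge\|(I-P_{S_t})y\|_2$.
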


\begin{rem}
\begin{itemize}
\item This result is comparable to Theorem 7 in \cite{CaiWan11}. 
\item The first condition on the minimum of magnitude of signals ensures the model selection consistency of the Bregman ISS path and thus indicates that one can find some $t$ along the path so that the residual term satisfies $\|r(t)\|_2 \leq  \sigma \sqrt{n+2\sqrt{n\log{n}}}$. Once the path achieves sign consistency, the Bregman ISS must stop.
\item The second condition $\beta^*_{\min} \ge \frac{2\sigma}{\sqrt{\gamma}} \left(\sqrt{1+2\sqrt{\frac{\log n}{n}}}+\sqrt{\frac{\log s}{n}}\right)$ guarantees that one can not stop earlier before Bregman ISS achieves a full recovery. Note that as $n\to \infty$, one needs $\beta^*_{\min} \geq 2\sigma/\sqrt{\gamma}$ which is a constant.
\end{itemize}
\end{rem}

\begin{thm}\label{thm:stop2}
In addition to \eqref{eq:ISS_condition}, suppose
\[\beta^*_{\min}\ge  \frac{2\sigma \max_i \|X_i\|_n\sqrt{2(1+c)s\log p}}{\sqrt{n}\gamma} + 2\sigma\sqrt{\frac{\log s}{n\gamma}}.\]
Then Bregman ISS with the stopping rule $\|X^T r(t)\|_\infty \leq  2\sigma \sqrt{\max_i \|X_i\|\log p} $ ($\delta>0$) selects the true subset $S$ with probability at least $1-O(1/p+1/n)$.
\end{thm}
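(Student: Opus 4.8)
The plan is to sandwich the halting time from both sides through the correlation vector $X^T r(t)=n\dot\rho_t$, mirroring the structure of Theorem \ref{thm:stop1}. Condition \eqref{eq:ISS_condition} is exactly the hypothesis of Theorem \ref{thm:Bregman ISS}, so with probability at least $1-\frac{2}{p\sqrt{\pi\log p}}$ the path stays inside $S$ up to $\bar{\tau}$ (no-false-positive) and reaches $\sign(\beta_{\bar{\tau}})=\sign(\beta^*)$ (sign-consistency); in particular $\supp(\beta_t)\subseteq S$ for all $t\le\bar{\tau}$ and $\beta_{\bar{\tau}}=\tilde{\beta}^*$. On this event it suffices to show two things: (i) the stopping rule is met at $t=\bar{\tau}$, so the dynamics halts no later than $\bar{\tau}$; and (ii) the stopping rule is violated at every $t\le\bar{\tau}$ with $\supp(\beta_t)\subsetneq S$, so the dynamics cannot halt while a true signal is still missing. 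Since no-false-positive forbids the path from ever leaving $S$, (i) and (ii) together force the halting time $\hat{t}$ to satisfy $\supp(\beta_{\hat t})=S$, which is the claim.

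For (i), sign-consistency gives $\beta_{\bar{\tau}}=\tilde{\beta}^*$ and hence $r(\bar{\tau})=y-X_S\tilde{\beta}^*_S=P_{S^\perp}y=P_{S^\perp}\epsilon$, where $P_{S^\perp}=I-X_S(X_S^TX_S)^{-1}X_S^T$. Thus $(X^T r(\bar{\tau}))_i=0$ for $i\in S$, while for $j\in T$ the entry $X_j^T P_{S^\perp}\epsilon$ is centered Gaussian with variance $\sigma^2\|P_{S^\perp}X_j\|^2\le\sigma^2\|X_j\|^2$. A Gaussian maximal inequality over the at most $p$ coordinates then bounds $\|X^T r(\bar{\tau})\|_\infty$ by the stopping threshold with probability at least $1-O(1/p)$.

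For (ii), fix $t\le\bar{\tau}$ on a constant-$\beta$ segment with active set $U=\supp(\beta_t)\subsetneq S$ and write $V=S\setminus U\ne\emptyset$. Because every active coordinate has $\rho_t(i)=\sign\beta_t(i)=\pm1$ held constant, $\dot\rho_t$ vanishes on $U$, whence $X_U^T r(t)=n\dot\rho_{t,U}=0$ and therefore $r(t)=P_{U^\perp}y=P_{U^\perp}X_V\beta^*_V+P_{U^\perp}\epsilon$. Restricting to the missing coordinates, $X_V^T r(t)=M\beta^*_V+X_V^T P_{U^\perp}\epsilon$ with $M:=X_V^T P_{U^\perp}X_V$. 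Since $v^T M v=\min_w\|X_V v-X_U w\|_2^2\ge n\gamma\|v\|_2^2$ by (A1) (as $X_S^T X_S\ge n\gamma I$), the Schur complement $M$ satisfies $M\ge n\gamma I_{|V|}$, so $\|M\beta^*_V\|_2\ge n\gamma\|\beta^*_V\|_2\ge n\gamma\sqrt{|V|}\,\beta^*_{\min}$ and
\[ \|X^T r(t)\|_\infty \ge \|X_V^T r(t)\|_\infty \ge \frac{1}{\sqrt{|V|}}\|X_V^T r(t)\|_2 \ge n\gamma\,\beta^*_{\min}-\frac{1}{\sqrt{|V|}}\|X_V^T P_{U^\perp}\epsilon\|_2. \]
The second hypothesis on $\beta^*_{\min}$ is calibrated precisely so that the leading term $n\gamma\,\beta^*_{\min}$ strictly exceeds the stopping threshold plus the noise term; hence the rule is violated and the dynamics does not stop.

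The main obstacle is the noise term $\frac{1}{\sqrt{|V|}}\|X_V^T P_{U^\perp}\epsilon\|_2\le\|X_V^T P_{U^\perp}\epsilon\|_\infty$: the active set $U$, and hence the projection $P_{U^\perp}$, is itself a random function of $\epsilon$, so $X_j^T P_{U^\perp}\epsilon$ cannot be treated as a fixed Gaussian. The remedy is a union bound over all $2^{|S|}$ candidate active sets $U\subseteq S$. For each fixed $U$, $X_j^T P_{U^\perp}\epsilon$ is $\NN(0,\le\sigma^2\|X_j\|^2)$, and a maximal inequality over the at most $s\,2^s$ pairs $(U,j)$ yields $\max_{U,j}|X_j^T P_{U^\perp}\epsilon|\le\sigma\max_i\|X_i\|\sqrt{2(1+c)s\log p}$ with probability $1-O(1/p)$; the extra $\sqrt{s}$ relative to part (i) is exactly the cost of this uniformity and is what the $\sqrt{s\log p}$ factor in the hypothesis on $\beta^*_{\min}$ pays for. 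Collecting the failure probabilities of no-false-positive, sign-consistency, the part-(i) Gaussian bound, and this uniform bound through a final union bound gives the stated $1-O(1/p+1/n)$.
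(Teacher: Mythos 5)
Your proposal is correct, and its overall architecture --- invoke sign-consistency from \eqref{eq:ISS_condition}, show the rule fires at the sign-consistent point, show it cannot fire while a true coordinate is still missing --- matches the paper's. The upper-bound half (part (i)) is essentially the paper's argument. The lower-bound half, however, takes a genuinely different route. The paper uses the orthogonal decomposition of Lemma \ref{lem:residue}, $r_t = X_S(\tilde{\beta}^*_S-\beta_S(t))+(I-P_S)\epsilon$, so that applying $X_S^T$ annihilates the noise term (since $X_S^T(I-P_S)=0$) and leaves the purely deterministic chain $\|X^T r_t\|_\infty \ge \|X_S^TX_S(\tilde{\beta}^*_S-\beta_S(t))\|_\infty\ge \frac{n\gamma}{\sqrt{s}}\,\tilde{\beta}^*_{\min}$; the only residual randomness sits in $\tilde{\beta}^*_{\min}$ and is controlled by a single maximal inequality over $s$ coordinates, which is exactly where the $2\sigma\sqrt{\log s/(n\gamma)}$ term in the hypothesis comes from. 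You instead decompose with respect to the current active set $U=\supp(\beta_t)$, lower-bound the Schur complement $X_V^TP_{U^\perp}X_V\ge n\gamma I$, and pay for the data-dependence of $U$ with a union bound over the $2^{|S|}$ candidate subsets. Both arguments end up losing the same $\sqrt{s}$ factor --- the paper in the signal term via $\|\cdot\|_\infty\ge s^{-1/2}\|\cdot\|_2$, you in the noise term via the uniformity over subsets --- so the resulting conditions on $\beta^*_{\min}$ are of the same order. The paper's route is shorter and avoids the exponential union bound; yours is more self-contained (it needs neither Lemma \ref{lem:residue} nor the monotonicity of the residual) and is more explicit than the paper about why the randomness of the projection $P_{U^\perp}$ is harmless, a point the paper's own write-up glosses over.
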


\begin{rem}
This result is comparable to Theorem 8 in \cite{CaiWan11}, though the lower bound $\beta^*_{\min} \geq O(\sigma \sqrt{s\log p/n})$ loses a factor $\sqrt{s}$ here. As $n\to \infty$, the lower bound can be arbitrarily small.
\end{rem}

The remaining of this section presents the proofs of the theorems above.

\begin{proof}[Proof of Theorem \ref{thm:stop1}]
Lemma 3 in \cite{CaiWan11} or Lemma 5.2 in \cite{CaiXuZha09} shows that with probability at least $1-1/n$, $\epsilon$ is essentially $l_2$ upper bounded
\[  \|\epsilon\|_2\leq \sigma \sqrt{n+2\sqrt{n\log{n}}}. \]
Hence with the same probability,
\[ \|r(\tau^*)\|=\|(I-X_S(X_S^*X_S)^{-1}X_S)\epsilon\|_2 \leq \|\epsilon\|_2 \leq \sigma \sqrt{n+2\sqrt{n\log{n}}} \]
We have now shown that the Bregman ISS stops once the path acheives sign consistency.

Next we are going to show that the algorithm will not stop whenever there is some $i\in S$ such that $\beta_i(t)=0$. By Lemma \ref{lem:residue},
\begin{eqnarray*}
\|r_t\| &\ge &  \| X_S(\tilde{\beta}^*_S -\beta_S(t))  \| \\
& \ge & \sqrt{n \gamma} \| \tilde{\beta}^*_S - \beta_S(t) \| \\
& \ge & \sqrt{n\gamma} \tilde{\beta}^*_{\min} \\
& \ge & 2\sigma \sqrt{n+2\sqrt{n\log{n}}}
\end{eqnarray*}
provided that $\tilde{\beta}^*_{\min} \ge 2 \sigma \sqrt{\frac{1+2\sqrt{\log n/n}}{\gamma}}$. Note that
\[ \| (X_S^* X_S)^{-1} X_S^* \varepsilon \|_\infty \le 2\sigma\sqrt{\frac{\log s}{n\gamma}}, \ \ \ \mbox{w. p. at least $1-2n^{-1}$}, \]
so it suffices to have $\beta^*_{\min} \ge \frac{2\sigma (\sqrt{n+2\sqrt{n\log n}}+\sqrt{\log s})}{\sqrt{n\gamma}}$.
\end{proof}

\begin{proof}[Proof of Theorem \ref{thm:stop2}] By assumptions
\[\beta^{*}_{\min}\geq\left(\frac{4 \sigma}{\gamma^{1/2}} \vee\frac{8\sigma(2+\log{s})\left (\max_{j\in T} \|X_j\|_n\right )}{\gamma\eta}\right) \sqrt{\frac{\log{p}}{n}}.\]
Hence, according to Theorem \ref{thm:LB}, the Bregman ISS achieves the sign consistency with high probability. Assume that at time $\tau^*$, $\beta(\tau^*)$ has the same sign as the underlying sparse signal $\beta$.
For each $t$,
\[ r_t = (I  - X_{S(t)} (X^*_{S(t)} X_{S(t)})^{-1} X^*_{S(t)}) (X_S\beta_S + \epsilon) = s_t+n_t , \]
where $s_t = (I-P_{S(t)})X_S\beta_S$ is the signal part of the residual and $n_t = (I-P_{S(t)})\epsilon$ is the noise part of the residual. Then $r_{\tau^*} = n_{\tau^*}$. Let $b_\infty = \sigma \sqrt{2(1+c)\max_i \|X_i\|\log p}$.
\begin{eqnarray*}
\Pr (\| X^T n_t\|_\infty = \|X^T (I-P_t)\epsilon\|_\infty \ge b_\infty )&\le& \sum_i \Pr ( |X_i^T (I-P_t) \epsilon| \ge b_\infty ) \\
&\le& \sum_i \Pr( |X_i^T \epsilon| \ge b_\infty ) \\
&\le& \frac{2}{p^c\sqrt{2\log p}},
\end{eqnarray*}
which means the algorithm stops at $\tau^*$.

Next we are going to show that the algorithm will not stop whenever there is some $i\in A_t \subseteq S$ such that $\beta_i(t)=0$. By Lemma \ref{lem:residue},
\begin{eqnarray*}
\|X^T r_t\|_\infty &= &  \| X^T [ X_S(\tilde{\beta}^*_S -\beta_S(t)) +(I - P_S) \epsilon  ]\|_\infty, \\
& \ge &  \| X_{S}^T [ X_S(\tilde{\beta}^*_S -\beta_S(t))+ (I - P_S) \epsilon] \|_\infty, \\
& = & \| X_{S}^T X_S(\tilde{\beta}^*_S -\beta_S(t))\|_\infty, \ \ X_{S}^T (I-P_S) \epsilon =0, \\
& \ge & \frac{1}{\sqrt{s}} \| X_{S}^T  X_S(\tilde{\beta}^*_S -\beta_S(t)) \|_2 , \\
& \ge & \frac{n\gamma}{\sqrt{s}}\|\tilde{\beta}^*_S -\beta_S(t)\|_2, \\
& \ge & \frac{n\gamma}{\sqrt{s}} \tilde{\beta}^*_{\min}  \geq b_\infty,
\end{eqnarray*}
provided that $\tilde{\beta}^*_{\min} \ge \frac{\sqrt{s} b_{\infty}}{n \gamma} $. Note that
\[ \| (X_S^* X_S)^{-1} X_S^* \varepsilon \|_\infty \le 2\sigma\sqrt{\frac{\log s}{n\gamma}}, \ \ \ \mbox{w. p. at least $1-2n^{-1}$}, \]
so it suffices to have
\[ \beta^*_{\min} \ge b_\infty +2\sigma\sqrt{\frac{\log s}{n\gamma}} = \frac{\sigma (\max_i \frac{\|X_i\|}{\sqrt{n}} \sqrt{2(1+c) s \log p/\gamma}+\sqrt{\log s})}{\sqrt{n\gamma}} \]
with probability at least $1-O(p^{-1}+n^{-1})$.
%
%
\end{proof}

\section{Related work} \label{sec:related}

 \subsection{Regularization and other algorithms}
For general penalized least square problems, \cite{FanLi01} has shown that no convex penalty functions can fully achieve the \emph{oracle properties} and thus one has to resort to non-convex regularization, whose global minimizer is, however, algorithmically difficult to locate. Alternatively, one can apply LASSO for variable selection and then remove the bias in LASSO by solving a subset least squares in the second stage. On the other hand, \cite{OBGXY05} noticed that Bregman iteration may reduce bias, also known as contrast loss, in the context of Total Variation image denoising. In this paper, we shall see that dynamics \eqref{eq:Bregman ISS} can automatically remove bias without any non-convexity or second-stage subset least squares. It is a different kind of regularization via early stopping.

Early stopping regularization has been studied widely in linear inverse problems, e.g. \cite{EngHanNeu96}, and recently in Boosting, e.g. \cite{Friedman01,BuhYu02,YaoRosCap07}. In fact, Linearized Bregman iterations can be viewed as an extension of Landweber iteration (also called $L_2$-Boost in statistics),
\[  \beta_{k+1} = \beta_k  +  \frac{\alpha_k}{n} X^T ( y - X \beta_k),\]
which follows the primal path $\beta_t$ as a gradient descent method solving least square problem. To have solution sparsity, Linearized Bregman iterations \eqref{eq:lb} adds the dual path $\rho_t$ in favor of sparse solutions. For ISS, \cite{BGOX06} notices that early stopping regularization is needed as the Bregman distance between the signal $\beta^*$ and the path $\beta_t$ will first decrease and then increase after the prediction error $\|X\beta - y\|$ drops below the noise level. A further quantification of such early stopping regularization is given in \cite{BRH07} under a source condition.

Linearized Bregman iteration \eqref{eq:lb2} is shown in \cite{Yin2010} equivalent to the gradient ascent iteration applied to the Lagrange dual of the problem
\begin{equation}
\label{augl1}
\min_\beta \|\beta\|_1 + \frac{1}{2\kappa}\|\beta\|_2^2\quad\mathrm{subject~to}~X\beta = y.
\end{equation}
Such a combined $l_1$ and $l_2$ penalty is called \emph{Elastic Net} in statistics \cite{ZH05}. In particular, $\beta_k$ converges  to the unique solution of \eqref{augl1} at a linear rate (as long as $X\not=0$ and $X\beta = y$ has a solution); see \cite{Lai2013}. In addition, for sufficiently large $\kappa$, the solution to \eqref{augl1} is a solution to the basis pursuit model \cite{bpdn}, which is \eqref{augl1} without $\frac{1}{2\kappa}\|\beta\|_2^2$. In noisy settings, early stopping regularization is necessary for signal recovery. The introduction of Elastic Net in statistics is due to a limitation of LASSO that can select at most $s=n$ variables from $p\gg n$ candidates, where the additional $l_2$-penalty ($\|\beta\|_2^2$) enables one to select $s>n$ variables which might be highly correlated, at the cost of a biased estimator. This scenario is beyond the scope of this paper with the assumption $s\leq n< p$ and is left to be explored in the future. However we note that although the Inverse Scale Space \eqref{eq:lb-iss} can be equivalently viewed as differential inclusions (with a discretization \eqref{eq:lb2}) associated with the Elastic Net penalty, its dynamics does not follow the regularization paths of Elastic Net. The results in this paper basically say that under nearly the same condition as LASSO, Bregman ISS \eqref{eq:Bregman ISS} with early stopping regularization may recover the signal without bias, while the bias in \eqref{eq:lb-iss} and \eqref{eq:lb2} can be controlled to be arbitrarily small by increasing $\kappa$ with the same sign-consistency. Finally, we note that such iterative algorithms can be easily extended to general settings with differentiable convex loss and non-differentiable convex penalty, e.g. Linearized Bregman iteration in matrix completion \cite{CaiCanShe10}.

One should not confuse Linearized Bregman iteration \eqref{eq:lb2} with iterative soft-thresholding algorithm (ISTA), which has appeared under different names in the literature (for example, see \cite{DonJoh95,Donoho98,DeVore98,DemDef02,DauDefDem04,hale2008fixed}),
\[
\beta_{k+1} =\mathrm{shrink}(\beta_k + \frac{\alpha_k}{n} X^T(y-X\beta_k),\lambda_k).
\]
Both have an iterative thresholded dynamics with similar computational costs. However by moving the shrinkage operator to a different place in \eqref{eq:lb2}, Linearized Bregman iteration generates a sparse solution path, while ISTA treats $\lambda_k$ as the regularization parameter and its iterates converge to a LASSO solution with a regularization parameter $\lambda = \lim_{k\to\infty} \lambda_k$. Most ISTA-based LASSO solvers simply use a fixed $\lambda_k$ through out the iteration. Although the others update $\lambda_k$ over the iterations, they do so not aiming to provide a full regularization path but to accelerate convergence; this technique is known as ``continuation'' or homotopy method \cite{hale2008fixed}.

\subsection{Parallel and distributed computing}
It is very easy to implement iteration \eqref{eq:lb2} in parallel and distributed manners and apply it to very large-scale datasets.  Suppose $$  X=[X_1,~X_2,~\ldots,~X_L]\in\R^{n\times p},$$ where $X_\ell$'s are submatrices stored in a distributed manner (on a set of networked workstations). The sizes of $X_\ell$'s are flexible and can be chosen for good load balancing. Let each workstation $\ell$ hold data $y$ and $X_{\ell}$, and variables $z_{k,\ell}$ and $w_{k,\ell}:=X_\ell\beta_{k,\ell}$, which are parts of $z_k$ and summands of $w_k:=X\beta_k$, respectively. The iteration \eqref{eq:lb2} is carried out as
\begin{align*}
\mbox{for}~\ell=1,\ldots,L~\mbox{in parallel:}~&\begin{cases}
z_{k+1,\ell}= z_{k,\ell} +\frac{\alpha_k}{n} X_\ell^T(y-w_k), \\
w_{k+1,\ell}= \frac{1}{\kappa} X_\ell\mathrm{shrink}(z_{k+1,\ell},1),\label{lb2b}
\end{cases}\\
\mbox{all-reduce summation:}~& w_{k+1} = \sum_{\ell=1}^L w_{k+1,\ell},
\end{align*}
where the all-reduce step collects inputs from and then returns the sum to all the $L$\ workstations. It is the sum of $L$ $n$-dimensional vectors, so no matter how the all-reduce step is implemented, the communication cost is independent of $p$. It is important to note that the algorithm is not changed at all. In particular, distributing the data into more computing units, i.e., increasing $L$, does \emph{not} increase the number of iterations. Therefore, the parallel implementation is nearly embarrassingly parallel and truly scalable. In addition, it is also possible to develop implementations for data divided into blocks of rows of $X$ or even smaller subblocks that split both rows and columns. Recently, \eqref{eq:lb2} has also been extended in \cite{yuanlinearized} to a \emph{decentralized} setting where not only data and computation are distributed but communication is restricted to computing units with \emph{direct} communication links so there is no data fusion center or long distance communication. The scheme fits sensor network or multi-party regression over the internet, where long-distance communication incurs long delays and high costs.



\section{Experiments} \label{sec:experiment}


In this section we provide some experimental results to illustrate the relations among LASSO, Bregman ISS (ISS) and Linearized Bregman iteration (LB). The LASSO paths in comparison are computed by R-package `{\tt lars}', while LB paths are computed with our R-package `{\tt libra}'.

In this experiment we choose $n=80$, $p=100$ and only the first $s=30$ elements of $\beta$ are nonzero ($\beta_j = r_j + \sign(r_j)$, where $r_j\sim \NN(0,1)$, $j=1,\dots,30$). Each sample $x_i$ is drawn from the distribution $\NN(0,\Sigma_p)$. We choose $\Sigma_p=(\sigma_{ij})$, where $\sigma_{ij}=1$ if $i=j$, and $\sigma_{ij} = 1/(3p)$ otherwise. In such a setting, the Irrepresentable (Incoherence) Condition holds with high probability, since $\Sigma_p$ is nearly identity matrix. We choose noise level $\sigma=1$ here,  considering the choice that the magnitude of $\beta_i$ is $O(1)$.

\begin{figure}[!t]
\centering
    \includegraphics[width=0.95\textwidth,height=0.75\textheight]{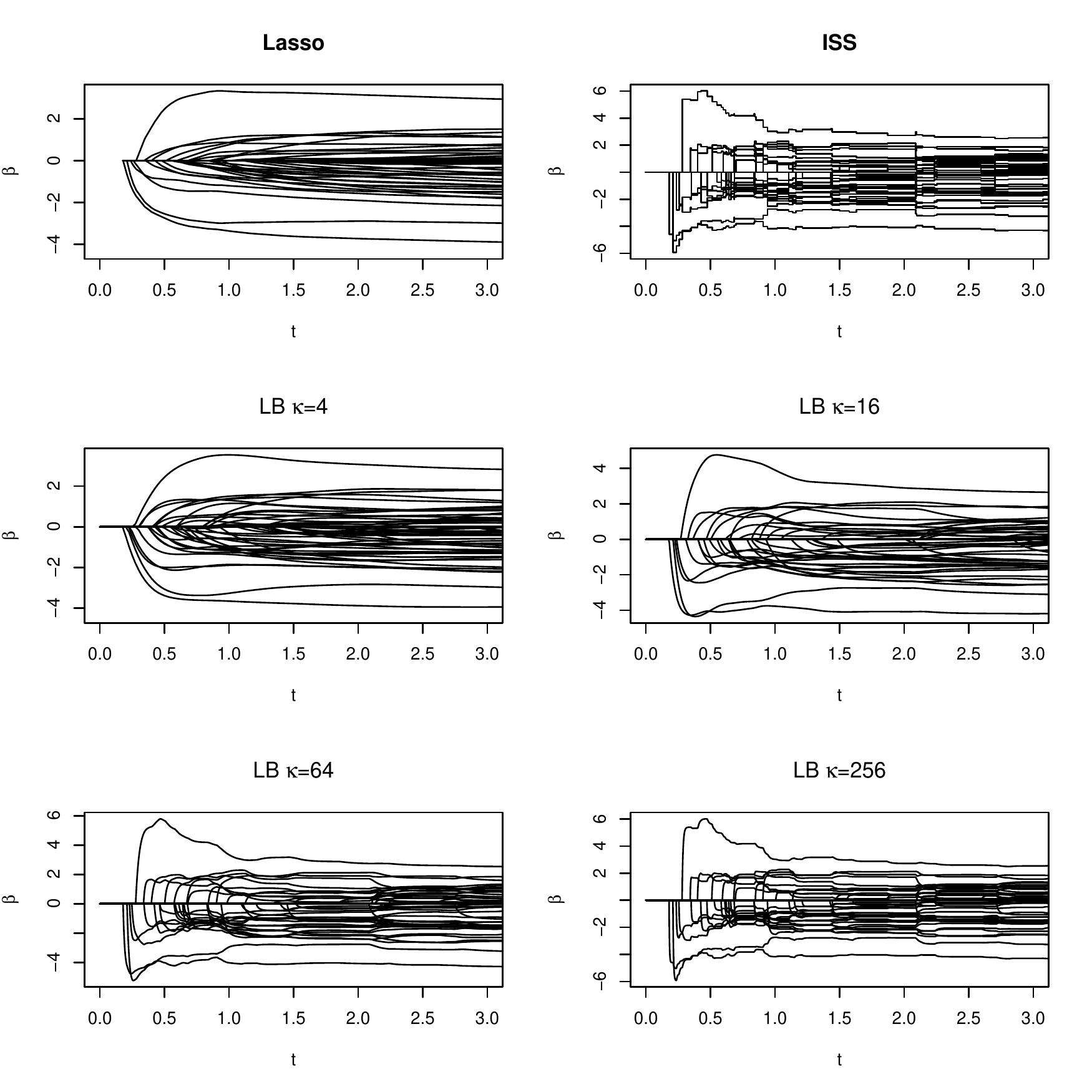}
    \caption{Regularization path of LASSO, Bregman ISS, and Linearized Bregman Iterations with different choices of $\kappa$ ($\kappa\alpha=1/10$). As $\kappa$ grows, the paths of Linearized Bregman iterations approach that of Bregman ISS.}\label{fig:RegPath}
\end{figure}
Figure \ref{fig:RegPath} is an example of regularization path of three methods. 
As $\kappa$ goes bigger, the LB path becomes closer to that of ISS. For LB we choose $\kappa\alpha=1/10$ such that the step size of gradient decent is $1/10$, to satisfy the convergence condition. Note that if $\alpha$ is too big, the solution is oscillating.


To compare the performance of three methods quantitatively, we choose the AUC of ROC curve, to measure the goodness of three regularization paths. ROC (receiver-operating-characteristic) curve is plotted by thresholding the regularization parameter $\lambda$ in LASSO, $t$ in ISS, or $k$ in LB at different levels which create different true positive rates (TPR) and false positive rates (FPR):
\[TPR = \frac{\#\{Selected~True~Variables\}}{\#\{True~Variables\}}, FPR = \frac{\#\{Selected~False~Variables\}}{\#\{False~Variables\}}.\]

ROC is a curve from $(0,0)$ to $(1,1)$. AUC (Area Under the Curve) means the area under the ROC curve. Large AUC values indicate that the signals are picked out earlier than noise on regularization paths. Repeating the experiments for 100 times, in Table \ref{tab:AUC} we report the mean AUC with standard deviations for the three methods at different noise levels. It shows that all the three methods work reasonably well in this example, while Bregman ISS performs slightly better than LASSO. As $\kappa$ becomes bigger, the performance of LB gets closer to that of Bregman ISS. Notice that as noise level $\sigma$ gets larger, all the methods have their performance decay since signal and noise get confused.

\begin{table}[!h]
\begin{tabular}{||c||c|c|c|c|c||}
\hline\hline
$\sigma$ &LB($\kappa = 4$) & LB($\kappa = 64$) & LB($\kappa = 1024$) &ISS & LASSO\\
\hline
$1$ &0.8747(0.0386) & 0.916(0.0366) & 0.9197(0.0361) & 0.9213(0.0359) & 0.9134(0.0375) \\
\hline
$2$ &0.8604(0.0422) & 0.8931(0.0422) &0.8958(0.0421) & 0.8967(0.0421) & 0.8935(0.0438) \\
\hline
$3$ &0.8306(0.0432) & 0.8513(0.0455) &0.8524(0.0457) & 0.8521(0.0467) & 0.8529(0.0464) \\
\hline
\end{tabular}
\medskip
\caption{Mean AUC (standard deviation) for three methods at different noise levels ($\sigma$): ISS has a slightly better performance than LASSO in terms of AUC and as $\kappa$ increases, the performance of LB approaches that of ISS. As noise level $\sigma$ increases, the performance of all the methods drops. }\label{tab:AUC}
\end{table}

\section{Conclusion and Future Directions} \label{sec:conclusion}
In this paper, noisy sparse signal recovery is approached via dynamics, called Bregman ISS, which can be viewed as a dual gradient descent derived from LASSO KKT conditions. A damped version of this dynamics, Linearized Bregman ISS, can be viewed as a dual gradient descent associated with Elastic Nets. A discretization of Linearized Bregman ISS leads to the widely used Linearized Bregman Iteration algorithm. 
Equipped with an early stopping regularization, Bregman ISS can simultaneously achieve model selection consistency and unbiased estimation, under nearly the same conditions as LASSO whose estimators are biased though.
As a discretization of Linearized Bregman ISS paths, model selection consistency and minimax optimal $l_2$-error bounds for Linearized Bregman Iteration are also established. Some data-dependent stopping rules are given for Bregman ISS solution paths.


Future directions of our study include fully data-dependent stopping rules and generalization of our results in nonlinear settings.

\appendix

\section{Proofs}\label{app}

\subsection{Proof of Theorem \ref{thm:ISSunique} and \ref{thm:LBISSunique}}

\begin{proof}[Proof of Theorem \ref{thm:ISSunique}]
The existence part follows from \cite{burger2013adaptive}, noticing the nonnegative least squares always have solutions.

We show that the uniqueness part. Define $f(\beta) :=\frac{1}{2n}\|y-X\beta\|^2$. Then, the differential inclusion \eqref{eq:Bregman ISS} is equivalent to
\begin{subequations}\label{eq:gen-iss}
\begin{align}
 \dot\rho_t &= -\nabla f(\beta_t),\label{eq:gen-issa}\\
 \rho_t &\in \partial\|\beta_t\|_1, \label{eq:gen-issb}
\end{align}
\end{subequations}
Let $S^+_t:=\{i:(\rho_t)_i=1\}$, $S^-_t:=\{i:(\rho_t)_i=-1\}$, and $S_t=S_t^+\cup S_t^-$. By \eqref{eq:Bregman ISSb}, in the case of $S_t=\emptyset$, we have $\beta_t=0$, so $-\nabla f(\beta_t)=-\nabla f(0)$ is unique. In the case of $S_t\not=\emptyset$, we show below that $X\beta_t$ and $-\nabla f(\beta_t)$ are  both unique. The uniqueness of $\rho_t$ follows from these results and \eqref{eq:gen-issa}.

In fact,  \eqref{eq:Bregman ISSa} and \eqref{eq:Bregman ISSb} impose the following constraints on $\beta_t$:
\begin{equation}\label{eq:optcond1}  \begin{cases} (\beta_t)_i\ge 0~\mbox{and}~\left(\nabla f(\beta_t)\right)_i\ge 0 ,& （\forall i\in S_t^+,\\
(\beta_t)_i\le 0 ~\mbox{and}~\left(\nabla f(\beta_t)\right)_i\le 0,& （\forall i\in S_t^-,\\
(\beta_t)_i= 0 ,& （\forall i\not\in S_t. \end{cases}
\end{equation}
To see how $\nabla f(\beta_t)$ is involved, notice that $\left(\nabla f(\beta_t)\right)_i\ge 0$ must hold for $\forall i\in S_t^+$ since $(\rho_t)_i\in[-1,1]$ is already at its maximal value 1 and $\nabla f(\beta_t)<0$ is forbidden as it would further increase $(\rho_t)_i$ to an impossible value. The same argument holds for  $\left(\nabla f(\beta_t)\right)_i\le 0$ for $\forall i\in S_t^-$.

Furthermore, we will have $(\beta_t)_i\cdot \left(\nabla f(\beta_t)\right)_i=0$ for all $i$. To see this, assume $(\beta_t)_i\not=0$. Then by the right continuity assumption, there exists an interval $[t,t+\epsilon)$ in which $\beta_i$ remains nonzero with the same sign. By \eqref{eq:gen-issb}, $(\rho_t)_i$ will remain either $+1$ or $-1$ in the same interval, so $\left(\nabla f(\beta_t)\right)_i=0$. On the other hand, assume $\left(\nabla f(\beta_t)\right)_i\not=0$. Then by \eqref{eq:gen-issa}, $\rho_i$ will change and thus it cannot stay either $+1$ or $-1$. By the right continuity of $\beta$, it must hold that $(\beta_t)_i=0$. Therefore, we have the addition constraints
\begin{equation}\label{eq:optcond2} (\beta_t)_i\cdot \left(\nabla f(\beta_t)\right)_i=0. \end{equation}
Conditions \eqref{eq:optcond1} and \eqref{eq:optcond2}  are precisely the KKT optimality conditions for
\begin{align}
\min_\beta~ & f(\beta) \nonumber\\
\mathrm{subject~to}~ &\begin{cases} \beta_i\ge 0,&\forall i\in S_t^+,\\
\beta_i\le 0 ,& \forall i\in S_t^-,\\
\beta_i= 0 ,& \forall i\not\in S_t, \end{cases}\label{eq:optprob-iss}
\end{align}
which is identify to \eqref{iss-sub} except \eqref{iss-sub} specifies the time $t_{k+1}$.
Let $\beta_t$ be the solution to problem \eqref{eq:optprob-iss}.

In general, if $f$ is strictly convex, then the solution $\beta_t$ is unique. In our case, $f$ is not necessarily strictly convex, but  $f = g(X\beta)$ for a strictly convex function $g$. Therefore, $X\beta_t$ is unique, and thus so is $\nabla f(\beta_t) = X^T\nabla g(X\beta_t)$. Lastly, $\beta_t$ is unique if the columns of $X$ corresponding to nonzero entries of $\beta_t$ are linearly independent since $X\beta_t$ is unique.
\end{proof}

\begin{proof}[Proof of Theorem \ref{thm:LBISSunique}]
Let $z_t = f(\rho_t,\beta_t) = \rho_t + \frac{1}{\kappa}\beta_t$, then $f$ is an injective function from the admissible set $(\rho,\beta)$ to $C^1$ in variable $t$ and $\beta_t = \kappa\mathrm{shrink}(z_t,1)$. 
Now differential inclusion \eqref{eq:lb-iss} becomes the ODE
\[\dot{z}_t = \frac{1}{n} X^T(y-\kappa X \cdot \mathrm{shrink}(z_t,1) ) =: g(z_t)\]
Obviously, $g(x)$ is Lipschitz continuous. Therefore, the Picard-Lindel\"{o}f Theorem implies that there exists a unique solution to this ODE,  which leads to the solution of \eqref{eq:lb-iss}.
\end{proof}

We note that the solution of \eqref{eq:lb-iss}, though not piece-wise linear or constant, can still be computed in a piece-wise closed form where on each piece, the signs of $\beta_t$ remain unchanged. This is left to the reader.

\subsection{Proof of Consistency of LBISS}

\begin{lem} \label{eq:sign}
\begin{itemize} Assume that $X_S$ has full column rank.
\item[(A)] For all $t\leq \tau$, solution of (\ref{eq:lb-iss}) $\beta(t)$ contains no false positive if
\[ \| X^\ast_{T} X_S^\dagger  (\rho_S + \beta_S/\kappa)  + t X^\ast_T P_{T} \epsilon \|_\infty < 1, \ \ \ \forall t \leq \tau, \]
where $P_T = I - X_S^\dagger X_S^\ast$ is the projection operator onto the column space of $X_T$.
\item[(B)] Mean path $\bar{\beta}(\tau)$ is sign-consistent if
\[ \sign(\bar{\beta}_S(\tau) )= \sign (\beta^*_S + \Phi_S^{-1} X_S^\ast \epsilon - \frac{1}{\tau} \Phi_S^{-1} (\rho_S (\tau) + \frac{1}{\kappa} \beta_S(\tau)) = \sign(\beta^\ast_S)  \]
where $\Phi_S = X^\ast_S X_S = \frac{1}{n}X^T_S X_S$.
\end{itemize}
\end{lem}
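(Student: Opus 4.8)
The plan is to mirror the block-splitting argument used in the proof of Theorem \ref{thm:nfp}, now carrying the extra damping term $\tfrac1\kappa\dot\beta$ through the computation. First I would write $y = X\beta^*+\epsilon$ so that $X^\ast(y-X\beta) = X^\ast X(\beta^*-\beta)+X^\ast\epsilon$, and split the LBISS relation \eqref{eq:lb-issa} into its $S$- and $T$-blocks. Working under the no-false-positive hypothesis $\beta_T\equiv 0$ on $[0,\tau]$ (to be justified), we have $X(\beta^*-\beta)=X_S(\beta^*_S-\beta_S)$, so the blocks collapse to
\begin{align*}
\dot\rho_S + \tfrac1\kappa\dot\beta_S &= \Phi_S(\beta^*_S-\beta_S) + X^\ast_S\epsilon,\\
\dot\rho_T &= X^\ast_T X_S(\beta^*_S-\beta_S) + X^\ast_T\epsilon,
\end{align*}
with $\Phi_S=X^\ast_S X_S$ and $\dot\beta_T=0$.

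For part (A), I would solve the first block for $\beta^*_S-\beta_S = \Phi_S^{-1}(\dot\rho_S+\tfrac1\kappa\dot\beta_S - X^\ast_S\epsilon)$ and substitute into the second. Using $X^\ast_T X_S\Phi_S^{-1}=X^\ast_T X_S^\dagger$ together with the identity $X_S^\dagger X_S^\ast = P_S$, so that $X^\ast_T\epsilon - X^\ast_T X_S^\dagger X^\ast_S\epsilon = X^\ast_T P_T\epsilon$, gives $\dot\rho_T = X^\ast_T X_S^\dagger(\dot\rho_S+\tfrac1\kappa\dot\beta_S)+X^\ast_T P_T\epsilon$. Integrating from $0$ with $\rho_0=\beta_0=0$ yields the closed form
\[ \rho_T(t) = X^\ast_T X_S^\dagger\bigl(\rho_S(t)+\tfrac1\kappa\beta_S(t)\bigr)+t\,X^\ast_T P_T\epsilon. \]
The hypothesis is exactly $\|\rho_T(t)\|_\infty<1$; since $\rho_T(t)\in\partial\|\beta_T(t)\|_1$ componentwise, any nonzero $\beta_{T,i}(t)$ would force $|\rho_{T,i}(t)|=1$, which the strict bound rules out, so $\beta_T(t)=0$.

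For part (B), I would integrate the first block equation to get $\beta_S(s) = \beta^*_S + \Phi_S^{-1}X^\ast_S\epsilon - \Phi_S^{-1}(\dot\rho_S(s)+\tfrac1\kappa\dot\beta_S(s))$, then average over $[0,\tau]$. Using $\int_0^\tau\dot\rho_S = \rho_S(\tau)$ and $\int_0^\tau\dot\beta_S = \beta_S(\tau)$ (from the zero initial data) produces exactly the displayed mean-path formula for $\bar\beta_S(\tau)$; sign-consistency on $S$ is then the stated condition, while $\bar\beta_T(\tau)=0$ is inherited from part (A), giving $\sign(\bar\beta(\tau))=\sign(\beta^*)$.

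The hard part is that the derivation of $\rho_T(t)$ is self-referential: the formula presupposes $\beta_T\equiv 0$, which is precisely the conclusion of (A). A clean justification requires a bootstrapping argument: set $\tau^*=\sup\{t:\beta_T(s)=0,\ \forall s\le t\}$; on $[0,\tau^*)$ the formula holds and the strict inequality keeps $\|\rho_T\|_\infty$ bounded away from $1$, so by the continuity of the LBISS path (Theorem \ref{thm:LBISSunique}) no $T$-component can activate at $\tau^*$, whence $\tau^*\ge\tau$. Here the continuity of LBISS, as opposed to the merely piecewise-constant Bregman ISS path, makes this step cleaner than in Theorem \ref{thm:nfp}. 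A secondary technical point is that the identity $\int_0^t\tfrac1\kappa\dot\beta_S = \tfrac1\kappa\beta_S(t)$ relies on absolute continuity of $\beta_S$, which again follows from the $C^1$ regularity established through the $z_t$-ODE in the proof of Theorem \ref{thm:LBISSunique}.
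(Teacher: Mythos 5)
Your proposal is correct and follows essentially the same route as the paper's proof: split \eqref{eq:lb-issa} into $S$- and $T$-blocks under the no-false-positive hypothesis, eliminate $\beta_S-\beta^*_S$ via $\Phi_S^{-1}$, integrate to get the closed form for $\rho_T(t)+\beta_T(t)/\kappa$, and conclude $\beta_T=0$ from the strict bound (the paper via the shrink identity, you via the subgradient characterization, which are equivalent); part (B) is the same integration of the $S$-block. Your explicit bootstrapping argument for the apparent circularity is a point the paper leaves implicit, but it does not change the substance of the argument.
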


No-false-positivity and the sign-consistency for mean path in Theorem \ref{thm:LB-iss}, directly follow this lemma.

\begin{proof}[Proof of Lemma \ref{eq:sign}]
Consider the differential inclusion (\ref{eq:lb-iss})
\[ \dot \rho + \frac{1}{\kappa}  \dot\beta = -\frac{1}{n} X^T (X \beta - y) = - X^\ast X(\beta - \beta^*) + X^\ast \epsilon.\]
Assume there exists a $\tau\geq 0$, such that for all $t\leq\tau$, solution path $\beta(t)$ contains no false-positive, i.e. $\supp(\beta(t))\subseteq S$. Then for all $t\le \tau$,
\begin{equation} \label{eq:ps}
\dot \rho_S + \dot \beta_S/\kappa= - X^\ast_S X_S (\beta_S- \beta^*_S) + X_S^\ast \epsilon,
\end{equation}
and
\begin{equation} \label{eq:pt}
\dot \rho_T +\dot \beta_T/\kappa = - X^\ast_T X_S (\beta_S- \beta^*_S) + X_T^\ast \epsilon.
\end{equation}
From (\ref{eq:ps}) one gets $-(\beta_S - \beta^*_S) = ( X^*_S X_S)^{-1}( \dot \rho_S +  \dot \beta_S/\kappa) - (X^*_S X_S)^{-1} X_S^\ast \epsilon$, which leads to the following equation by plugging into (\ref{eq:pt})
\[ \dot \rho_T + \dot \beta_T/\kappa= X^\ast_T X_S^\dagger  (\dot \rho_S + \dot \beta_S/\kappa) + X^\ast_T P_T \epsilon, \]
where $P_T = I-P_S = I - X_S^\dagger X_S^*$ is the projection matrix onto $\im(X_T)$.

Integration on both sides and setting
\[ \|\rho_T(t) + \beta_T (t)/\kappa\|_\infty=\| X^\ast_{T} X_S^\dagger  (\rho_S(t) + \beta_S(t)/\kappa) + t X^\ast_T P_{T} \epsilon \|_\infty < 1, \]
the first part follows from $\beta_T(t)=\kappa \cdot \mathrm{shrink}(\rho_T(t) + \beta_T (t)/\kappa,1)$.

The second part is obtained by integration on (\ref{eq:ps})
\[ \bar{\beta}_S(\tau) = \frac{1}{\tau} \int_0^\tau \beta_S(t) d t = \beta^*_S - \frac{1}{\tau} \Phi_S^{-1} (\rho_S(\tau)  + \frac{1}{\kappa} \beta_S(\tau)) +  \Phi_S^{-1} X_S^\ast \epsilon, \]
followed by taking $\sign(\bar{\beta}_S(\tau)) = \sign(\beta^\ast_S)$.
\end{proof}

\begin{lem} \label{prob-bound}
Suppose $\epsilon\sim \NN(0,\sigma^2 I_n)$, and $X\in R^{n\times p}$
\begin{eqnarray} \label{eq:prob-bound}
\Pr(\|X^T\epsilon\|_{\infty} > \sigma\sqrt{2(1+\mu)\log p}\max_{j}\|X_j\|) & \le& \frac{1}{p^\mu\sqrt{\pi\log p}} ; \\
\Pr(\|X^T\epsilon\|_{2} > \sigma\sqrt{2(1+\mu)\tr(X^TX)\log p}) &\le& \frac{1}{p^\mu\sqrt{\pi\log p}}.
\end{eqnarray}
\end{lem}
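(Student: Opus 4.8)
The plan is to treat the two bounds separately, deriving both from the scalar Gaussian tail estimate $\Pr(Z > t) \le \frac{1}{t\sqrt{2\pi}} e^{-t^2/2}$ for $Z \sim \NN(0,1)$ and $t>0$, whose $\frac{1}{t\sqrt{2\pi}}$ prefactor is exactly what produces the $\frac{1}{\sqrt{\pi\log p}}$ factor in the claimed probabilities.

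For the $\ell_\infty$ bound, first I would note that each coordinate is a scalar Gaussian, $X_j^T\epsilon \sim \NN(0,\sigma^2\|X_j\|^2)$, so that $X_j^T\epsilon/(\sigma\|X_j\|)$ is standard normal. Writing $a = \sigma\sqrt{2(1+\mu)\log p}\,\max_j\|X_j\|$ for the threshold and applying a union bound gives $\Pr(\|X^T\epsilon\|_\infty > a) \le \sum_{j=1}^p \Pr(|X_j^T\epsilon| > a)$. For each $j$ the relevant standardized threshold is $a/(\sigma\|X_j\|) = \sqrt{2(1+\mu)\log p}\,(\max_k\|X_k\|/\|X_j\|) \ge \sqrt{2(1+\mu)\log p}$. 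Since $t \mapsto e^{-t^2/2}/t$ is decreasing on $(0,\infty)$, the worst (largest) per-coordinate probability occurs at the minimal threshold $t_0 = \sqrt{2(1+\mu)\log p}$, giving the two-sided bound $\Pr(|X_j^T\epsilon|>a) \le \frac{2}{t_0\sqrt{2\pi}}e^{-t_0^2/2} = \frac{p^{-(1+\mu)}}{\sqrt{\pi(1+\mu)\log p}}$. Summing over the $p$ coordinates and using $1+\mu \ge 1$ to replace $\sqrt{1+\mu}$ by $1$ yields $\frac{p^{-\mu}}{\sqrt{\pi\log p}}$, the asserted bound.

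For the $\ell_2$ bound, the natural route is to recognize $\|X^T\epsilon\|_2^2 = \sigma^2 g^T X X^T g$ with $g \sim \NN(0,I_n)$, a Gaussian quadratic form whose expectation is $\sigma^2\tr(XX^T) = \sigma^2\tr(X^TX)$ --- precisely the scale appearing inside the claimed threshold. Diagonalizing $M = XX^T = \sum_i\lambda_i u_iu_i^T$ rewrites this as a weighted sum $\sigma^2\sum_i\lambda_i Z_i^2$ of independent chi-squares, and I would control its upper tail by the Chernoff/MGF method, $\Pr(g^TMg > x) \le \inf_{0\le s < 1/(2\lambda_{\max})} e^{-sx}\prod_i(1-2s\lambda_i)^{-1/2}$, using $\tr(M)$ as the dominant scale (with $\|M\|\le\tr(M)$ and $\tr(M^2)\le\tr(M)^2$ to absorb the lower-order terms). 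Evaluating at $x = 2(1+\mu)\tr(M)\log p$ produces a tail of order $p^{-(1+\mu)}$, comfortably inside $\frac{1}{p^\mu\sqrt{\pi\log p}}$.

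I expect the $\ell_2$ bound to be the main obstacle, specifically matching the exact prefactor $\frac{1}{\sqrt{\pi\log p}}$: the Chernoff estimate yields a clean exponential $p^{-(1+\mu)}$ rather than the Gaussian-tail form with the $1/(t\sqrt{2\pi})$ prefactor seen in the $\ell_\infty$ case, so some care (or a slightly different reduction, e.g.\ a Borell--TIS concentration of $\|X^T\epsilon\|_2$ around its mean, which is at most $\sigma\sqrt{\tr(X^TX)}$, with sub-Gaussian parameter $\sigma\|X\|$) is needed to land on exactly the stated probability. The $\ell_\infty$ part is entirely routine; the only mild subtlety there is the monotonicity argument that lets $\max_j\|X_j\|$ replace the individual column norms uniformly.
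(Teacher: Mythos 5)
Your treatment of the $\ell_\infty$ bound is correct and coincides with the paper's: standardize each coordinate, apply the two-sided Gaussian tail $\Pr(|Z|>t)\le \frac{2}{t\sqrt{2\pi}}e^{-t^2/2}$ at $t=\sqrt{2(1+\mu)\log p}$ to get a per-coordinate probability of at most $\frac{1}{p^{1+\mu}\sqrt{\pi\log p}}$, and take a union bound over the $p$ columns. The monotonicity remark handling $\max_j\|X_j\|$ versus the individual $\|X_j\|$ is fine (the paper sidesteps it by stating the per-coordinate bound with $\|X_j\|$ itself).

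The $\ell_2$ bound is where your proposal has a genuine gap, and you have correctly sensed it yourself. The chi-square Chernoff/MGF route and the Borell--TIS route both fail to reproduce the stated constant: in the worst case (e.g.\ $X$ of rank one, so that $\|XX^T\|=\tr(X^TX)$ and $\tr((XX^T)^2)=\tr(X^TX)^2$), the generic quadratic-form tail at level $2(1+\mu)\tr(X^TX)\log p$ only gives something of order $p^{-(1+\mu)}e^{c\sqrt{\log p}}$ rather than $\frac{1}{p^{1+\mu}\sqrt{\pi\log p}}$, so the claimed inequality is not recovered for all $p$ and $\mu$. The missing idea is that no new probabilistic estimate is needed at all: since $\|X^T\epsilon\|_2^2=\sum_j|X_j^T\epsilon|^2$ and $\tr(X^TX)=\sum_j\|X_j\|^2$, a pigeonhole argument gives the deterministic set inclusion
\begin{equation*}
\left\{\epsilon:\|X^T\epsilon\|_{2} > \sigma\sqrt{2(1+\mu)\tr(X^TX)\log p}\right\} \subseteq \bigcup_j \left\{\epsilon:|X_j^T\epsilon| > \sigma\sqrt{2(1+\mu)\log p}\,\|X_j\|\right\},
\end{equation*}
because if every coordinate event fails then summing the coordinate inequalities bounds $\|X^T\epsilon\|_2^2$ by $2(1+\mu)\sigma^2\log p\cdot\tr(X^TX)$. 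The union bound over the same $p$ coordinate events, with the same per-coordinate tail already computed for the $\ell_\infty$ part, then yields exactly $\frac{1}{p^\mu\sqrt{\pi\log p}}$. You should replace the quadratic-form machinery by this one-line reduction.
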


\begin{proof}[Proof of Lemma \ref{prob-bound}]
From the Gaussian tail probability bound,
\begin{eqnarray*}
\Pr(|X_j^T\epsilon| > \sigma\sqrt{2(1+\mu)\log p}\|X_j\|) & \le & 2 \frac{1}{\sqrt{2(1+\mu)\log p}\sqrt{2\pi}}e^{-\frac{2(1+\mu)\log p}{2}} \\
&\leq& \frac{1}{p^{1+\mu}\sqrt{\pi\log p}}.
\end{eqnarray*}
The first inequality is directly the union bound of index $j$. The second inequality is obtained by the fact
\[\{\epsilon:\|X^T\epsilon\|_{2} > \sigma\sqrt{2(1+\mu)\tr(X^TX)\log p}\} \in \bigcup_j \{\epsilon:|X_j^T\epsilon| > \sigma\sqrt{2(1+\mu)\log p}\|X_j\|\},\]
which ends the proof.
\end{proof}

\begin{proof}[Proof of Lemma \ref{lem:bihari}]
Denote
\[A_{t}=\{i \in S |\sign(\tilde{\beta}_{i}^*) \neq \sign(\beta_{i}^\prime)\} \subseteq S.\]
Noticed that
\begin{eqnarray*}
 \|\tilde{\beta}^*_S-\beta_S^\prime\|_{2}^{2} &\geq& \sum_{i \in A_{t}} \tilde{\beta}_{i}^{*2}\\ \nonumber
 &\geq& \max\{\tilde{\beta}_{\min}\sum_{i \in A_{t}} |\tilde{\beta}^*_{i}|, (\sum_{i \in A_{t}} |\tilde{\beta}^*_{i}|)^2/s\} \\ \nonumber
  &\geq& \max\{\tilde{\beta}_{\min}D(\tilde{\beta}^*_S,\beta^\prime_S)/2, D(\tilde{\beta}^*_S,\beta^\prime_S)^2/4s\} ,\nonumber
 \end{eqnarray*}
and
$$\|\tilde{\beta}^*_S-\beta_S^\prime\|_2< \tilde{\beta}_{\min} \Rightarrow A_t=\emptyset\Rightarrow \tilde{\beta}^*_S=\beta_S^\prime \Rightarrow D(\tilde{\beta}^*_S,\beta^\prime_S)=0, $$
then according to the definition of $\Psi$ and $F$, we have
$$\Psi(\beta^\prime_S) = \frac{\|\tilde{\beta}^*_S-\beta_S^\prime\|_2^2}{2\kappa}+D(\tilde{\beta}^*_S,\beta^\prime_S)\leq F(\|\tilde{\beta}^*_S-\beta_S^\prime\|_{2}^{2}) $$ which implies
$$ F^{-1}(\Psi(\beta^\prime_S)) \leq \|\tilde{\beta}^*_S-\beta_S^\prime\|_{2}^{2}. $$
Combining the following result from right continuous differentiability
$$ \left<\frac{d\rho^\prime_S}{dt}, \beta^\prime_S\right> =0, $$
and the strong convexity conditions of $X^*_{s}X_s$, we have
$$
\frac{d}{dt}( \Psi(\beta^\prime_S) ) = -\left<\beta^\prime_S-\tilde{\beta}^*_S,X^*_{s}X_s(\beta^\prime_S-\tilde{\beta}^*_S)\right>  \leq -\gamma \|\tilde{\beta}^*_S-\beta_S^\prime\|_{2}^{2} \leq -\gamma F^{-1}(\Psi(\beta^\prime_S)),
$$
as desired.
\end{proof}

\begin{proof}[Proof of Lemma \ref{lem:stopping}]
From the generalized Bihari's inequality
\[\tilde{\tau}_1\le - \int_0^{\tilde{\tau}_1} \frac{\frac{d}{dt}( \Psi(\beta^\prime_S) )} {\gamma F^{-1}(\Psi(\beta^\prime_S))} dt
 = \frac{1}{\gamma} \int_{\Psi(\tilde{t}_\infty)}^{\Psi(0)} \frac{dx}{F^{-1}(x)}. \]

Note that $\Psi(0) = \|\tilde{\beta}^*_S\|_1 + \frac{\|\tilde{\beta}_S^*\|^2}{2\kappa}$.~so $F^{-1}(\Psi(0))\le \|\tilde{\beta}^*_S\|_2^2$.
By continuity and monotonicity of $F(x)$ on $(\tilde{\beta}_{\min}^2,+\infty)$ and $\Psi(\tilde{t}_1) \ge \frac{\tilde{\beta}^2_{\min}}{2\kappa}$, we have

\begin{eqnarray*}
\gamma\tilde{\tau}_1 &\le& \int_{\frac{\tilde{\beta}^2_{\min}}{2\kappa}}^{\frac{\tilde{\beta}_{\min}^2}{2\kappa} + 2\tilde{\beta}_{\min}} \frac{dx}{F^{-1}(x)} + \int_{\tilde{\beta}_{\min}^2}^{s\tilde{\beta}_{\min}^2} \frac{dF}{x} + \int_{s\tilde{\beta}_{\min}^2}^{\|\tilde{\beta}^*\|_2^2} \frac{dF}{x} \\ \nonumber
&\le&  \int_{\frac{\tilde{\beta}^2_{\min}}{2\kappa}}^{\frac{\tilde{\beta}_{\min}^2}{2\kappa} + 2\tilde{\beta}_{\min}} \frac{dx}{\tilde{\beta}_{\min}^2} + \int_{\tilde{\beta}_{\min}^2}^{s\tilde{\beta}_{\min}^2} (\frac{1}{2\kappa x}+\frac{2}{\tilde{\beta}_{\min}x})dx + \int_{s\tilde{\beta}_{\min}^2}^{\|\tilde{\beta}^*\|_2^2} (\frac{1}{2\kappa x}+\frac{\sqrt{s}}{x^{\frac{3}{2}}}) dx \\ \nonumber
&\leq&\frac{4+2\log{s}}{\tilde{\beta}_{\min}}+\frac{1}{\kappa}\log(\frac{\|\tilde{\beta}^*\|_{2}}{\tilde{\beta}_{\min}}).
\end{eqnarray*}

Proof of $\tilde{\tau}_2$ is straightforward now. For $t<\tilde{\tau}_2$,\\
\[\frac{d \Psi}{dt} \le - \gamma\|\tilde{\beta}^*- \beta \|_{2}^{2} \le - \gamma \frac{C^{2}s\log{d}}{n}.\]
Let
\[\tilde{F}(x) = \frac{x}{2\kappa}+2\sqrt{xs} \geq F(x), \quad  \forall x>0.\]
Let $\tilde{F}^{-1}$ be the right-continuous inverse. Then $\|\beta(t)-\tilde{\beta}^*\|_{2}^{2} \ge F^{-1}(\Psi(\beta)) \ge \tilde{F}^{-1}(\Psi(\beta))$.
By generalized Bihari's inequality
\[- \frac{1}{\gamma}\frac{d \Psi}{dt} \geq \|\tilde{\beta}^*- \beta \|_{2}^{2} \geq \tilde{F}^{-1}(\Psi). \]
Therefore
\[- \frac{1}{\gamma}\frac{d \Psi}{dt} \geq \max\left\{\tilde{F}^{-1}(\Psi), \frac{C^{2}s\log{d}}{n}\right\}. \]
Again, we have
\[\tilde{\tau}_{2}\leq \frac{1}{\gamma}\int_{\Psi(\tilde{t}_{2})}^{\Psi(0)} \frac{dx}{\max\{\tilde{F}^{-1}(x), \frac{C^{2}s\log{d}}{n}\}}.\]
Noticed that
\[ \tilde{F}^{-1}(\Psi(0)) \leq F^{-1}(\Psi(0)) \leq \|\tilde{\beta}^*\|_2^{2}.\]
Therefore,
\begin{eqnarray*}
\gamma \tilde{\tau}_{2}&\leq& \int_{0}^{\tilde{F}(C^{2}s\log{d}/n)} \frac{dx}{  \frac{C^{2}s\log{d}}{n}}+\int_{\tilde{F}(C^{2}s\log{d}/n)}^{\Psi(0)}\frac{dx}{\tilde{F}^{-1}(x)}\\ \nonumber
&\leq& \int_{0}^{(C^{2}s\log{d}/n)/2\kappa + 2Cs\sqrt{\log{d}/n}} \frac{dx}{\frac{C^{2}s\log{d}}{n}}+\int_{C^{2}s\log{d}/n}^{\|\tilde{\beta}^*\|_{2}^{2}}\frac{d\tilde{F}}{x} \\ \nonumber
&\leq& \frac{1}{2\kappa} + \frac{2}{C}\sqrt{\frac{n}{\log{d}}} + \int_{C^{2}s\log{d}/n}^{\|\tilde{\beta}^*\|_{2}^{2}} (\frac{1}{2\kappa x} + \frac{\sqrt{s}}{x^{3/2}})dx\\ \nonumber
&\leq &\frac{4}{C}\sqrt{\frac{n}{\log d}}+\frac{1}{2\kappa}(1+\log{\frac{n\|\tilde{\beta}^*\|^{2}_{2}}{C^{2}s\log{d}}})
\end{eqnarray*}
which gives the bounds.
\end{proof}

\begin{proof}[Proof of Theorem \ref{thm:LB-iss}]
\begin{eqnarray*}
\mathcal{A}&=&\{\epsilon:\|X_S(X_S^\ast X_S)^{-1}X_S^\ast\epsilon\|_2 > 2\sigma\sqrt{s\log n}\}; \\
\mathcal{B}&=&\{\epsilon:\|(X_S^\ast X_S)^{-1}X_S^\ast\epsilon\|_{\infty} > 2\sigma\sqrt{\frac{\log p}{n\gamma}}\};\\
\mathcal{C}&=&\{\epsilon:\|X_T^\ast P_T\epsilon\|_{\infty} > 2\sigma\sqrt{\frac{\log p}{n}}\max_{j\in T}\|X_j\|_n\}.
\end{eqnarray*}
Note $tr(X_S(X_S^\ast X_S)^{-1}X_S^\ast) = s, (X_S^\ast X_S)^{-1}X_S^\ast\cdot X_S(X_S^\ast X_S)^{-1} = (X_S^\ast X_S)^{-1} \preceq 1/\gamma,$ and $X_T^\ast P_T \cdot P_T X_T \preceq X_T^\ast X_T$, using Lemma \ref{prob-bound}, we have
\[\Pr(A)\le \frac{1}{n\sqrt{\pi\log n}}, ~~\Pr(B)\le \frac{1}{p\sqrt{\pi\log p}}, ~~\Pr(C)\le \frac{1}{p\sqrt{\pi\log p}}.\]

\begin{itemize}
\item[(1)] (no-false-positivity for $\beta(t)$ up to $\tau$)
First consider the LB-ISS
\begin{equation}\label{eq:psub}
\frac{d \rho_S}{d t} + \frac{1}{\kappa}  \frac{d \beta_S}{d t} = - X_S^\ast X_S (\beta_S - \tilde{\beta}_S^*)
\end{equation}
where $\tilde{\beta}_S^* = \beta_S^\ast + (X_S^\ast X_S)^{-1}X_S^\ast\epsilon$. It is easy to conclude $\|X_S(\tilde{\beta}_S^*-\beta_S)\|_2$ is monotonically decreasing based on the following observation
$$ \frac{d}{dt}\frac{\|X_S(\tilde{\beta}_S^*-\beta_S)\|^2_2}{2n}=-\left<\frac{d\rho_S}{dt},\frac{d\beta_S}{dt}\right>-\frac{1}{\kappa}\left\|\frac{d\beta_S}{dt}\right\|_2^2=-\frac{1}{\kappa}\left\|\frac{d\beta_S}{dt}\right\|_2^2\leq 0$$
using $\left< d \rho_S(t)/d t, d \beta_S(t)/d t\right> =0$ from the assumption of Bregman ISS paths.
On the set $\mathcal{A}^c \bigcup \mathcal{B}^c$,
\begin{eqnarray*}
\|\beta_S\|_\infty &\le& \|\tilde{\beta}_S^*\|_\infty + \|\tilde{\beta}_S^* - \beta_S(t)\|_2\\ \nonumber
 &\le& \tilde{\beta}_{\max} + \frac{\|X_S(\tilde{\beta}_S^* - \beta_S(t))\|_2}{\sqrt{n\gamma}} \\
 &\le& \tilde{\beta}_{\max} + \frac{\|X_S\tilde{\beta}_S^*\|_2}{\sqrt{n \gamma}}\\
 &\le& \beta^*_{\max} + 2 \sigma \sqrt{\frac{\log p}{\gamma n}} + \frac{\|X_S\beta^*\|_2 + 2\sigma\sqrt{s\log{n}}}{\sqrt{n\gamma}}.
 \end{eqnarray*}

Denote this upper bound as $B$. Returning to the original problem, by Lemma \ref{eq:sign}, it suffices to have for all $t\leq \tau$,
$$1 >  \| X^\ast_{T} X_S^\dagger  (\rho_S + \beta_S/\kappa)  + t X^\ast_T P_{T} \epsilon \|_\infty .$$
The first part
$$\| X^\ast_{T} X_S^\dagger  (\rho_S+\beta_S/\kappa)\|_\infty \leq (1-\eta)(1+ \|\beta_S\|_\infty/\kappa) \leq 1 - (1 - B/\kappa\eta)\eta$$
and for the second part, since
\[ t \leq  \tau := \frac{1 - B/\kappa\eta}{2} \eta \sigma^{-1} \sqrt{n/\log p} \left (\max_{j\in T} \|A_j\|\right )^{-1} = O(\eta \sigma^{-1} \sqrt{n/\log p}), \]
which leads to that on the set $\mathcal{C}^c$, $t \| X^\ast_T P_T \epsilon\|_\infty < (1 - B/\kappa\eta)\eta$.

\item[(2)] (no-false-negativity for the mean path) it suffices to ensure
\[ \beta^*_{\min} > \| \Phi_S^{-1} X_S^\ast \epsilon \|_\infty + \| \frac{1}{\tau} \Phi_S^{-1} ( \rho_S + \beta_S/\kappa ) \|_\infty,\]
where $\Phi_S = X_S^\ast X_S$. The second part on the right hand side is $\|\frac{1}{\tau} \Phi_S^{-1} ( \rho_S + \beta_S/\kappa )\|_\infty \leq \frac{1}{\tau}  \| \Phi_S^{-1}\|_\infty(1+B/\kappa)$. The first part is bounded on the set $\mathcal{B}^c$.

\item[(3)]  ($l_{2}$-error bound) Lemma \ref{lem:stopping} implies if $C > \frac{8\sigma\left (\max_{j\in T} \|X_j\|_n\right )}{\eta\gamma}$, when $\kappa$ is big enough, we have
\begin{eqnarray*}
\tilde{t}_{2}(C) &\leq& \frac{4}{C\gamma}\sqrt{\frac{n}{\log p}}+\frac{1}{2\kappa\gamma}(1+\log{\frac{n\|\tilde{\beta}^*\|^{2}_{2}}{C^{2}s^{2}\log{p}}}) \\ \nonumber
&\leq& \frac{4}{C\gamma}\sqrt{\frac{n}{\log p}}+\frac{1}{2\kappa\gamma}(1+\log{\frac{n\|\beta^\ast\|^{2}_{2}+4\sigma^2s\log{p}/\gamma}{C^{2}s\log{p}}}) \\ \nonumber
 &\leq& \overline{\tau}.
 \end{eqnarray*}
Thus $\exists \tau\in [0,\overline{\tau}]$
\[ \|\beta_S(\tau)-\tilde{\beta}_{S}^*\|_{2} \leq C\sqrt{s\log(p)/n}. \]
Note that with high probability
\[ \|\beta^{*}_{S}-\tilde{\beta}_{S}^*\|_{2} \leq 2\sigma\sqrt{s\log(p)/n}\gamma^{-1/2}. \]

\item[(4)] (Sign Consistency for $\beta_t$) The condition
$$\beta^{*}_{min}\geq \frac{4 \sigma}{\gamma^{1/2}} \sqrt{\frac{\log p}{n}} $$
implies that  $\tilde{\beta}^*$ has the same sign as $\beta^{*}$ as well as $1/2|\beta^{*}_{i}| \leq |\tilde{\beta}^*_{i}| \leq 3/2|\beta^{*}_{i}|$ for each component $i$. Thus sign consistency is reached when $\tilde{t}_{\infty} \leq \overline{\tau}$, or
\begin{eqnarray*}
\frac{4+2\log{s}}{\gamma\tilde{\beta}_{\min}}+\frac{1}{\kappa\gamma}\log(\frac{\|\tilde{\beta}^*\|_{2}}{\tilde{\beta}_{\min}})
&\leq& \frac{8+4\log{s}}{\beta^*_{\min}\gamma}+\frac{1}{\kappa\gamma}\log(\frac{3\|\beta^\ast\|_{2}}{\beta^*_{\min}}) \\ \nonumber
&\leq& \overline{\tau} \nonumber,
 \end{eqnarray*}
which is ensured by $\kappa$ big enough and
\[ \beta^*_{\min} \geq 2 \tilde{\beta}_{\min} \ge \left(\frac{4 \sigma}{\gamma^{1/2}} \vee\frac{8\sigma(2+\log{s})\left (\max_{j\in T} \|X_j\|_n\right )}{\gamma\eta}\right) \sqrt{\frac{\log{p}}{n}}.\]
\end{itemize}
This completes the proof.
\end{proof}

\subsection{Proof of Consistency of Linearized Bregman Iterations}

First of all, we give a discrete version of generalized Bihari's inequality which is useful for Linearized Bregman iterations (\ref{eq:lb}).
\begin{lem}[Discrete Generalized Bihari's inequality] \label{lem:dbihari} Consider the LB
\[(\rho_{k+1} - \rho_k) + (\beta_{k+1} - \beta_k)/\kappa = - \alpha_k X^\ast_S X_S(\beta_k-\tilde{\beta}^*),\]
where $X^\ast_S X_S \geq \gamma I$.
Let the potential (or Lyapunov) function be
\[\Psi_k= D(\tilde{\beta},\beta_{k})+\frac{||\beta_k-\tilde{\beta}^*||^2}{2\kappa}.\]
Then the following difference inequality holds
\[ \Psi_{k+1} - \Psi_{k} \leq -\alpha_k\gamma (1-\kappa \alpha_k\|X_SX_S^\ast\|/2) F^{-1}(\Psi_k) ,\]
 where $F$ is defined by (\ref{eq:F}).
\end{lem}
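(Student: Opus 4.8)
The plan is to recognize the potential $\Psi_k$ as the Bregman distance of the elastic-net functional and then run the standard ``three-point'' Bregman computation, carefully retaining the extra term that the forward-Euler step produces. Concretely, set $J(\beta) := \|\beta\|_1 + \frac{1}{2\kappa}\|\beta\|_2^2$ and $p_k := \rho_k + \beta_k/\kappa$. Since $\rho_k \in \partial\|\beta_k\|_1$, we have $p_k \in \partial J(\beta_k)$, and a direct expansion (exactly as in the derivation of $\Psi$ in Section \ref{sec:analysis}) shows $\Psi_k = D_J(\tilde{\beta}^*,\beta_k)$, the $J$-induced Bregman distance. In this notation the iteration in the statement is precisely $p_k - p_{k+1} = \alpha_k X_S^\ast X_S(\beta_k - \tilde{\beta}^*)$.

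First I would establish the elementary Bregman identity
\[ D_J(u,\beta_{k+1}) - D_J(u,\beta_k) = -D_J(\beta_{k+1},\beta_k) + \langle p_k - p_{k+1},\, u - \beta_{k+1}\rangle, \]
valid for any $u$ and any admissible subgradients $p_k,p_{k+1}$. Taking $u=\tilde{\beta}^*$, substituting the iteration, and splitting $\tilde{\beta}^* - \beta_{k+1} = -(\beta_k - \tilde{\beta}^*) + (\beta_k - \beta_{k+1})$ gives
\[ \Psi_{k+1} - \Psi_k = -D_J(\beta_{k+1},\beta_k) - \alpha_k q_k + \alpha_k \langle X_S^\ast X_S(\beta_k - \tilde{\beta}^*),\, \beta_k - \beta_{k+1}\rangle, \]
where $q_k := \langle \beta_k - \tilde{\beta}^*,\, X_S^\ast X_S(\beta_k - \tilde{\beta}^*)\rangle$. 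The term $-\alpha_k q_k$ is the discrete analogue of the continuous dissipation in Lemma \ref{lem:bihari}; the other two terms are the discretization error to be absorbed.

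The core estimate is to show these error terms cost only the factor $1 - \kappa\alpha_k\|X_SX_S^\ast\|/2$. Decomposing $D_J = D_{\|\cdot\|_1} + \frac{1}{2\kappa}\|\cdot\|_2^2$ and using nonnegativity of the $\ell_1$-Bregman distance gives $-D_J(\beta_{k+1},\beta_k) \leq -\frac{1}{2\kappa}\|\beta_{k+1} - \beta_k\|_2^2$. Completing the square (Young's inequality),
\[ -\tfrac{1}{2\kappa}\|\beta_k - \beta_{k+1}\|_2^2 + \alpha_k\langle X_S^\ast X_S(\beta_k - \tilde{\beta}^*),\, \beta_k - \beta_{k+1}\rangle \leq \tfrac{\kappa\alpha_k^2}{2}\|X_S^\ast X_S(\beta_k - \tilde{\beta}^*)\|_2^2, \]
reduces everything to a single quadratic form. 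The comparison $\|X_S^\ast X_S v\|_2^2 = \langle v,(X_S^\ast X_S)^2 v\rangle \leq \|X_SX_S^\ast\|\,\langle v, X_S^\ast X_S v\rangle$ (using $M^2 \preceq \|M\|M$ together with $\|X_S^\ast X_S\| = \|X_SX_S^\ast\|$) then bounds the right-hand side by $\frac{\kappa\alpha_k^2\|X_SX_S^\ast\|}{2}q_k$. Combining the pieces,
\[ \Psi_{k+1} - \Psi_k \leq -\alpha_k\Big(1 - \tfrac{\kappa\alpha_k\|X_SX_S^\ast\|}{2}\Big) q_k, \]
and finishing with $q_k \geq \gamma\|\beta_k - \tilde{\beta}^*\|_2^2 \geq \gamma F^{-1}(\Psi_k)$ — the first inequality being (A1) and the second the pointwise bound $F^{-1}(\Psi(\beta)) \leq \|\tilde{\beta}^* - \beta\|_2^2$ already proved inside Lemma \ref{lem:bihari} — yields the claim.

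The main obstacle is the joint control of the two discretization terms. Unlike the continuous case, where the relation $\langle \dot\rho_S,\beta_S\rangle = 0$ makes the dissipation exact, the forward step introduces the positive quantity $D_J(\beta_{k+1},\beta_k)$ alongside a cross term of indefinite sign, and a crude bound on either alone would not produce the sharp prefactor. The key is that the quadratic part of $D_J$ supplies exactly the $-\frac{1}{2\kappa}\|\beta_{k+1}-\beta_k\|_2^2$ needed to dominate the cross term through Young's inequality, and that the operator-norm comparison converts $\|X_S^\ast X_S(\beta_k-\tilde{\beta}^*)\|_2^2$ back into $q_k$ with constant $\|X_SX_S^\ast\|$; this is where the step-size restriction $\kappa\alpha\|X_SX_S^\ast\| < 2$ enters to keep the prefactor positive. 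Everything else — the identity and the reduction to $F^{-1}(\Psi_k)$ — is inherited verbatim from the continuous analysis.
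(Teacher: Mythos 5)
Your proposal is correct and follows essentially the same route as the paper's proof: the three-point Bregman identity for the elastic-net functional $J$ is just a repackaging of the paper's direct pairing of the iteration with $\beta_k-\tilde{\beta}^*$, your use of nonnegativity of $D_{\|\cdot\|_1}(\beta_{k+1},\beta_k)$ is the same fact as the paper's observation that $(\rho_{k+1}-\rho_k)^T\beta_{k+1}\ge 0$, and the remaining steps (Young's inequality absorbing the cross term at cost $\tfrac{\kappa\alpha_k^2}{2}\|X_S^\ast X_S(\beta_k-\tilde{\beta}^*)\|^2$, the comparison $(X_S^\ast X_S)^2\preceq\|X_SX_S^\ast\|\,X_S^\ast X_S$, and the bound $F^{-1}(\Psi_k)\le\|\beta_k-\tilde{\beta}^*\|_2^2$) coincide with the paper's. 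If anything, your completion of the square via Young's inequality is marginally cleaner than the paper's, which adds $\|\rho_{k+1}-\rho_k\|^2$ before completing the square and thereby implicitly uses $\kappa\ge 1$.
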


\begin{proof}[Proof of Lemma \ref{lem:dbihari}]
Similar to continue case, we have
$$\|\beta_k-\tilde{\beta}^*\|_{2}^{2} \ge F^{-1}(\Psi_k). $$

Since $\ell_1$-norm is homogeneous of degree 1, its subgradient $\rho \in \partial \|\beta\|_1$ satisfies $\left<\rho, \beta\right>=\|\beta\|_1$. Multiplying $\beta_k-\tilde{\beta}^*$ on the both sides of iteration equation, it leads to
\[ \Psi_{k+1} - \Psi_k + (\rho_{k+1}-\rho_k)\beta_k -\|\beta_{k+1}-\beta_k\|^2/2\kappa = -\alpha_k\left<\beta_k-\tilde{\beta}^*,X_S^{*}X_S(\beta_k-\tilde{\beta}^*) \right> \]
Note that for $i \in S$, $(\rho^{(i)}_{k+1}-\rho^{(i)}_k)\beta^{(i)}_{k+1} = |\beta^{(i)}_{k+1}| - \rho^{(i)}_k\beta^{(i)}_{k+1}\ge 0$
\begin{eqnarray}\nonumber
&&\|\beta_{k+1}-\beta_k\|^2/\kappa - 2(\rho_{k+1}-\rho_k)\beta_k \\\nonumber
&\le& \|\beta_{k+1}-\beta_k\|^2/\kappa + 2(\rho_{k+1}-\rho_k)(\beta_{k+1}-\beta_k)\\\nonumber
&\le& \|\beta_{k+1}-\beta_k\|^2/\kappa + 2(\rho_{k+1}-\rho_k)(\beta_{k+1}-\beta_k) +\|\rho_{k+1}-\rho_k\|^2\\ \nonumber
 &\le& \kappa\| \rho_{k+1}-\rho_k + (\beta_{k+1}-\beta_k)/\kappa\|^2 \\ \nonumber
 &=& \kappa \alpha_k^2\| X_S^* X_S(\beta_k-\tilde{\beta}^*)\|^2 \nonumber
\end{eqnarray}
\begin{eqnarray}\nonumber
\Psi_{k+1} - \Psi_k &\le& -\frac{\alpha_k}{n}\left<X_S (\beta_k-\tilde{\beta}^*),X_S (\beta_k-\tilde{\beta}^*) \right> + \frac{\alpha_k^2 \kappa }{2n^2} \left<X_S^TX_S(\beta_k-\tilde{\beta}^*),X_S^TX_S(\beta_k-\tilde{\beta}^*)\right>\\ \nonumber
&=& -\frac{\alpha_k}{n} \left<X_S(\beta_k-\tilde{\beta}^*), (I - \kappa\alpha_k X_SX_S^\ast/2) X_S(\beta_k-\tilde{\beta}^*) \right>\\ \nonumber
&\le& -\frac{\alpha_k}{n} (1-\kappa \alpha_k\|X_SX_S^\ast \|/2)) \|X_S(\beta_k-\tilde{\beta}^*)\|^2\\ \nonumber
&\le& -\alpha_k\gamma (1-\kappa \alpha_k\|X_SX_S^\ast\|/2)) \| \beta_k - \tilde{\beta}^*\|^2 \\ \nonumber
& \le &  -\alpha_k\gamma (1-\kappa \alpha_k\|X_SX_S^\ast\|/2)) F^{-1}(\Psi_k)
\end{eqnarray}
which gives the result.
\end{proof}

Next we present a discrete stopping time bound from the inequality above.

\begin{lem}[Discrete Stopping Time Bounds] \label{lem:dstop}
Consider the LB
\[(\rho_{k+1} - \rho_k) + (\beta_{k+1} - \beta_k)/\kappa = - \alpha_k X^\ast_S X_S(\beta_k-\tilde{\beta}),\]
where $X^\ast_S X_S \geq \gamma I$ and $\alpha_k \leq \alpha$, for all $ k> 0$.

Define
\[\tilde{\tau}_{1} := \inf\left\{\sum_{t=0}^{k-1} \alpha_t: \sign(\beta_k)=\sign(\tilde{\beta}^*)\right\}\]
and
\[\tilde{\tau}_{2}(C):=\inf\left\{\sum_{t=0}^{k-1} \alpha_t: ||\beta_{k}-\tilde{\beta}^*||_{2} \leq C\sqrt{\frac{s\log{p}}{n}}\right\}.\]
Then the following bounds hold,
\[\tilde{\tau}_\infty \le \frac{4+2\log{s}}{\tilde{\gamma}\tilde{\beta}_{\min}}+\frac{1}{\kappa\tilde{\gamma}}\log(\frac{\|\tilde{\beta}^*\|_{2}}{\tilde{\beta}_{\min}}) +  3\alpha,\]

\[\tilde{\tau}_{2}(C) \leq \frac{4}{C\tilde{\gamma}}\sqrt{\frac{n}{\log p}}+\frac{1}{2\kappa\tilde{\gamma}}(1+\log{\frac{n\|\tilde{\beta}^*\|^{2}_{2}}{C^{2}s\log{p}}}) + 2\alpha,\]
where $\tilde{\gamma} = \gamma (1-\kappa\alpha\|X_SX_S^\ast\|/2)$.
\end{lem}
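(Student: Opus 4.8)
The plan is to mirror the continuous argument of Lemma \ref{lem:stopping}, replacing the integral $\int dx/F^{-1}(x)$ by a telescoping sum over the iterates and charging a bounded overshoot at the finitely many places where the discrete trajectory leaves a piece of $F$ or crosses the stopping threshold. The engine is the discrete generalized Bihari inequality (Lemma \ref{lem:dbihari}): since $\alpha_k \le \alpha$ implies $\tilde{\gamma} = \gamma(1 - \kappa\alpha\|X_S X_S^\ast\|/2) \le \gamma(1 - \kappa\alpha_k\|X_S X_S^\ast\|/2)$, it yields
\[\Psi_{k+1} - \Psi_k \le -\tilde{\gamma}\,\alpha_k\,F^{-1}(\Psi_k).\]
Hence $\{\Psi_k\}$ is nonincreasing, and wherever $F^{-1}(\Psi_k) > 0$ we may rearrange to
\[\tilde{\gamma}\,\alpha_k \le \frac{\Psi_k - \Psi_{k+1}}{F^{-1}(\Psi_k)} \le \int_{\Psi_{k+1}}^{\Psi_k} \frac{dx}{F^{-1}(x)},\]
the last step using that $F^{-1}$ is nondecreasing and $\Psi_{k+1}\le\Psi_k$.

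First I would split the iterations into consecutive blocks according to which piece of $F$ contains $\Psi_k$: for $\tilde{\tau}_1$ the three pieces of \eqref{eq:F} (separated by the breakpoints $\tilde{\beta}_{\min}^2$ and $s\tilde{\beta}_{\min}^2$, with the terminal threshold at $\tilde{\beta}_{\min}^2/(2\kappa)$), and for $\tilde{\tau}_2(C)$ the two pieces produced by the envelope $\max\{\tilde{F}^{-1}(\Psi),\, C^2 s\log p/n\}$ used in the proof of Lemma \ref{lem:stopping}. Because $\{\Psi_k\}$ is monotone, it visits each piece along one contiguous run of indices, so within each block the per-step inequality telescopes into exactly that block's integral. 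Summing the blocks reproduces the integrals evaluated in Lemma \ref{lem:stopping} verbatim, with $\gamma$ replaced by $\tilde{\gamma}$; this produces the main terms $\frac{4+2\log s}{\tilde{\gamma}\tilde{\beta}_{\min}} + \frac{1}{\kappa\tilde{\gamma}}\log\frac{\|\tilde{\beta}^*\|_2}{\tilde{\beta}_{\min}}$ and $\frac{4}{C\tilde{\gamma}}\sqrt{n/\log p} + \frac{1}{2\kappa\tilde{\gamma}}(1 + \log\frac{n\|\tilde{\beta}^*\|_2^2}{C^2 s\log p})$.

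Next I would account for the overshoot at the block boundaries. The integral bound is safe to apply to every step except the single step whose endpoints $\Psi_k,\Psi_{k+1}$ straddle a boundary: at an interior breakpoint that step cannot be cleanly attributed to one piece, and at the terminal step $\Psi_{k+1}$ drops below the stopping threshold into the region where $F^{-1}(x) = 2\kappa x$, so $\int dx/F^{-1}(x)$ would acquire an unbounded $\frac{1}{2\kappa}\log(1/\Psi_{k+1})$ contribution. For each such straddling step I would discard the Bihari estimate and use only the trivial bound $\alpha_k \le \alpha$. There are three boundaries for $\tilde{\tau}_1$ (two interior breakpoints plus the terminal overshoot), giving the additive $3\alpha$, and two for $\tilde{\tau}_2(C)$ (one envelope switch plus the terminal overshoot), giving $2\alpha$. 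Assembling the block integrals with these boundary penalties delivers the two stated bounds.

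The main obstacle is making the overshoot bookkeeping airtight: verifying that monotonicity of $\{\Psi_k\}$ forces each piece of $F$ to be entered exactly once, so that there are precisely the claimed numbers of straddling steps, and confirming that bounding those steps by $\alpha$ is both necessary (for the terminal step, to avoid the logarithmic blow-up of the integral below the threshold) and sufficient (for the interior steps, so that the remaining run in each block admits the clean comparison with $F^{-1}$ bounded below by its value on that piece). Once this structure is fixed, the arithmetic collapses to the continuous computation of Lemma \ref{lem:stopping} under the substitution $\gamma\mapsto\tilde{\gamma}$, and no further estimates are required.
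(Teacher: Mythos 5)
Your proposal is correct and follows essentially the same route as the paper: apply the discrete Bihari inequality of Lemma \ref{lem:dbihari} with $\alpha_k\le\alpha$ to get $\tilde{\gamma}\alpha_k\le(\Psi_k-\Psi_{k+1})/F^{-1}(\Psi_k)$, dominate each such step by the corresponding piece of the integral $\int dx/F^{-1}(x)$ (the paper does this via the elementary inequalities $\frac{L_k-L_{k+1}}{L_k}\le\log\frac{L_k}{L_{k+1}}$ and $\frac{\sqrt{L_k}-\sqrt{L_{k+1}}}{L_k}\le\frac{1}{\sqrt{L_{k+1}}}-\frac{1}{\sqrt{L_k}}$ with $L_k=F^{-1}(\Psi_k)$, which is exactly your monotone-comparison step written out), telescope within the monotone blocks delimited by $n_1,n_2,n_3$, and charge one step of at most $\alpha$ at each of the three (resp.\ two) block boundaries to obtain the $+3\alpha$ (resp.\ $+2\alpha$) terms.
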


\begin{rem}
Taking $\alpha\to 0$, it recovers the stopping time bounds in continuous case, Lemma \ref{lem:stopping}.
\end{rem}

\begin{proof}[Proof of Lemma \ref{lem:dstop}]
Consider
\[\Psi_k= D(\tilde{\beta},\beta_{k})+\frac{\|\beta_k-\tilde{\beta}^*\|^2}{2\kappa}.\]
For a uniform upper bound on step sizes $\alpha_t \leq \alpha$, by the discrete Bihari's inequality in Lemma \ref{lem:dbihari}
\[
\Psi_{k+1} - \Psi_k \le -\alpha_k\tilde{\gamma} F^{-1}(\Psi_k) \leq -\alpha_k\tilde{\gamma} \tilde{F}^{-1}(\Psi_k),
\]
where $\tilde{\gamma} = \gamma (1-\kappa \alpha\|XX^\ast\|/2)$ and $\tilde{F}(x) = \frac{x}{2\kappa}+2\sqrt{xs} \geq F(x)$, $\forall x>0$.

For $k$ such that $\Psi_k \geq 2\tilde{\beta}_{\min}+\tilde{\beta}_{\min}^2/2\kappa$, denote $L_k =  F^{-1}(\Psi_k)$, which is non-increasing. Define $t_{m} = \sum_{t=0}^{m-1} \alpha_t$. Let $n_1 = \sup\{n: L_n > s\tilde{\beta}_{\min}^2\}$, then
\[\tilde{\gamma}\alpha_k \le \frac{F(L_k) - F(L_{k+1})}{L_k},\]
then for $0\le k \le n_1-1$, 
\[ \frac{F(L_k) - F(L_{k+1})}{L_k} \le (\frac{\log L_k}{2\kappa} - 2\sqrt{\frac{s}{L_k}}) - (\frac{\log L_{k+1}}{2\kappa} - 2\sqrt{\frac{s}{L_{k+1}}}).\]
This is because of
\[\frac{L_k-L_{k+1}}{L_k} \le \log(\frac{L_k}{L_{k+1}}), \]
using $1-x \leq - \log x$ for $x\le 1$,
\[ \frac{\sqrt{L_k} - \sqrt{L_{k+1}}}{L_k}\le \frac{\sqrt{L_k} - \sqrt{L_{k+1}}}{\sqrt{L_k}\sqrt{L_{k+1}}} = \frac{1}{\sqrt{L_{k+1}}} - \frac{1}{\sqrt{L_k}}, \]
and
\begin{eqnarray}\nonumber
\tilde{\gamma}t_{n_1}
&\le & (\frac{\log L_0}{2\kappa} - 2\sqrt{\frac{s}{L_0}}) - (\frac{\log L_{n_1}}{2\kappa} - 2\sqrt{\frac{s}{L_{n_1}}})\\ \nonumber
&\le & (\frac{\log \|\tilde{\beta}^*\|^2}{2\kappa} - 2\sqrt{\frac{s}{\|\tilde{\beta}\|^2}}) - (\frac{\log s\tilde{\beta}_{\min}^2}{2\kappa} - 2\sqrt{\frac{s}{s\tilde{\beta}_{\min}^2}}). \nonumber
\end{eqnarray}
\\
Let $n_2 = \sup\{n: L_n > \tilde{\beta}_{min}^2\}$,
\[\tilde{\gamma}\alpha_k \le \frac{F(L_k) - F(L_{k+1})}{L_k}.\]
Then similarly, we have
\begin{eqnarray}\nonumber
\tilde{\gamma}(t_{n_2}-t_{n_1+1})
&\le & (\frac{1}{2\kappa}+\frac{2}{\tilde{\beta}_{\min}})(\log L_{n_1+1} - \log L_{n_2}) \\ \nonumber
&\le & (\frac{1}{2\kappa}+\frac{2}{\tilde{\beta}_{\min}})(\log s\tilde{\beta}_{\min}^2 - \log \tilde{\beta}_{\min}^2). \nonumber
\end{eqnarray}
Let $n_3 = \sup\{n: \Psi_n > \tilde{\beta}_{\min}^2/2\kappa\}$,
\begin{eqnarray}\nonumber
\tilde{\gamma}(t_{n_3}-t_{n_2+1})
&\le & \sum_{k = n_2+1}^{n_3-1} \frac{\Psi_{k}-\Psi_{k+1}}{\tilde{\beta}_{\min}^2}\\ \nonumber
&\le & \frac{\frac{\tilde{\beta}_{\min}^2}{2\kappa} + 2\tilde{\beta}_{\min}-\frac{\tilde{\beta}^2_{\min}}{2\kappa}}{\tilde{\beta}_{\min}^2} \\ \nonumber
& = & \frac{2}{\tilde{\beta}_{\min}}.\nonumber
\end{eqnarray}
To sum up, we have
\[\tilde{\tau}_1 \le t_{n_3+1} \le \frac{4+2\log{s}}{\tilde{\gamma}\tilde{\beta}_{\min}}+\frac{1}{\kappa\tilde{\gamma}}\log(\frac{\|\tilde{\beta}^*\|_{2}}{\tilde{\beta}_{\min}}) +  3\alpha.\]
Similarly, we have
\[\tilde{\tau}_{2}(C) \leq \frac{4}{C\tilde{\gamma}}\sqrt{\frac{n}{\log d}}+\frac{1}{2\kappa\tilde{\gamma}}(1+\log{\frac{n\|\tilde{\beta}^*\|^{2}_{2}}{C^{2}s^{2}\log{d}}}) + 2\alpha,\]
which ends the proof.
\end{proof}

\begin{proof}[Proof of Theorem \ref{thm:LB}]

The proof is the same to the continue case. The only difference is the decreasing of $\|X(\beta_k-\tilde{\beta}^*)\|_2$ needs the condition $\kappa \alpha\|X_SX_S^\ast\|<2.$

Consider the LB
\[(\rho_{k+1} - \rho_k) + (\beta_{k+1} - \beta_k)/\kappa = - \alpha_k X^\ast_S X_S(\beta_k-\tilde{\beta}^*),\]
where $X^\ast_S X_S \geq \gamma I$.
\begin{eqnarray}\nonumber
&&\|X_S(\beta_{k+1}-\tilde{\beta}^*)\|^2 - \|X_S(\beta_{k}-\tilde{\beta}^*)\|^2 \\\nonumber
&=& \|X_S(\beta_{k+1}-\beta_k)\|^2 + 2(\beta_{k+1}-\beta_k)^TX_S^TX_S(\beta_{k}-\tilde{\beta}^*)\\\nonumber
&=& \|X_S(\beta_{k+1}-\beta_k)\|^2 - 2n/\alpha_k(\beta_{k+1}-\beta_k)^T [(\rho_{k+1} - \rho_k) + (\beta_{k+1} - \beta_k)/\kappa]     \\\nonumber
 &\le&\|X_S(\beta_{k+1}-\beta_k)\|^2 - 2n/\alpha_k(\beta_{k+1}-\beta_k)^T (\beta_{k+1} - \beta_k)/\kappa     \\\nonumber
 &=& n(\beta_{k+1}-\beta_k)^T (X_S^*X_S-2/\alpha_K\kappa) (\beta_{k+1} - \beta_k) \\\nonumber
 &\le& 0,
\end{eqnarray}
where we have used $\|X_SX_S^\ast\| = \|X_S^\ast X_S\|$. Hence $\|X_S(\tilde{\beta}_S^*-\beta_k)\|_2$ is monotonically nonincreasing.
\end{proof}

Note that this implies that $\|r_t\| :=\|y-X\beta_t\|$ is monotonically nonincreasing for all $t \in (0,\bar{\tau})$. The following lemma makes it precise.

\begin{lem}\label{lem:residue}
For $t\in [0,\bar{\tau}]$, the residue admits an orthogonal decomposition
\[ \|r_t\|^2 =\|y-X\beta_t\|^2 = \| X_S(\tilde{\beta}^*_S -\beta_S(t))  \|^2 + \| P_T \varepsilon\|^2 \]
and is monotonically nonincreasing.
\end{lem}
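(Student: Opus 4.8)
The plan is to first use no-false-positivity to reduce the residual to a quantity supported entirely on $S$, then perform an orthogonal splitting of the noise, and finally read off monotonicity from the Lyapunov computation already carried out in the proofs of Theorems \ref{thm:LB-iss} and \ref{thm:LB}. First I would invoke the no-false-positive conclusion (part A of Theorem \ref{thm:LB-iss}, or Theorem \ref{thm:Bregman ISS} in the $\kappa\to\infty$ limit), which guarantees $\supp(\beta_t)\subseteq S$ for all $t\le\bar\tau$, so that $X\beta_t=X_S\beta_S(t)$. Writing $y=X_S\beta^*_S+\epsilon$, the residual then becomes
\[ r_t = X_S(\beta^*_S-\beta_S(t)) + \epsilon. \]

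Next I would split $\epsilon=P_S\epsilon+P_T\epsilon$, where $P_S=X_S(X_S^T X_S)^{-1}X_S^T$ is the orthogonal projection onto $\im(X_S)$ and $P_T=I-P_S$ is the complementary projection onto $\im(X_S)^\perp=\ker(X_S^T)$ (this is exactly the $P_T=I-X_S^\dagger X_S^\ast$ of Lemma \ref{eq:sign}). The key identity is that the in-space part of the noise is precisely the oracle perturbation: since $\tilde\beta^*_S-\beta^*_S=(X_S^T X_S)^{-1}X_S^T\epsilon$ by \eqref{eq:oracle}, one has $P_S\epsilon=X_S(\tilde\beta^*_S-\beta^*_S)$. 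Substituting,
\[ r_t = X_S(\tilde\beta^*_S-\beta_S(t)) + P_T\epsilon. \]
Here the first term lies in $\im(X_S)$ and the second in $\ker(X_S^T)$, so the two are orthogonal, and the Pythagorean identity yields the claimed decomposition $\|r_t\|^2=\|X_S(\tilde\beta^*_S-\beta_S(t))\|^2+\|P_T\epsilon\|^2$.

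Finally, for monotonicity I would observe that $\|P_T\epsilon\|^2$ is independent of $t$, so it suffices that $\|X_S(\tilde\beta^*_S-\beta_S(t))\|^2$ be nonincreasing. This is exactly the Lyapunov computation appearing in the proof of Theorem \ref{thm:LB-iss}, where differentiation gives
\[ \frac{d}{dt}\frac{\|X_S(\tilde\beta^*_S-\beta_S)\|_2^2}{2n}=-\frac{1}{\kappa}\left\|\dot\beta_S\right\|_2^2\le 0, \]
using the complementarity relation $\langle\dot\rho_S,\dot\beta_S\rangle=0$ valid while the path evolves on $S$; the discrete analogue is established in the proof of Theorem \ref{thm:LB} under the step-size condition $\kappa\alpha\|X_SX_S^\ast\|<2$. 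Since a positive constant factor does not affect monotonicity, $\|X_S(\tilde\beta^*_S-\beta_S(t))\|^2$ is nonincreasing on $[0,\bar\tau]$, and hence so is $\|r_t\|^2$.

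I expect the only genuinely substantive points to be the identification $P_S\epsilon=X_S(\tilde\beta^*_S-\beta^*_S)$ together with the orthogonality that makes the cross term vanish; everything else is bookkeeping, and the monotonicity is imported wholesale from the earlier Lyapunov estimates. Thus the main ``obstacle'' is really just recognizing that the no-false-positivity hypothesis for $t\le\bar\tau$ is what licenses restricting the residual to the $X_S$ block in the first place.
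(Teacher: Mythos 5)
Your proposal is correct and follows essentially the same route as the paper: the paper also splits $r_t = X_S(\beta^*-\beta_t)+\epsilon$ via the projection $P_S$ onto $\im(X_S)$, identifies $P_S\epsilon = X_S(\tilde\beta^*_S-\beta^*_S)$ to obtain the Pythagorean decomposition, and imports monotonicity of $\|X_S(\tilde\beta^*_S-\beta_S(t))\|$ from the Lyapunov computation in the proofs of Theorems \ref{thm:LB-iss} and \ref{thm:LB}. Your version is slightly more explicit about the no-false-positivity hypothesis that justifies restricting to the $X_S$ block, which the paper leaves implicit.
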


\begin{proof} By Pythagorean Theorem,
\begin{eqnarray*}
\|r_t\|^2 & = &  \|X_S(\beta^*-\beta_t) + \varepsilon\|^2 = \|P_S X_S(\beta^* -\beta_t) + P_S \varepsilon\|^2+ \| (I-P_S) \varepsilon\|^2 \\
& = &  \| X_S(\beta^* -\beta_t) + X_S(X_S^* X_S)^{-1} X_S^* \varepsilon\|^2+ C_{\epsilon,S} \\
& = & \| X_S(\tilde{\beta}^*_S -\beta_S(t))  \|^2  + C_{\epsilon,S}
\end{eqnarray*}
and the conclusion follows from that $\| X_S(\tilde{\beta}_S^* -\beta_S(t))  \|$ is monotonically nonincreasing.
\end{proof}

\section*{Acknowledgements}
We thank Dr. Ming Yan for helps on fast Matlab codes for computing Bregman ISS paths. The research of Stanley Osher was supported in part by NSF grant 1118971 and ONR grant N000141210838. The research of Yuan Yao was supported in part by National Basic Research Program of China under grant 2012CB825501 and 2015CB856000, as well as NSFC grant 61071157 and 11421110001. The research of Wotao Yin was supported in part by NSF grants DMS-1349855 and DMS-1317602 and ARO MURI grant W911NF-09-1-0383.

\begin{supplement}
\sname{Supplement A}
\label{suppA}
\stitle{Matlab Linearized Bregman codes}
\slink[url]{{http://www.math.ucla.edu/\~{}wotaoyin/software.html}}
\end{supplement}

\begin{supplement}
\sname{Supplement B}
\label{suppB}
\stitle{R Package of Linearized Bregman algorithms}
\slink[url]{https://cran.r-project.org/web/packages/Libra/index.html}
\end{supplement}

\bibliographystyle{amsalpha}
\bibliography{../bib/YY_Endnote,../bib/WY_Mendeley}

\end{document}